\newtheorem{thm}{Theorem}[section]
\newtheorem{prop}[thm]{Proposition}
\newtheorem{lem}[thm]{Lemma}
\newtheorem{cor}[thm]{Corollary}
\theoremstyle{definition}
\newtheorem{dfn}[thm]{Definition}
\newtheorem{ex}[thm]{Example}
\newtheorem*{ex*}{Example}
\newtheorem{rmk}[thm]{Remark}
\DeclareMathOperator{\Aut}{Aut}
\DeclareMathOperator{\Epi}{Epi}
\DeclareMathOperator{\Stab}{Stab}
\DeclareMathOperator{\Ker}{Ker}
\DeclareMathOperator{\Sch}{Sch}
\DeclareMathOperator{\spec}{sp}
\DeclareMathOperator{\supp}{supp}
\newcommand{\N}{\mathbb{N}}
\newcommand{\Z}{\mathbb{Z}}
\newcommand{\R}{\mathbb{R}}
\newcommand{\C}{\mathbb{C}}
\newcommand{\F}{\mathcal{F}}
\newcommand{\CH}{\mathcal{H}}
\newcommand{\M}{\tilde{M}}
\newcommand{\elln}{{\ell^2_n}}
\newcommand{\ellnn}{{\ell^2_{n+1}}}
\newcommand{\ella}{\ell^2_a}
\newcommand{\ellan}{\ell^2_{a, n}}
\newcommand{\ellann}{\ell^2_{a, n+1}}
\newcommand{\Phiin}{\Phi^i_n}
\newcommand{\norm}[1]{\left\lVert#1\right\rVert}
\newcommand{\normell}[1]{\left\lVert#1\right\rVert_{\ell^2}}
\newcommand{\normellnn}[1]{\left\lVert#1\right\rVert_{\ellnn}}
\title{On spectra and spectral measures of Schreier and Cayley graphs}
\author{Rostislav Grigorchuk, Tatiana Nagnibeda, Aitor Pérez}
\date{}
\begin{document}

\begin{abstract}
We are interested in various aspects of spectral rigidity of Cayley and Schreier graphs of finitely generated groups. For each pair of integers $d\geq 2$ and $m \ge 1$, we consider an uncountable family of groups of automorphisms of the rooted $d$-regular tree which provide examples of the following interesting phenomena. For $d=2$ and any $m\geq 2$, we get an uncountable family of non quasi-isometric Cayley graphs with the same Laplacian spectrum, absolutely continuous on the union of two intervals, that we compute explicitly. Some of the groups provide examples where the spectrum of the Cayley graph is connected for one generating set and has a gap for another.

For each $d\geq 3, m\geq 1$, we exhibit infinite Schreier graphs of these groups with the spectrum a Cantor set of Lebesgue measure zero union a countable set of isolated points accumulating on it. The Kesten spectral measures of the Laplacian on these Schreier graphs are discrete and concentrated on the isolated points. We construct moreover a complete system of eigenfunctions which are strongly localized. 
\end{abstract}

\maketitle

\section{Introduction}
\label{sec:introduction}

Cayley graphs and, more generally, Schreier graphs of finitely generated groups constitute an important class of examples in spectral graph theory. At the same time, the study of the Laplacian spectrum and spectral measures occupies a significant place in the theory of random walks on groups and more generally in geometric group theory. It is particularly interesting to understand how the spectra and spectral measures on Cayley and Schreier graphs depend on the algebraic structure and on the geometry of the group. There are also some natural rigidity questions, for instance, whether the spectrum determines the group in some way or, as formulated by Alain Valette \cite{Val}, ``Can one hear the shape of a group?". Another interesting question is how the spectrum depends on the chosen generating set or on the choice of weights on the generators. The spectral computations are notoriously difficult, and very few examples are known of infinite graphs, or of infinite families of finite graphs, where the spectrum has been explicitly computed. The qualitative results are also scarce. It follows from deep results in K-theory that the spectra are intervals for some classes of finitely generated torsion-free groups  \cite{HigKas}, and this is conjectured to be the case for all such groups. For groups that do contain a nontrivial element of finite order, the list of known shapes of spectra of Cayley graphs is very short: an interval, a union of an interval with one or two isolated points, a union of two disjoint intervals and one or two isolated points (free products of two finite cyclic groups \cite{CS86}), two disjoint intervals (Grigorchuk's group \cite{DG18}). A union of any finite number of disjoint intervals and one or two isolated points in the spectrum can appear in the case of anisotropic Laplacians on free products of several finite cyclic groups \cite{Kuh92}. Infinitely many gaps may appear in the spectrum of an anisotropic Laplacian on a lamplighter group~\cite{GS19}. The first examples of Schreier graphs whose spectrum is a Cantor set of Lebesgue measure zero or a union of such Cantor set and a countable set of isolated points accumulating on it were obtained in \cite{BG00}, and it is still open whether Cantor spectrum can occur on a Cayley graph. Even less is known about the spectral measure type. Lamplighter groups remain the only family of examples for which the spectral measure has been shown to be purely discrete (see~\cite{GriZ} for the original result on the lamplighter over $\Z$, and~\cite{LNW08} for a generalization to lamplighters with arbitrary bases).
Anisotropic Laplacians on lamplighters may have nontrivial singular continuous part~\cite{GV15}. An example of a Schreier graph of a self-similar group (the Hanoi towers group) with a nontrivial singular continuous part in the spectral measure appeared in~\cite{Qui07}. Examples of Schreier graphs with purely singular continuous spectra for anisotropic Laplacians were provided in \cite{GLN16, GLNS19}.

In this paper we study the spectra of Laplacians associated with certain self-similar group actions on rooted trees. Let $X = \{0, \dots, d-1\}$. The set of vertices of the tree $T_d$ is naturally identified with the set $X^*$ of finite words on $X$. Similarly, the boundary of the $d$-regular rooted tree $\partial T_d$ is in bijection with the set $X^\N$ of infinite sequences of elements of $X$. We shall alternatively use the notation $\partial T_d$, usually omitting the index $d$, or $X^\N$ to denote it. Given a finitely generated group $G$ acting by automorphisms on $T_d$, equipped with a natural finite set of generators $S$, we get a sequence of finite graphs $\{\Gamma_n\}_n$ (Schreier graphs of the action on finite levels of the tree) and a family $\{\Gamma_\xi\}_{\xi\in\partial T_d}$ of infinite Schreier graphs corresponding to the orbits of the action of $G$ by homeomorphisms on the boundary $\partial T_d$ of the tree. We will consider the normalized adjacency, or Markov, operator on the Cayley graph $\displaystyle M = \frac{1}{|S|}\sum_{s\in S} s :\ell^2(G)\rightarrow\ell^2(G)$, as well as its projections on the finite and infinite Schreier graphs: $M_n:\ell^2(G/H_n)\rightarrow\ell^2(G/H_n)$, where $H_n$ is the stabilizer subgroup of a vertex on the $n$-th level of the tree, and  $M_\xi: \ell^2(G/H_\xi)\rightarrow\ell^2(G/H_\xi)$, with $H_\xi$ the stabilizer of a point $\xi$ in the boundary $\partial T_d$. When a generating set is fixed we will often write $\spec(G)$ for the spectrum of the operator M on the Cayley graph.

The groups that we consider, the so-called spinal groups with the cyclic action at the root, are organized in uncountable families $\{G_\omega\}_{\omega\in\Omega_{d,m}}$. Here $d\geq 2$ denotes the degree of the regular rooted tree on which the group acts, $m\geq 1$ is an integer, and the groups in the family corresponding to a given pair $d,m$ are indexed by sequences in the alphabet consisting of all epimorphisms $(\Z/\Z_d)^m\rightarrow \Z/\Z_d$. They come with a natural choice of a set of generators that we call spinal generating set. See Section~\ref{sec:preliminaries} for the definition of spinal groups. In the case of $d=2$ the Schreier graphs  $\Gamma_\xi$ of spinal groups are infinite lines with some multiple edges and loops,  which makes the spectral analysis easier in this case. This is not the case anymore for $d\geq 3$,  and it turns out that the spectral properties of the operators $M_\xi$ are very different for  $d\geq 3$ as compared to $d=2$. All spinal groups with $d=2$ are of intermediate growth. For $d \ge 3$, this is known for some but not all of them (\cite{bart_pochon, bart_sunik_spinal, Fra19}). All spinal groups are amenable~\cite{JNS}. Hence, all their Schreier graphs are also amenable, and consequently, the spectrum of the operator $M_\xi$ does not depend of $\xi$~\cite{BG00}.

Below we prove that, in the case of $d=2$, the spectrum of the infinite Schreier graph $\Gamma_\xi$, denoted by $\spec(M_\xi)$, is a union of two intervals. The spectral measure, that we compute explicitly, is absolutely continuous with respect to the Lebesgue measure.  More interestingly, it  happens that for $d=2$ the spectrum $\spec(M_\xi)$ of $\Gamma_\xi$ coincides with the spectrum $\spec(M)$ of the Cayley graph of the group. We show that it is independent on $\omega\in \Omega_{d,m}$, and hence we obtain a negative answer to the question ``Can one hear the shape of a group?" by providing uncountable families of isospectral groups (see also \cite{DG18}). The groups $G_\omega$ with $d=2$ and $m=2$ all have nonequivalent growth functions~\cite{Gri84}, so there are uncountably many non quasi-isometric isospectral groups.


For spinal groups acting on the binary tree, we also investigate the dependence of the spectrum on the generating set. As mentioned above, the spectrum of both Schreier and Cayley graphs with respect to the spinal generating set is a union of two intervals. In the same time, there always exists a minimal generating set with the spectrum of the corresponding Schreier graph a Cantor set. For a certain subfamily of spinal groups acting on the binary tree that we denote $\{G_m\}$ (one group for each $m \ge 2$) there is also a minimal generating set with the spectrum of the corresponding Schreier graph an interval. For $m = 2, 3$ these examples provide groups with spectrum of the Cayley graphs an interval for one generating set and a union of two disjoint intervals for another generating set.

For $d\geq 3$ we also compute the spectrum of $\Gamma_\xi$. It is is a Cantor set of Lebesgue measure zero plus a countable set of points accumulating on it. The computations are inspired by the work of Bartholdi and Grigorchuk for one of these groups in \cite{BG00}. We also extend their computation of the empirical spectral measure, or density of states. Moreover, we go further and study the spectral measures for the operators $M_\xi$. Note that while the spectrum of $M_\xi$ doesn't depend on $\xi$, the spectral measures a priori do. We prove that for all $\xi$ in a certain (explicitly given) subset of full measure in $\partial T_d$, all the spectral measures of $M_\xi$ are discrete and concentrated on the set of isolated points in the spectrum. Moreover, we provide a complete system of eigenfunctions of $M_\xi$ and show that they are all of finite support.

Our main results are the following.

\begin{restatable}{thm}{spectrumcayleydtwo}
	\label{thm:spectrum_cayley_d2}
	Let $G$ be a spinal group with $d=2$ and $m\ge 2$. Then, for any $\xi \in X^\N$,
	\begin{equation}
	\spec(G) = \spec(M_\xi) = \left[- \frac{1}{2^{m-1}}, 0\right] \cup \left[1 - \frac{1}{2^{m-1}}, 1\right].
	\end{equation}
	
	Notice that, as $m \to \infty$, these spectra shrink from two intervals to two points.
\end{restatable}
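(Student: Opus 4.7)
My approach has two parts: first compute $\spec(M_\xi)$ directly, then use the amenability of $G$ to deduce $\spec(G)=\spec(M_\xi)$.

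For the Markov operator $M_\xi$, recall from the introduction that for $d=2$ the Schreier graph $\Gamma_\xi$ is a half-line (or line) with loops and multi-edges, so that $M_\xi$ is a bounded self-adjoint tridiagonal (Jacobi-type) operator on $\ell^2(\N)$. My first step is a combinatorial analysis of the action of the spinal generating set (of size $|S|=2^m$) on the orbit of $\xi$: I would show that, independently of $\omega$ and of the vertex, every spine vertex is incident to exactly $2^{m-1}-1$ loops and $2^{m-1}+1$ non-loop half-edges. Setting $c := \tfrac{1}{2} - \tfrac{1}{2^m}$, this yields the decomposition
$$M_\xi \;=\; c\,I \;+\; T,$$
where $T$ is a purely off-diagonal Jacobi operator. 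The bipartite structure of the spine gives a unitary involution $U\colon f\mapsto (-1)^n f$ with $UTU^{-1}=-T$, so $\spec(T)$ is symmetric around $0$.

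It then suffices to prove
$$\spec(T^2) \;=\; \left[\left(\tfrac{1}{2}-\tfrac{1}{2^m}\right)^{2},\;\left(\tfrac{1}{2}+\tfrac{1}{2^m}\right)^{2}\right],$$
since this gives $\spec(T)=\pm\bigl[\tfrac{1}{2}-\tfrac{1}{2^m},\,\tfrac{1}{2}+\tfrac{1}{2^m}\bigr]$ and, after adding $c$, the two claimed intervals $[-\tfrac{1}{2^{m-1}},0]\cup[1-\tfrac{1}{2^{m-1}},1]$. I would identify $\spec(T^2)$ by exploiting the self-similarity of $\Gamma_\xi$: restricting to one class of the bipartition produces a rescaled copy of the Schreier graph of the shifted sequence $\sigma\omega$, which yields a renormalisation identity for $T^2$ (equivalently a Schur-complement reduction). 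Combined with the trivial upper bound $\|T\|\leq \tfrac{1}{2}+\tfrac{1}{2^m}$ coming from the row sums, this should force $\spec(T^2)$ to be exactly the claimed interval, uniformly in $\omega$.

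Finally, the equality $\spec(G)=\spec(M_\xi)$ follows from the amenability of spinal groups, recalled in the introduction. The inclusion $\spec(M_\xi)\subseteq\spec(G)$ is automatic because $M_\xi$ is a compression of $M$ to the invariant subspace $\ell^2(G/H_\xi)$, and the reverse inclusion comes from amenability, which ensures that the quasi-regular representation on $\ell^2(G/H_\xi)$ and the left-regular representation of $G$ are weakly equivalent, hence yield Markov operators with identical spectrum. The main obstacle is the identification of $\spec(T^2)$ as a full interval (rather than a smaller set) uniformly in $\omega$; this is precisely where the one-dimensionality of the spine ($d=2$) is essential, since for $d\geq 3$ the analogous renormalisation produces a Cantor set, as discussed in the introduction. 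I expect the cleanest route to be an explicit substitution scheme on the self-similar Jacobi coefficients of $T^2$.
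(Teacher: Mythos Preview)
Your computation of $\spec(M_\xi)$ is along different lines from the paper (which obtains it as a limit of finite spectra via Schur complements on the matrices $Q_n(\lambda,\mu)$), but your direct Jacobi--operator approach is viable and in fact simpler than you indicate: once you have established that the loop count is $2^{m-1}-1$ at every vertex, the off--diagonal part $T$ is not merely self--similar but genuinely $2$--periodic, with alternating hopping weights $\tfrac12$ and $\tfrac{1}{2^m}$ independent of $\omega$. Its spectrum is then $\pm\bigl[\tfrac12-\tfrac{1}{2^m},\,\tfrac12+\tfrac{1}{2^m}\bigr]$ by standard Floquet--Bloch, no renormalisation needed.

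The serious gap is in the second part. Amenability of $G$ does \emph{not} give weak equivalence of $\lambda_G$ and $\lambda_{G/H_\xi}$; Hulanicki's theorem only says that every unitary representation of an amenable group is weakly contained in the regular one, hence $\lambda_{G/H_\xi}\prec\lambda_G$ and $\spec(M_\xi)\subset\spec(G)$. The reverse weak containment $\lambda_G\prec\lambda_{G/H_\xi}$ is not a consequence of amenability of $G$ and is in general false (it would require, roughly, amenability of $H_\xi$, which is not what you have). So the inclusion $\spec(G)\subset\spec(M_\xi)$ needs an independent argument.

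The paper's idea for this direction is the crucial one you are missing: one observes that the element
\[
t \;=\; \frac{2}{|B|}\sum_{b\in B} b \;-\; 1 \;\in\; \C[G]
\]
satisfies $t^2=1$, so that $D=\langle a,t\rangle$ is a (infinite) dihedral group inside $\C[G]$, and moreover $M=\tfrac{a}{2^m}+\tfrac{t}{2}+\tfrac{2^{m-1}-1}{2^m}\in\C[D]$. Since $D$ is amenable, Hulanicki applied to $D$ gives $\lambda_G|_D\prec\lambda_D$, hence $\spec(G)=\spec(\lambda_G(M))\subset\spec(\lambda_D(M))$. The right--hand side is then computed by the same $2$--periodic Floquet--Bloch calculation as above and yields exactly the two intervals. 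Without something playing the role of this dihedral reduction, your argument for the upper bound on $\spec(G)$ does not go through.
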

\begin{restatable}{cor}{regularspectrumdtwo}
	\label{cor:regular_spectrum_d2}
	(see also~\cite{GD17}).
	There are uncountably many pairwise non quasi-isometric isospectral groups.
\end{restatable}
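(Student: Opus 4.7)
The plan is to fix $m=2$ and specialize Theorem~\ref{thm:spectrum_cayley_d2} to the uncountable family $\{G_\omega\}_{\omega\in\Omega_{2,2}}$. By that theorem, every group in this family has Cayley graph spectrum equal to $[-\tfrac{1}{2},0]\cup[\tfrac{1}{2},1]$ with respect to its spinal generating set, so the whole family is isospectral. It remains to extract an uncountable subfamily whose members are pairwise non quasi-isometric.

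For this, I would invoke the classical result of Grigorchuk~\cite{Gri84}, which shows that for $d=2, m=2$ the groups $G_\omega$ have pairwise non-equivalent word-growth functions as $\omega$ ranges over an uncountable subset of $\Omega_{2,2}$ (in fact, one can choose a subset of the continuum from which the growth functions are totally ordered and all distinct). Since quasi-isometries between finitely generated groups preserve the equivalence class of the growth function, non-equivalent growth immediately implies non quasi-isometric.

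Combining these two ingredients yields an uncountable collection of pairwise non quasi-isometric groups, all of them isospectral with respect to their spinal generating sets, which is exactly the statement of the corollary.

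No serious obstacle arises: the theorem supplies isospectrality for free, and the non quasi-isometry follows from a quoted growth-type invariant. The only subtlety is choosing $m=2$ rather than an arbitrary $m$, because the relevant growth dichotomy from~\cite{Gri84} is formulated in that case; any other $m\ge 2$ for which an analogous uncountable growth separation is available would work just as well.
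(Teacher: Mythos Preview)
Your proposal is correct and follows essentially the same argument as the paper: fix $m=2$, use Theorem~\ref{thm:spectrum_cayley_d2} to get isospectrality across the whole family $\{G_\omega\}_{\omega\in\Omega_{2,2}}$, and then invoke Grigorchuk's result~\cite{Gri84} that this family contains uncountably many groups with pairwise inequivalent growth functions, hence pairwise non quasi-isometric.
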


In the same time, we are able to find a different, minimal generating set, with the spectrum of the Schreier graph a Cantor set.

\begin{restatable}{cor}{generatingsetcantor}
	\label{cor:generating-set-cantor}
	
	For every spinal group $G_\omega$ with $d = 2$, $m \ge 2$ and $\omega \in \Omega_{d, m}$ there exists a minimal generating set $T \subset S$ for which $\spec(M_\xi^T)$ is a Cantor set of Lebesgue measure zero.
\end{restatable}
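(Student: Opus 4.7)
The plan is to exhibit a concrete minimal generating set $T \subset S$ by discarding redundant $b$-generators, and then to apply a Schur-complement renormalization in the spirit of \cite{BG00} to identify $\spec(M_\xi^T)$ with the Julia set of an explicit rational map.

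The spinal generating set has the form $S = \{a\} \cup B$, where $|B| = 2^m - 1$ and the elements of $B$ pairwise commute as involutions, generating an elementary abelian subgroup $\langle B \rangle \cong (\Z/2\Z)^m$. Since the abelianization of $G_\omega$ is isomorphic to $(\Z/2\Z)^{m+1}$, any generating set of $G_\omega$ has at least $m+1$ elements. I would choose $T_B \subset B$ of size $m$ forming a $\Z/2\Z$-basis of $\langle B \rangle$ and set $T := \{a\} \cup T_B$; by construction, $T$ is a minimal generating set of size $m+1$ contained in $S$.

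Next, I would set up a renormalization for $M_\xi^T$. Each $t \in T_B$ admits a wreath-product decomposition $t = (t^{(0)}, t^{(1)})$ dictated by the shifted sequence $\sigma\omega$, in which one of $t^{(0)}, t^{(1)}$ is either trivial or another spinal generator. Applying a Schur complement to $M_n^T - \lambda I$ with respect to the splitting of $\Gamma_n^T$ into the two level-$1$ subtrees should yield a recursive identity
\[
\det(M_n^T - \lambda I) \;=\; p_n(\lambda) \cdot \det\bigl(M_{n-1}^{T'} - F(\lambda) I\bigr)
\]
for scalar polynomials $p_n$, a renormalized generating set $T'$ at level $n-1$, and a rational map $F$ depending on $\omega$ and $T_B$. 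Amenability of $G_\omega$ gives $\spec(M_\xi^T) = \overline{\bigcup_n \spec(M_n^T)}$, and iterating the recursion realizes this union, up to a countable set of exceptional points, as the backward orbit of a finite set under $F$.

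The main obstacle, and the bulk of the work, is to verify that $F$ is hyperbolic on its Julia set for every $\omega \in \Omega_{2,m}$. The spinal structure should keep the combinatorial type of $F$ essentially constant along the $\omega$-family, so I expect to show by direct computation that the critical points of $F$ lie outside a real interval containing the spectrum and escape under iteration to an attracting fixed point. Standard complex-dynamics arguments then imply that the Julia set of $F$ is a Cantor set of Lebesgue measure zero, and hence so is $\spec(M_\xi^T)$.
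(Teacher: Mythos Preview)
Your proposal has two substantive gaps, and the paper's argument goes in a quite different direction.

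First, the choice of $T_B$ is not innocent. You write ``I would choose $T_B \subset B$ of size $m$ forming a $\Z/2\Z$-basis'', but the conclusion is false for a generic such choice. For the groups $G_m$ in \v{S}uni\'c's family (Section~\ref{sec:dependence_gen_set}), the natural minimal basis $T=\{a,b_1,\dots,b_m\}$ gives $\spec(M_\xi^T)$ equal to a single interval, not a Cantor set. The paper's proof constructs the basis with care: it picks two epimorphisms $\pi,\pi'$ occurring infinitely often in $\omega$, builds generators $x_1,\dots,x_{m-2}$ of $\Ker(\pi)\cap\Ker(\pi')$, then adds $y\in\Ker(\pi)\setminus\Ker(\pi')$ and $yy'$ with $y'\in\Ker(\pi')\setminus\Ker(\pi)$, precisely so that the quantities $q_\pi=|T\cap B\setminus\Ker(\pi)|$ take two distinct values $1$ and $2$.

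Second, your renormalization scheme cannot produce a \emph{single} rational map $F$. The miracle in Lemma~\ref{lem:adjacency_matrix} is that $\sum_{b\in B\setminus\{1\}} b_{n,k}$ is independent of $\omega_k$ because every epimorphism has kernel of the same size; once you pass to a proper subset $T_B\subset B$, the block $\sum_{t\in T_B}\omega_k(t)_{n-1}$ genuinely depends on $\omega_k$. The Schur complement then yields a \emph{sequence} of maps $F_{\omega_0},F_{\omega_1},\dots$, and for non-periodic $\omega$ this is a non-autonomous system to which the Julia-set hyperbolicity argument you sketch does not apply.

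The paper's route avoids renormalization entirely. It exploits that for $d=2$ every $\Gamma_\xi^T$ is a line, so the simple random walk is a nearest-neighbour walk on $\Z$ with edge weights $q_{\omega_i}/|T|$ arranged in a Toeplitz-type pattern. Proposition~\ref{prop:type_spectrum} then invokes \cite{GLNS19}: if the weights are not eventually periodic (equivalently, the $q_\pi$ are not all equal over the $\pi$ appearing infinitely often in $\omega$), the spectrum is a Cantor set of Lebesgue measure zero. The specific basis above is engineered exactly to trigger this aperiodicity criterion.
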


For two specific examples further analysis shows that the spectrum on the Cayley graph may have a gap or be connected, depending on the generating set. See Section 7 for the definition of \v{S}uni\'c's family of self-similar spinal groups and of the subfamily $\{G_m\}_{m \ge 2}$ acting on the binary tree.

\begin{restatable}{cor}{spectrumgego}
	\label{cor:spectrum-ge-go}
	For the Grigorchuk-Erschler group $G_2$ and Grigorchuk's overgroup $G_3$ the spectrum of the Cayley graph is a union of two disjoint intervals with respect to the spinal generating set and the interval $[-1, 1]$ with respect to the minimal \v{S}uni\'c generating set.
\end{restatable}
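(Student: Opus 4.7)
The first assertion is immediate from Theorem~\ref{thm:spectrum_cayley_d2}: specializing to $m=2$ and $m=3$ yields that the Cayley graph of $G_m$ with respect to the spinal generating set $S$ has spectrum $[-1/2,0]\cup[1/2,1]$ and $[-1/4,0]\cup[3/4,1]$ respectively, each a disjoint union of two intervals.

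For the second assertion, let $T\subset S$ denote the minimal \v{S}uni\'c generating set introduced in Section~7, and write $M^T = |T|^{-1}\sum_{s\in T} s$. Since $T\subset S$ consists of involutions, every closed word in $T$ has even length, so the Cayley graph $\Cay(G_m,T)$ is bipartite and $\spec(M^T)$ is symmetric about $0$. Amenability of all spinal groups \cite{JNS} forces $1\in\spec(M^T)$, and hence $-1\in\spec(M^T)$ by the symmetry. Combined with the trivial bound $\spec(M^T)\subseteq[-1,1]$, this yields the soft inclusions $\{-1,1\}\subseteq\spec(M^T)\subseteq[-1,1]$.

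The main step is to upgrade this to the full equality $\spec(M^T)=[-1,1]$. My plan is to apply the Schur-complement renormalization of Bartholdi--Grigorchuk \cite{BG00} to the pair $(G_m,T)$: embed $M^T$ into a pencil of operators in which the level-$1$ self-similar decomposition of the generators in $T$ closes up, and extract from it a polynomial renormalization map $F$ so that $\spec(M_n^T) = F^{-1}\bigl(\spec(M_{n-1}^T)\bigr)$. By amenability, $\spec(M^T)=\overline{\bigcup_n \spec(M_n^T)}$, so it suffices to show that the iterated $F$-pre-images of the level-$1$ seed are dense in $[-1,1]$. I expect $F$ to be conjugate to a Chebyshev-type map on $[-1,1]$ --- in sharp contrast with the spinal case, where the analogous renormalization admits an invariant set producing the two-interval spectrum of Theorem~\ref{thm:spectrum_cayley_d2} --- and density of the pre-image orbits then follows from a standard expanding-dynamics argument.

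The main obstacle will be the algebra behind $F$. The generators in $T$ are combinations of several individual spinal generators, so their wreath-recursion on level $1$ mixes several coordinates at once and the spectral pencil requires more auxiliary variables than in \cite{BG00}; moreover, $G_2$ and $G_3$ lead to genuinely different recursions that must be treated separately. Once $F$ has been identified in each case and shown to be conjugate to a Chebyshev-type map on $[-1,1]$, the density of $F$-orbits of pre-images yields the desired equality $\spec(M^T)=[-1,1]$ and completes the proof.
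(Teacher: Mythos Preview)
Your bipartiteness argument is wrong: generators being involutions does \emph{not} force every relation to have even length. The Klein four-group with generating set $\{a,b,ab\}$ is a counterexample, since $a\cdot b\cdot(ab)=1$ is a relation of length~$3$. In the paper, bipartiteness of $\Cay(G_m,T)$ is proved by a genuine inductive argument using the self-similar decomposition of the generators: one shows that in any relator $w$, the number of occurrences of each $b_i$ is even by projecting to successive levels and tracking where the $a$'s come from. You will need that argument (or something equivalent) here.

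More seriously, the identity you invoke, $\spec(M^T)=\overline{\bigcup_n \spec(M_n^T)}$, is not what amenability gives you. Amenability of the graphs yields $\overline{\bigcup_n \spec(M_n^T)}=\spec(M_\xi^T)$, and amenability of the group (Hulanicki) yields only the inclusion $\spec(M_\xi^T)\subset\spec(M^T)$. These can be strict: in fact the paper computes $\spec(M_\xi^T)=\bigl[\tfrac{m-3}{m+1},1\bigr]$, which for $m=2,3$ is $[-\tfrac13,1]$ and $[0,1]$ respectively---\emph{not} $[-1,1]$. So your Schur--complement plan, even if carried out, would at best recover $\spec(M_\xi^T)$, and your Chebyshev expectation for the renormalization is simply false. (Incidentally, no Schur complement is needed for $\spec(M_\xi^T)$: with the \v{S}uni\'c generators the random walk on the line has period~$1$, so Floquet--Bloch on a single vertex gives the answer in one line.)

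The paper's route is shorter: compute $\spec(M_\xi^T)=\bigl[\tfrac{m-3}{m+1},1\bigr]$, use amenability to get $\spec(M^T)\supset\bigl[\tfrac{m-3}{m+1},1\bigr]$, use bipartiteness to get $\spec(M^T)\supset\bigl[-1,\tfrac{3-m}{m+1}\bigr]$ as well, and observe that for $m=2,3$ these two intervals already cover $[-1,1]$. Your setup of ``symmetry plus Schreier inclusion'' is exactly the right skeleton; you just need to feed it the actual Schreier spectrum rather than trying to manufacture $[-1,1]$ directly.
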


For spinal groups acting on trees of higher degree, we do not know the spectrum of the group, but we do know the spectrum $\spec(M_\xi)$ of the Schreier graphs. Consider the map $F(x) = x^2 - d(d-1)$, and denote $\psi(t) = \frac{1}{d^{m-1}}(|S|^2t^2 - |S|(|S| - 2)t - (|S| + d - 2))$.

\begin{restatable}{thm}{spectrumschreier}
	\label{thm:spectrum_schreier}
	Let $G$ be a spinal group with $d \ge 2$ and $m \ge 1$, generated by the spinal generators. Then, for any $\xi \in X^\N$,
	\begin{equation}
	\spec(M_\xi) = \left\lbrace \frac{|S|-d}{|S|}\right\rbrace \cup \psi^{-1}\left(\overline{\bigcup_{n \ge 0}F^{-n}(0)}\right).
	\end{equation}
	
	For $d=2$, we have $\spec(M_\xi) = \left[- \frac{1}{2^{m-1}}, 0\right] \cup \left[1 - \frac{1}{2^{m-1}}, 1\right]$.
	
	For $d>2$, we can decompose
	\[
	\spec(M_\xi) = \spec^0(M_\xi) \cup \spec^\infty(M_\xi),
	\]
	with $\spec^\infty(M_\xi)$ being a Cantor set and $\displaystyle \spec^0(M_\xi) = \left\lbrace \frac{|S|-d}{|S|}\right\rbrace\cup\psi^{-1}\left(\bigcup_{n \ge 0}F^{-n}(0)\right)$ being a countable set of isolated points accumulating on this Cantor set.
\end{restatable}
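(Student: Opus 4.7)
The plan is to exploit the self-similar structure of the spinal action on $T_d$ to obtain a renormalization formula relating the spectra of $M_n$ and $M_{n+1}$, and then pass to the infinite Schreier graph using amenability.

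First, I would analyze the block decomposition of $M_{n+1}$ coming from $X^{n+1} = X \times X^n$. Each spinal generator $b_i$ acts on $\ell^2(G/H_{n+1}) \cong \ell^2(G/H_n) \otimes \C^d$ as a block-diagonal matrix, one block being a generator at level $n$ (determined by the corresponding epimorphism $\omega_i$) and the remaining blocks being identities; the cyclic root generator contributes a cyclic permutation matrix tensored with the identity. After summing and normalizing, the eigenvalue equation $M_{n+1} v = t v$ becomes a system of $d$ linear equations in $d$ blocks of $\ell^2(G/H_n)$-vectors. Eliminating $d-1$ of these blocks by a Schur-complement computation should produce a one-parameter matrix equation of the form $A(t)\, M_n + B(t)\, I = 0$ for explicit polynomials $A,B$, so that $t \in \spec(M_{n+1})$ precisely when $-B(t)/A(t) \in \spec(M_n)$, together with finitely many resonance values of $t$ where $A(t) = 0$.

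Second, I plan to verify that the change of variable $\psi$ conjugates the rational map $t \mapsto -B(t)/A(t)$ to the polynomial $F(x) = x^2 - d(d-1)$ on the new parameter $x$. The shape of $\psi$ (a quadratic in $t$) is in fact forced by this requirement, and the constant $d(d-1)$ reflects the branching ratio of the tree. This step follows in spirit the Bartholdi--Grigorchuk computation in \cite{BG00} for a single spinal group, and the main obstacle will be to check that the renormalization is \emph{universal in $\omega$}: although the epimorphisms enter the block structure of the generators, they should not affect the recursion itself but only the base-case initial spectrum. Granting this, an induction on $n$ yields
\[
\spec(M_n) \;=\; \Bigl\{\tfrac{|S|-d}{|S|}\Bigr\} \,\cup\, \psi^{-1}\!\Bigl(\bigcup_{k=0}^{n-1} F^{-k}(0)\Bigr),
\]
with the isolated value $\frac{|S|-d}{|S|}$ coming from the trivial representation on the root block and being preserved by each renormalization step.

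Third, since spinal groups are amenable by \cite{JNS}, the result in \cite{BG00} gives $\spec(M_\xi) = \overline{\bigcup_n \spec(M_n)}$ for every $\xi \in \partial T_d$. The map $\psi$ is a proper polynomial, so taking closures commutes with $\psi^{-1}$, and we obtain
\[
\spec(M_\xi) \;=\; \Bigl\{\tfrac{|S|-d}{|S|}\Bigr\} \,\cup\, \psi^{-1}\!\Bigl(\overline{\textstyle\bigcup_{n\ge 0} F^{-n}(0)}\Bigr).
\]

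Finally, I would read off the topology of the spectrum from the complex dynamics of $F(x) = x^2 - d(d-1)$. For $d=2$ this is the Chebyshev-like map $x^2 - 2$, semiconjugate to $z \mapsto z^2$ on the unit circle; the backward orbit of $0$ is dense in $[-2,2]$ and so $\overline{\bigcup F^{-n}(0)} = [-2,2]$, whose preimage under the quadratic $\psi$ is the stated union of two intervals $[-2^{1-m},0] \cup [1 - 2^{1-m}, 1]$. For $d \ge 3$ the critical point $0$ escapes to infinity under iteration of $F$, so $F$ is hyperbolic on a neighborhood of its Julia set and the Julia set is a Cantor set of Lebesgue measure zero. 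The backward orbit $\bigcup_n F^{-n}(0)$ is then a countable set of escaping preimages whose accumulation points form exactly this Cantor set. Pulling back through $\psi$ splits $\spec(M_\xi)$ into the Cantor part $\spec^\infty(M_\xi) = \psi^{-1}(J(F))$ and the countable set $\spec^0(M_\xi)$ of isolated points accumulating on it, completing the proof.
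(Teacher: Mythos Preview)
Your overall architecture (finite-level recursion via Schur complement, then closure via amenability, then Julia-set dichotomy for $F$) matches the paper exactly, and your endgame for $d=2$ versus $d\ge 3$ is correct. The gap is in the second step: the Schur complement does \emph{not} produce an equation of the form $A(t)\,M_n + B(t)\,I = 0$. When you eliminate the $d-1$ blocks, the surviving block is of the form $B_{n-1} + \lambda'\,A_{n-1} - \mu'$ with $\lambda' \neq 1$ in general, where $A_n$ and $B_n$ are the separate contributions of the root generator and the $B$-generators to the adjacency matrix. Since $B_{n-1}$ is not a scalar matrix (it carries the full recursive structure), this is not a linear combination of $M_{n-1}$ and $I$, and no one-parameter spectral map $t \mapsto -B(t)/A(t)$ exists.

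The fix, which is what the paper does, is to work from the start with the two-parameter pencil $Q_n(\lambda,\mu) = B_n + \lambda A_n - \mu$ and derive a recursion $|Q_n(\lambda,\mu)| = (\text{prefactor})\cdot |Q_{n-1}(\lambda',\mu')|$ for an explicit rational map $(\lambda,\mu)\mapsto(\lambda',\mu')$ on $\R^2$. Unfolding this recursion factors $|Q_n|$ as a product of hyperbolas $H_x(\lambda,\mu)=0$ indexed by $x \in \bigcup_{k\le n-2} F^{-k}(0)$; it is only after this factorization that one sets $\lambda = 1$, and the two intersection points of each hyperbola with $\{\lambda=1\}$ are precisely $\psi^{-1}(x)$. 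Your statement that the renormalization is universal in $\omega$ is correct and is visible in this computation: the epimorphisms $\omega_k$ enter the individual generator matrices but cancel in the sum $B_n = \sum_{b\neq 1} b_n$, which depends only on $d$ and $m$. A minor correction: at finite level the eigenvalue $1$ is present but is not in $\psi^{-1}\bigl(\bigcup_{k\le n-2}F^{-k}(0)\bigr)$; it only appears in the closure as $\psi^{-1}(d)$ with $d$ the fixed point of $F$.
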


Notice that this spectrum is the preimage by the quadratic map $\psi$ of the set $\overline{\bigcup_{n \ge 0}F^{-n}(0)}$ of preimages of $0$ under $F$ and of its closure, the Julia set of $F$ (plus an isolated point). For $d=2$, the Julia set of $F(x) = x^2-2$ is the interval $[-2, 2]$, which contains $\cup_{n \ge 0} F^{-n}(0)$, hence its preimage by $\psi$ is the union of two intervals. For $d>2$, however, the Julia set of $F$ is a Cantor set, and is disjoint with $\cup_{n \ge 0} F^{-n}(0)$. Therefore, its preimage by $\psi$ is again a Cantor set, and $\psi^{-1}(\cup_{n \ge 0} F^{-n}(0))$ is a countable set of points accumulating on this Cantor set.

Our proof of Theorem~\ref{thm:spectrum_schreier} follows the strategy developed in~\cite{BG00} for some examples and generalizes their technique. First, we use the Schur complement method to find a recurrence between the spectrum at level $n$ and the spectrum at level $n-1$. Then we solve this recurrence to completely describe these finite spectra in Theorem~\ref{thm:finite_spec}. Finally, we use this result to compute $\spec(M_\xi)$ in the proof of Theorem~\ref{thm:spectrum_schreier}, using the fact that the Schreier graphs on the boundary are limits of those on finite levels.

The next step after identifying the spectrum is to study the spectral measures. Our results involve both the classical spectral measures on the infinite graphs and the empirical spectral measure (density of states), obtained by the finite approximations of the infinite graph.

Let $\CH$ be a Hilbert space, let $T: \CH \to \CH$ be a self-adjoint linear operator, and let $f \in \CH$. Then there is a unique positive measure $\mu_f$ on $\spec(T)$ such that for every $n \ge 0$
\[
\int_{\spec(T)}x^n d\mu_f(x) = \langle T^n f, f \rangle,
\]
called the \emph{spectral measure} of the operator $T$ associated with $f$. Notice that $\mu_f(\spec(T)) = \norm{f}^2$, so $\mu_f$ is a finite measure. Let $\Gamma$ be a graph. For every vertex $p$ of $\Gamma$, we call the spectral measure of the Markov operator for the simple random walk on $\Gamma$ $\mu_p = \mu_{\delta_p}$ its \emph{Kesten spectral measure}, where $\delta_p$ is $1$ at $p$ and vanishes everywhere else (see e.g. I.1.C in~\cite{W00}). The $n$-th moment of the Kesten spectral measure $\mu_p$ is the probability for the random walk to return to $p$ in $n$ steps. Such measures for Markov operators were first considered by Kesten~\cite{K59}. In this paper we will consider the measures $\mu_\eta$ for the vertices $\eta$ of Schreier graphs $\Gamma_\xi$ of points $\xi \in X^\N$.

These graphs are obtained as limits of the finite Schreier graphs $\Gamma_n$ on the levels of the tree, hence we also study the empirical spectral measure, or density of states. Given a sequence of finite graphs $\{Y_n\}_n$, for every $n \ge 0$ let $\nu_n$ be the counting measure on the spectrum of the Markov operator $M_n$ on $Y_n$:
\[
\nu_n = \frac{1}{|Y_n|}\sum_{\lambda \in \spec(M_n)}\delta_\lambda,
\]
where $\delta_x$ is the Dirac measure at $x$, and the eigenvalues are counted with multiplicity. Following~\cite{GZ04}, we call the weak limit $\nu$ of the measures $\nu_n$ the \emph{empirical spectral measure} or \emph{density of states} of $\{Y_n\}_n$.

\begin{restatable}{thm}{KNSspectralmeasure}
	\label{thm:KNS_spectral_measure}
	
	Let $\nu$ be the density of states of $\{\Gamma_n\}_n$. If $d = 2$, $\nu$ is absolutely continuous with respect to the Lebesgue measure. Its density is given by the function
	\begin{equation}
		g(x) = \frac{|2^{m-1} - 1 - 2^mx|}{\pi\sqrt{x(1-x)(2^mx + 2)(2^mx + 2 - 2^m)}}.
		\label{eq:dens_states}
	\end{equation}

	If $d \ge 3$, then $\nu$ is discrete. More precisely,
	\[
	\nu = \frac{d-2}{d}\delta_{\frac{|S|-d}{|S|}} + \sum_{n \ge 0}\frac{d-2}{d^{n+2}}\sum_{x \in \psi^{-1}(F^{-n}(0))}\delta_x.
	\]
\end{restatable}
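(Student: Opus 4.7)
The plan is to apply Theorem~\ref{thm:finite_spec}, which gives an explicit description with multiplicities of $\spec(M_n)$ in terms of preimages of $0$ under iterates of $F$ pulled back by $\psi$, together with the isolated eigenvalue $\frac{|S|-d}{|S|}$. This lets us write $\nu_n$ as an explicit atomic measure whose weak limit we then compute.

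\textbf{Case $d=2$.} Here $|G/H_n|=2^n$ grows at the same rate as $\deg F^n=2^n$, so most eigenvalues at each level are ``new.'' The key dynamical input is Brolin's equidistribution theorem applied to the Chebyshev map $F(x)=x^2-2$, whose Julia set is $[-2,2]$: the normalized counting measures on $F^{-n}(0)$ converge weakly to the measure $\mu_F$ on $[-2,2]$ with density $\frac{1}{\pi\sqrt{4-x^2}}$. Pulling this back through the two-sheeted branched cover $\psi$ and renormalizing to obtain a probability measure, we find that $\nu_n$ converges weakly to the measure with density $\frac{|\psi'(t)|}{2\pi\sqrt{4-\psi(t)^2}}$ on $\spec(M_\xi)$. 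For $d=2$ the size of the spinal generating set is $|S|=2^m$, which gives $\psi(t)=2(2^m t^2-(2^m-2)t-1)$. A direct factorization yields
\[
2-\psi(t)=2(1-t)(2^m t+2),\qquad 2+\psi(t)=2t(2^m t+2-2^m),
\]
and $\psi'(t)=4(2^m t-2^{m-1}+1)$, so $|\psi'(t)|=4|2^{m-1}-1-2^m t|$. Substituting recovers exactly the density $g(x)$ of~\eqref{eq:dens_states}. The mass of the isolated eigenvalue contributes at most $1/2^n$ and vanishes in the limit.

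\textbf{Case $d\ge 3$.} Here $|G/H_n|=d^n$ grows strictly faster than $\deg F^n=2^n$, so eigenvalues approaching the Cantor Julia set have asymptotically vanishing density. Theorem~\ref{thm:finite_spec} yields that at level $n$ the multiplicity of a point $x\in\psi^{-1}(F^{-k}(0))$ with $k\le n-2$ is exactly $(d-2)d^{n-k-2}$, the $2^n$ ``new'' eigenvalues in $\psi^{-1}(F^{-(n-1)}(0))$ each have multiplicity $1$, and the isolated eigenvalue $\frac{|S|-d}{|S|}$ has multiplicity $(d-2)d^{n-1}$ (one verifies these total $d^n$). Dividing by $d^n$ and passing to the limit gives the discrete weights of the statement, and the Cantor part receives no mass because the contribution of the new eigenvalues is $2^n/d^n\to 0$. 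A direct computation confirms the claimed weights sum to $1$:
\[
\frac{d-2}{d}+\sum_{n\ge 0}\frac{d-2}{d^{n+2}}\cdot 2^{n+1}=\frac{d-2}{d}+\frac{2(d-2)}{d^2}\cdot\frac{1}{1-2/d}=\frac{d-2}{d}+\frac{2}{d}=1.
\]

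\textbf{Main obstacle.} The principal difficulty is bookkeeping of multiplicities from the recursion of Theorem~\ref{thm:finite_spec}, verifying that the new eigenvalues at each level have multiplicity $1$ and the older ones obey the clean formula $(d-2)d^{n-k-2}$ for $d\ge 3$, and checking the polynomial identities for $2\pm\psi(t)$ and $|\psi'(t)|$ in the $d=2$ case. Once these algebraic steps are done, the weak convergence is immediate: from Brolin equidistribution in the $d=2$ case, and from explicit evaluation of atomic weights in the $d\ge 3$ case.
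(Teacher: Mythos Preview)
Your overall strategy is correct and, for $d\ge 3$, coincides with the paper's: write $\nu_n$ explicitly from the eigenvalue multiplicities of Theorem~\ref{thm:finite_spec} and pass to the weak limit. However, your stated multiplicities are wrong. From the factorization in Proposition~\ref{prop:factorization}, at level $n\ge 2$ the eigenvalue $\frac{|S|-d}{|S|}$ has multiplicity $(d-2)d^{n-1}+1$, each point of $\psi^{-1}(F^{-k}(0))$ for $0\le k\le n-2$ has multiplicity $(d-2)d^{n-k-2}+1$, and there are \emph{no} eigenvalues in $\psi^{-1}(F^{-(n-1)}(0))$ at level $n$ (the range of $k$ stops at $n-2$). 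Your phantom set of $2^n$ ``new'' simple eigenvalues together with your omitted $+1$'s and the forgotten eigenvalue $1$ happen to cancel exactly, which is why your dimension count still gives $d^n$; but the bookkeeping you identify as the main obstacle is not done correctly. None of this affects the limit, since all the discrepancies are of order $2^n/d^n\to 0$.

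For $d=2$ your route genuinely differs from the paper's. The paper avoids Brolin's theorem entirely: it observes that for $F(x)=x^2-2$ one has the trigonometric parametrization $F^{-k}(0)=\{2\cos\theta:\theta\in\frac{\pi}{2^k}(2\Z+1)\}$ (via $2\cos(\theta/2)=\pm\sqrt{2+2\cos\theta}$), so that $\spec(M_n)$ is the image of the uniform grid $\frac{2\pi}{2^n}\Z$ under an explicit map $\chi$, and then reads off the density by change of variables. Your approach via Brolin equidistribution is more conceptual and would generalize better, but note that $\nu_n$ is not the pushforward of the uniform measure on $F^{-n}(0)$; it is (up to two stray atoms) the pushforward of the uniform measure on $\bigcup_{k=0}^{n-2}F^{-k}(0)$, a weighted Ces\`aro average of the Brolin approximants with weights $2^{k}/2^{n-1}$. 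You should say why this average still converges to the Brolin measure (the weights concentrate on large $k$, so this follows from weak convergence of each level). Once that gap is filled, your factorizations $2-\psi(t)=2(1-t)(2^mt+2)$, $2+\psi(t)=2t(2^mt+2-2^m)$ and $|\psi'(t)|=4|2^{m-1}-1-2^mt|$ are correct and recover $g$ exactly.
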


Kesten spectral measures also have different type for the cases $d=2$ and $d \ge 3$. In the binary case, linearity of the Schreier graphs and the fact that all $\Gamma_\xi$ are isomorphic (as unlabeled graphs), for $\xi \in X^\N$ not in the orbit of $1^\N$, allow us to conclude that all the Kesten spectral measures $\mu_\xi$ are equal with the exception of the orbit of $1^\N$. For $d\geq 3$, we prove that $M_\xi$ possesses a complete system of eigenfunctions of finite support, corresponding to the isolated eigenvalues $spec^0$, for a set of points $\xi$ in $X^\N$ of full measure.

\begin{restatable}{prop}{Kestenspectralmeasurebinary}
	\label{prop:Kesten_spectral_measure_binary}
	Let $d=2$ and let $\xi \in X^\N$. The spectral measure $\mu_\xi$ is absolutely continuous with respect to the Lebesgue measure. For every $\xi$ not coorbital to $1^\N$, $\mu_\xi$ coincides with the density of states $\nu$ (see equation~(\ref{eq:dens_states})). In addition, $\mu_{1^\N}$ has density
	\begin{equation}
		h(x) = \frac{|x(2^mx + 2)|}{\pi\sqrt{x(1-x)(2^mx + 2)(2^mx + 2 - 2^m)}}.
		\label{eq:dens_1N}
	\end{equation}
\end{restatable}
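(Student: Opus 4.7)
First I would establish the identity $\nu = \int_{X^\N} \mu_\xi \, d\mu(\xi)$, where $\mu$ denotes the Bernoulli $(1/2,1/2)$ measure on $X^\N$. Writing
\[
\int x^k \, d\nu_n(x) \;=\; \frac{1}{|\Gamma_n|}\mathrm{tr}(M_n^k) \;=\; \sum_{v\in V(\Gamma_n)} 2^{-n}(M_n^k)_{v,v},
\]
and identifying each level-$n$ vertex $v$ with the cylinder of $\mu$-measure $2^{-n}$ it defines, the Benjamini--Schramm convergence of the rooted finite graphs $(\Gamma_n, v_n(\xi))$ to $(\Gamma_\xi, \xi)$ for $\mu$-a.e.\ $\xi$ yields $(M_n^k)_{v_n(\xi), v_n(\xi)} \to \langle M_\xi^k \delta_\xi, \delta_\xi\rangle$. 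A dominated convergence argument (the operators are uniformly bounded by $1$) then matches the moments of $\nu$ with those of $\int \mu_\xi\, d\mu(\xi)$, giving the claim.

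Next I would observe that $\mu_\xi$ is an invariant of the rooted isomorphism class of the labelled multigraph $(\Gamma_\xi, \xi)$, since $M_\xi$ and the vector $\delta_\xi$ only see this class. For $d=2$ the graph $\Gamma_\xi$ is a doubly-infinite line decorated with loops and double edges whose local pattern is prescribed by finite windows of the sequence $\xi$. A case analysis of these windows (available from the description of the spinal generators in Section~\ref{sec:preliminaries}) should show that for every $\xi \in X^\N$ not coorbital to $1^\N$ the pointed graph $(\Gamma_\xi, \xi)$ is isomorphic to one fixed rooted labelled multigraph, so all such $\mu_\xi$ coincide. Since the orbit of $1^\N$ is countable and hence $\mu$-null, the integral formula forces this common value to be $\nu$, whose density is exactly $g$ by Theorem~\ref{thm:KNS_spectral_measure}. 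This handles the first assertion and establishes absolute continuity for $\xi$ not coorbital to $1^\N$.

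To compute $\mu_{1^\N}$ I would use the Schur complement renormalization already employed in the proof of Theorem~\ref{thm:spectrum_schreier}, but now applied to the \emph{rooted} Schreier graph at $1^\N$. The self-similar decomposition of $\Gamma_{1^\N}$ along the prefixes $1^n$ gives a closed functional equation for the diagonal Green's function
\[
G_{1^\N}(z) \;=\; \langle (z - M_{1^\N})^{-1}\delta_{1^\N}, \delta_{1^\N}\rangle,
\]
which differs from the one satisfied at a generic basepoint by a different ``boundary condition'' reflecting that $1^\N$ sits at the end of a distinguished half-infinite substructure rather than in a bi-infinite portion. Solving this recursion in closed form and applying the Stieltjes inversion formula
\[
\frac{d\mu_{1^\N}}{dx}(x) \;=\; \frac{1}{\pi}\lim_{\varepsilon\to 0^+}\mathrm{Im}\, G_{1^\N}(x+i\varepsilon)
\]
should produce the explicit density $h$ in~\eqref{eq:dens_1N}, together with absolute continuity of $\mu_{1^\N}$.

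The main obstacle is this last step: setting up the correct recursion at the exceptional basepoint (which requires identifying precisely the self-similar piece containing $1^\N$ and its coupling to the bulk graph), and then verifying by a nontrivial but essentially algebraic rational manipulation that the imaginary part of the solution is exactly $h$. A secondary technical point is the case analysis in the second paragraph: one must rule out, for each cofinal tail distinct from $1^\N$, that the local labelling near $\xi$ breaks the rooted isomorphism with a generic point, which is a finite combinatorial check once the spinal labelling is made explicit.
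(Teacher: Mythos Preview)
Your proposal follows essentially the same architecture as the paper: establish $\nu=\int\mu_\xi\,d\mu(\xi)$, argue that $\mu_\xi$ is constant off the orbit of $1^\N$ by rooted-graph isomorphism, and deduce $\mu_\xi=\nu$ there; then handle $1^\N$ separately via the Stieltjes transform. Two points of comparison are worth noting. First, your ``case analysis of finite windows'' to establish rooted isomorphism is more work than needed: the paper simply observes that for $d=2$ every vertex of $\Gamma_\xi$ (with $\xi$ not coorbital to $1^\N$) has exactly $2^{m-1}-1$ loops, $2^{m-1}$ edges to one neighbour, and one edge to the other, so the two-ended line is vertex-transitive as a weighted graph and the equality of all $\mu_\xi$ is immediate. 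Second, for $\mu_{1^\N}$ you propose the Schur-complement renormalization from Theorem~\ref{thm:spectrum_schreier}; this can be made to work, but bear in mind that for $d=2$ the graph $\Gamma_{1^\N}$ is merely a half-line with $2$-periodic transition probabilities, not a genuinely self-similar fractal, so the ``self-similar decomposition along prefixes $1^n$'' does not give a nontrivial recursion in the way it does for $d\ge 3$. The paper instead computes the diagonal Green's function directly from the moment-generating function of the half-line walk (citing~\cite{GK12}) and then applies Stieltjes inversion, which is the more natural route in this linear setting.
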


For $d \ge 3$, a study of the eigenfunctions on both finite and infinite Schreier graphs yields that the operator $M_\xi$ has discrete Kesten spectral measures and the eigenfunctions are strongly localized.

Let $\sigma: X^\N \to X^\N$ be the map which removes the first letter of any point in $X^\N$. For any given $\xi \in X^\N$, we define $I_\xi = \{ n \in \N \mid \forall r \ge 0, \: (d-1)^r0 \textrm{ is not a prefix of } \sigma^n(\xi) \}$, and consider the subset $W$ of $X^\N$ defined as $W = \{\xi \in X^\N \mid k, k+1 \in I_\xi \textrm{ for infinitely many } k \}$.

\begin{restatable}{thm}{Kestenspectralmeasure}
	\label{thm:Kesten_spectral_measure}
	For every $d \ge 3$, the set $W$ has uniform Bernoulli measure $1$ in $X^\N$. For every spinal group defined by $d \ge 3$, $m \ge 1$ and $\omega \in \Omega_{d, m}$, and every $\xi \in W$, the operator $M_\xi$ has pure point spectrum. More precisely, it possesses a complete system of finitely supported eigenfunctions corresponding to the eigenvalues that form $\spec^0(M_\xi)$.
\end{restatable}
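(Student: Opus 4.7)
The plan is to split the argument into a measure-theoretic step and a spectral step. For the full-measure claim, set $A=\{\xi : 0\in I_\xi\}\cap\{\xi : 1\in I_\xi\}$, so that $\xi\in W$ iff $\sigma^k\xi\in A$ for infinitely many $k$. The "bad" cylinders $\{\xi : \xi_0\cdots\xi_r=(d-1)^r 0\}$, $r\ge 0$, are pairwise disjoint of total Bernoulli mass $\sum_{r\ge 0}d^{-(r+1)}=1/(d-1)$, whence $P(0\in I_\xi)=(d-2)/(d-1)$, and a short cylinder computation gives $P(A)>0$ for $d\ge 3$. Birkhoff's ergodic theorem applied to the characteristic function of $A$ on the ergodic (even Bernoulli) shift $(X^\N,\sigma)$ yields $\frac{1}{N}\sum_{k<N}\mathbf 1_A(\sigma^k\xi)\to P(A)>0$ almost surely, so almost every $\xi$ belongs to $W$.

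For the spectral part, I would follow the template of~\cite{BG00}: the Schur-complement recurrence underlying Theorem~\ref{thm:spectrum_schreier} produces, at each level $n$, an explicit finite-dimensional eigenspace of $M_n$ for every new eigenvalue in $\psi^{-1}(F^{-(n-2)}(0))$, with eigenvectors supported in "substitutional blocks" of $\Gamma_n$. The infinite Schreier graph $\Gamma_\xi$ is an increasing union of such blocks glued along portal vertices whose location is dictated by the prefixes of $\xi$. The combinatorial content of "$n\in I_\xi$" is precisely that no exceptional portal sits inside the relevant substitutional piece at level $n$; hence, when $n\in I_\xi$, any eigenvector of $M_n$ supported away from the portal extends by zero to a finitely supported eigenfunction of $M_\xi$ with the same eigenvalue. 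The isolated eigenvalue $(|S|-d)/|S|$ is handled similarly by transporting along $\xi$ a finitely supported eigenvector from the level on which it first appears.

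To obtain completeness on a fixed $\xi\in W$, pick $k_1<k_2<\cdots$ with $k_j,k_j+1\in I_\xi$. The pair of consecutive good indices produces a finite subgraph $B_j\subset\Gamma_\xi$ on which $M_\xi$ agrees with the finite Markov operator at level $k_j+1$ up to corrections supported on portal vertices, and "$k_j+1\in I_\xi$" forces those corrections to vanish on the lifted eigenvectors. A multiplicity count matched against the atomic density of states in Theorem~\ref{thm:KNS_spectral_measure} shows that the finitely supported eigenvectors already constructed with support in $B_j$ span all of $\ell^2(B_j)$. Since $\bigcup_j B_j=G/H_\xi$, the span of the constructed eigenvectors is dense in $\ell^2(G/H_\xi)$; combined with Theorem~\ref{thm:spectrum_schreier} this forces $M_\xi$ to have pure point spectrum equal to $\spec^0(M_\xi)$.

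The hard part is this last step: one needs to verify that the local operator $M_\xi|_{\ell^2(B_j)}$ carries exactly the eigenvalues, with exactly the multiplicities, predicted by Theorem~\ref{thm:KNS_spectral_measure}, and that the portal decoupling supplied by $k_j,k_j+1\in I_\xi$ is strong enough that no "spurious" vector in $\ell^2(B_j)$ is left unaccounted for. This is where the detailed combinatorial structure of spinal groups acting on $T_d$ enters, and where the argument must go beyond the single-group analysis of~\cite{BG00}; the definition of $I_\xi$ is calibrated exactly to make this matching work, and once the finite-dimensional book-keeping is in place the completeness and pure-point conclusions follow formally.
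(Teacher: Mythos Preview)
Your measure-one argument via Birkhoff is fine and in fact more conceptual than the paper's (the paper gives an elementary self-improving inequality $\mu(Z)\le (1-((d-2)/d)^2)\mu(Z)$ on the tail set $Z$, which avoids ergodic theory entirely but proves the same thing).

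The completeness argument, however, contains a genuine gap that is not just book-keeping. Your key claim is that the finitely supported eigenvectors of $M_\xi$ with support in $B_j$ span all of $\ell^2(B_j)$. This is false. Any such eigenvector, being supported in $\tilde\Gamma_n\cong\Gamma_n$ and satisfying the eigenvalue equation for $M_\xi$, is after identification also an eigenfunction of the finite operator $M_n$ with the same eigenvalue $\lambda\in\spec^0$. But the constant function on $\tilde\Gamma_n$ is an $M_n$-eigenfunction with eigenvalue $1\notin\spec^0$, hence orthogonal (in $\ell^2_n$) to every one of your transported eigenvectors; it can never lie in their span. More generally, for each eigenvalue the ``portal-touching'' eigenfunctions (the paper's $\mathcal F^A,\mathcal F^B,\mathcal F^C$ pieces) fail to transfer, and no density-of-states count can repair this: the deficit is of order $2^n$ in a $d^n$-dimensional space, so it is asymptotically negligible but positive for every $j$. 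Covering $\Gamma_\xi$ by blocks each of which has a nontrivial unreached subspace does not yield density.

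The paper gets around this by not attempting to exhaust $\ell^2(B_j)$. Instead it isolates an \emph{antisymmetric} subspace $\ell^2_a\subset\ell^2$ (functions that, on some good block, are odd under a swap $\Phi^i_n$ of two adjacent sub-copies of $\Gamma_{n-1}$), checks by an explicit basis computation that $\ell^2_a\subset\overline{\langle\mathcal F\rangle}$, and then proves $\ell^2_a$ is dense. The density step is the place where the hypothesis $k,k+1\in I_\xi$ is actually used: given $f\perp\ell^2_a$, one reflects $P_nf$ across a neighboring copy to build an antisymmetric $h$, and $\langle f,h\rangle=0$ together with Cauchy--Schwarz forces $\|f\|=0$. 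This reflection trick is what replaces your dimension count, and it is not a formality---it is the missing idea in your outline.
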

Note that all graphs for which Theorem~\ref{thm:Kesten_spectral_measure} applies are one-ended, and that the subset $W \subset X^\N$ does not depend on $m$ or $\omega$, but only on $d$.

Let us note that in the case of spinal groups with $d \ge 3$ and $m = 1$ it is also possible to compute the spectrum directly via renormalization of the infinite graph, without finite approximation. This method was used on some self-similar graphs by Malozemov and Teplyaev in~\cite{MT03} and on an example of a Schreier graph of the Hanoi towers group by Quint in~\cite{Qui07}. One advantage of this method is that it allows simultaneously to compute the spectrum of all points in the space of Schreier graphs, i.e., not only of the graphs $\{\Gamma_\xi\}_{\xi \in \partial T}$, but also of their accumulation points in the space of labeled rooted graphs (see~\cite{BDN16}). These additional graphs are of special interest, as the spectral measures on them have a nontrivial singular continuous component. This approach will be detailed in a subsequent paper.

\vspace{1em}

The structure of the paper is as follows. In Section~\ref{sec:preliminaries}, we give some basic definitions about spinal groups, Schreier graphs and Markov operators, as well as some examples. Sections~\ref{sec:finite} and~\ref{sec:infinite} are devoted to proving Theorem~\ref{thm:spectrum_schreier}. We conclude the section by computing the density of states in Theorem~\ref{thm:KNS_spectral_measure}. In Section~\ref{sec:pure_point_eigenfunctions}, we discuss the spectral measures on the Schreier graphs and the eigenfunctions of the Markov operators. In particular, we prove the equality of the spectral measures with the density of states if $d=2$, and for $d\ge 3$ we show that the Kesten spectral measures are discrete and concentrated on the set of isolated eigenvalues, for any $\xi$ in a certain explicitly described measure one set of boundary points. We do that by explicitly finding the eigenfunctions of $M_\xi$ and showing that they form a complete set. Moreover, all of them are finitely supported.
In Section~\ref{sec:spectra_cayley}, we show the equality between $\spec(G)$ and $\spec(M_\xi)$ for spinal groups acting on the binary tree. 
The fact that the spectra on $\Gamma_\xi$ do not depend on $\omega\in\Omega_{d,m}$ ensures that we obtain an uncountable family of groups with the same spectrum.
Finally, in Section~\ref{sec:dependence_gen_set} we study the dependence of spectra on generating sets, give some examples and prove Corollary~\ref{cor:generating-set-cantor} and Corollary~\ref{cor:spectrum-ge-go}.

\section{Preliminaries}
\label{sec:preliminaries}

\begin{dfn}(\textbf{Spinal groups})
	Let $d \ge 2$ and $X = \{0, 1, \dots, d-1\}$. We denote by $T$ the $d$-regular rooted tree, whose vertices are in bijection with $X^*$ (the set of all finite words on the alphabet $X$), and by $\partial T$ its boundary, in bijection with $X^\N$ (the set of all right-infinite words on the alphabet $X$).
	
	Choose an integer $m \ge 1$, and let $A = \langle a \rangle = \Z/d\Z$ and $B = (\Z/d\Z)^m$. Denote by $\Epi(B, A)$ the set of epimorphisms from $B$ to $A$, and define $\Omega = \Omega_{d,m} \subset \Epi(B, A)^\N$ to be the set of sequences of epimorphisms satisfying the condition
	\begin{equation}
	\label{eq:kernel-condition}
	\forall i \ge 0, \quad \bigcap\limits_{j\ge i} \Ker(\omega_j) = 1.
	\end{equation}

	Following~\cite{bart_sunik_spinal} and \cite{bart_grig_sunik_branch}, we define for every $\omega = \omega_0\omega_1\dots \in \Omega_{d, m}$ the spinal group $G_\omega$ as the subgroup of $\Aut(T)$ generated by $A$ and $B$. Here, by abuse of notation, $A$ and $B$ denote the subgroups of the following automorphisms:
	\[
	a(v_0 v_1 \dots) = (v_0 + 1 \text{ mod } d) v_1 \dots
	\]\[
	b(v_0 v_1 \dots) = \left\{ \begin{array}{ll}
	v_0v_1 \dots v_n \: \omega_n(b)(v_{n+1}) \: v_{n+2}\dots & \text{if } v_0\dots v_n = (d-1)^n0 \\ v_0v_1\dots & \text{otherwise} \end{array} \right..
	\]
	The condition~(\ref{eq:kernel-condition}) implies that the action of $G_\omega$ on $T$ is faithful. The automorphism $a$ permutes the subtrees under the root cyclically. Elements in $B$ fix all vertices in the \emph{spine}, the rightmost ray of the tree, whose vertices are words of the form $(d-1)^n$. Moreover, their action is trivial everywhere except in the subtrees under vertices of the form $(d-1)^n0$. Notice that the action of $G_\omega$ on the tree is transitive in every level, and orbits in $X^\N$ are cofinality classes.
\end{dfn}

If not stated otherwise, we will consider $G_\omega$ with the \emph{spinal generating set} $S = (A\cup B) \setminus \{1\}$. Recall that $|S| = d^m + d - 2$.

\begin{dfn}(\textbf{Schreier graphs})
Let $G$ be a group finitely generated by $S$, and $H \le G$. The \emph{Schreier graph} associated with $H$, denoted $\Sch(G, H, S)$, is the graph whose vertices are lateral classes $G/H$ and, for every $s \in S$ and $gH \in G/H$, there is an edge from $gH$ to $sgH$ labeled by $s$.

In our case, we will always choose $H$ to be the stabilizer in $G_\omega$ of some vertex $u$ of $X^*$ or $X^\N$.	Since the action is transitive at every level, if $u \in X^n$, the graph will not depend on the choice of $u$, so we will write $\Gamma_n = \Sch(G_\omega, Stab_{G_\omega}(u), S)$. Finally, for $\xi \in X^\N$, we will write $\Gamma_\xi = \Sch(G_\omega, Stab_{G_\omega}(\xi), S)$.

\end{dfn}

\begin{dfn}(\textbf{Markov operator})
For a graph $\Gamma$, the Markov operator is the normalized adjacency operator $M: \ell^2(V(\Gamma)) \to \ell^2(V(\Gamma))$, given by $Mf(x) = \frac{1}{\deg(x)} \sum_{y \sim x} f(y)$.

We will be particularly interested in Markov operators on Schreier or Cayley graphs of spinal groups. For $n \ge 0$, $M_n: \ell^2(X^n) \to \ell^2(X^n)$ will denote the Markov operator on $\Gamma_n$, defined by $M_nf(v) = \frac{1}{|S|}\sum_{s \in S} f(sv)$, and, for $\xi \in X^\N$, $M_\xi: \ell^2(G\xi) \to \ell^2(G\xi)$ will denote the Markov operator on $\Gamma_\xi$, defined by $M_\xi f(\eta) = \frac{1}{|S|}\sum_{s \in S} f(s \eta)$. Finally, $Mf(g) = \frac{1}{|S|}\sum_{s \in S} f(sg)$ acting on $\ell^2(G)$ is the Markov operator on the Cayley graph of $G$ with respect to $S$. We will denote its spectrum by $\spec(G)$ when there is no confusion as to the choice of $S$.

\end{dfn}

The family of spinal groups contains some well-known examples of groups of intermediate growth. In particular the first Grigorchuk group is obtained by taking $d = 2$, $m = 2$ and $\omega = (\pi_d\pi_c\pi_b)^\N$ where $A = \{1, a\}$, $B = \{1, b, c, d\}$ and $\pi_x: B \to A$ is the epimorphism mapping $x$ to $1$. The Fabrykowski-Gupta group corresponds to the case $d=3$, $m=1$ and $\omega = \pi^\N$, where $A = \{1, a, a^2\}$, $B = \{1, b, b^2\}$ and $\pi: B \to A$ is the epimorphism mapping $b$ to $a$.

\section{Spectra of finite Schreier graphs}
\label{sec:finite}

Let $G = G_\omega$ be a spinal group with parameters $d \ge 2$, $m \ge 1$ and $\omega \in \Omega_{d, m}$, as defined above.

\begin{center}
	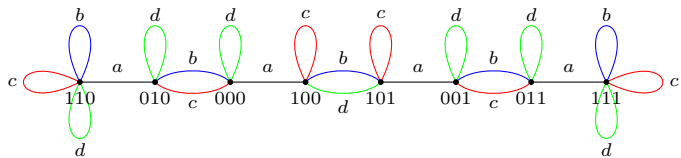
\begin{figure}[h]
		\begin{tikzpicture}
		\draw[color=blue] (0,0) .. controls (0.5, 1) and (-0.5, 1) .. (0,0);
		\draw[color=red, rotate = 90] (0,0) .. controls (0.5, 1) and (-0.5, 1) .. (0,0);
		\draw[color=green, rotate = 180] (0,0) .. controls (0.5, 1) and (-0.5, 1) .. (0,0);
		\draw (-0.9*1, 0) node {\tiny{$c$}};
		\draw (0, 0.9*1) node {\tiny{$b$}};
		\draw (0, -0.9*1) node {\tiny{$d$}};
		
		
		\draw (0,0) -- (1,0);
		\draw (0.5,0) node[above] {\tiny{$a$}};
		
		
		\draw[color=green] (1,0) .. controls (1.5, 1) and (0.5, 1) .. (1,0);
		\draw[color=green] (2,0) .. controls (2.5, 1) and (1.5, 1) .. (2,0);
		\draw[color=blue] (1,0) .. controls (1.2, 0.2) and (1.8, 0.2) .. (2,0);
		\draw[color=red] (1,0) .. controls (1.2, -0.2) and (1.8, -0.2) .. (2,0);
		
		\draw (1.5, 0.1) node[above] {\tiny{$b$}};
		\draw (1.5, -0.1) node[below] {\tiny{$c$}};
		\draw (1, 0.9) node {\tiny{$d$}};
		\draw (2, 0.9) node {\tiny{$d$}};
		
		
		\draw (2,0) -- (3,0);
		\draw (2.5,0) node[above] {\tiny{$a$}};
		
		
		\draw[color=red] (3,0) .. controls (3.5, 1) and (2.5, 1) .. (3,0);
		\draw[color=red] (4,0) .. controls (4.5, 1) and (3.5, 1) .. (4,0);
		\draw[color=blue] (3,0) .. controls (3.2, 0.2) and (3.8, 0.2) .. (4,0);
		\draw[color=green] (3,0) .. controls (3.2, -0.2) and (3.8, -0.2) .. (4,0);
		
		\draw (3.5, 0.1) node[above] {\tiny{$b$}};
		\draw (3.5, -0.1) node[below] {\tiny{$d$}};
		\draw (3, 0.9) node {\tiny{$c$}};
		\draw (4, 0.9) node {\tiny{$c$}};
		
		
		\draw (4,0) -- (5,0);
		\draw (4.5,0) node[above] {\tiny{$a$}};
		
		
		\draw[color=green] (5,0) .. controls (5.5, 1) and (4.5, 1) .. (5,0);
		\draw[color=green] (6,0) .. controls (6.5, 1) and (5.5, 1) .. (6,0);
		\draw[color=blue] (5,0) .. controls (5.2, 0.2) and (5.8, 0.2) .. (6,0);
		\draw[color=red] (5,0) .. controls (5.2, -0.2) and (5.8, -0.2) .. (6,0);
		
		\draw (5.5, 0.1) node[above] {\tiny{$b$}};
		\draw (5.5, -0.1) node[below] {\tiny{$c$}};
		\draw (5, 0.9) node {\tiny{$d$}};
		\draw (6, 0.9) node {\tiny{$d$}};
		
		
		\draw (6,0) -- (7,0);
		\draw (6.5,0) node[above] {\tiny{$a$}};
		
		
		\draw[color=blue] (7,0) .. controls (7.5, 1) and (6.5, 1) .. (7,0);
		\draw[shift={(7,0)}, rotate = -90, color=red] (0,0) .. controls (0.5, 1) and (-0.5, 1) .. (0,0);
		\draw[shift={(7,0)}, rotate = 180, color=green] (0,0) .. controls (0.5, 1) and (-0.5, 1) .. (0,0);
		\draw (7.9, 0) node {\tiny{$c$}};
		\draw (7, 0.9) node {\tiny{$b$}};
		\draw (7, -0.9) node {\tiny{$d$}};
		
		
		\draw[fill=black] (0,0) circle (1/32);
		\draw[fill=black] (1,0) circle (1/32);
		\draw[fill=black] (2,0) circle (1/32);
		\draw[fill=black] (3,0) circle (1/32);
		\draw[fill=black] (4,0) circle (1/32);
		\draw[fill=black] (5,0) circle (1/32);
		\draw[fill=black] (6,0) circle (1/32);
		\draw[fill=black] (7,0) circle (1/32);
		
		\draw (0,0) node[below] {\tiny{$110$}};
		\draw (1,0) node[below] {\tiny{$010$}};
		\draw (2*1,0) node[below] {\tiny{$000$}};
		\draw (3*1,0) node[below] {\tiny{$100$}};
		\draw (4*1,0) node[below] {\tiny{$101$}};
		\draw (5*1,0) node[below] {\tiny{$001$}};
		\draw (6*1,0) node[below] {\tiny{$011$}};
		\draw (7*1,0) node[below] {\tiny{$111$}};

		\end{tikzpicture}
		
		\caption{The graph $\Gamma_3$ for Grigorchuk's group (see Section~\ref{sec:preliminaries}).}
		\label{fig:grigorchuk-3}
	\end{figure}
\end{center}
\begin{center}
	\begin{figure}[h]
		\begin{tikzpicture}
		\tikzset{midarrow>/.style={decoration={markings, mark=at position 0.5 with {\arrow{>}}}, postaction={decorate}}}
		
		\tikzset{
			pics/Loop/.style n args={4}{
				code = {
					\begin{scope}[shift={#1}, rotate=#2]
					\draw[color=#4] (0,0) .. controls (#3/2, #3) and (-#3/2, #3) .. (0,0);
					\draw[fill=black] (0,0) circle (#3/32);
					\end{scope}
				}	
			}
		}
		
		\tikzset{
			pics/Line/.style n args={4}{
				code = {
					\begin{scope}[shift={#1}, rotate=#2]
					\draw[color=#4] (0,0) -- (0: #3);
					\end{scope}
				}	
			}
		}
		
		\tikzset{
			pics/Triangle/.style n args={4}{
				code = {
					\begin{scope}[shift={#1}, rotate=#2]
					\draw[color=#4] (0,0) -- (0: #3) -- (60: #3) -- cycle;
					\draw[fill=black] (0,0) circle (#3/32);
					\draw[fill=black] (0: #3) circle (#3/32);
					\draw[fill=black] (60: #3) circle (#3/32);
					\end{scope}
				}	
			}
		}

		\tikzset{
			pics/Gamma/.style ={
				code = {
					\draw (0,0) .. controls (\r, -\r) and (2*\r, -\r) .. (3*\r,0);
					\draw (0,0) .. controls (\r, \r) and (2*\r, \r) .. (3*\r,0);
					\draw (1.5*\r,0) node {\tiny $\Gamma_{#1}$};
				}
			}
		}

		\pgfmathsetmacro{\r}{1}
		
		\tikzset{
			pics/Xi/.style n args={3}{
				code = {
					\begin{scope}[shift={#1}]
					\pic {Loop={(0,0)}{60+#2}{0.5*\r}{blue}};
					\pic {Loop={(0,0)}{-60+#2}{0.5*\r}{red}};
					\draw (120 + #2: 0.35*\r) node {\tiny{$b$}};
					\draw (60 + #2: 0.35*\r) node {\tiny{$b^2$}};
					\draw[fill=black] (0,0) circle (\r/32);
					\draw (0+#2: 0.2*\r) node {\tiny{$#3$}};
					\end{scope}
				}	
			}
		}
		
		\tikzset{
			pics/Theta/.style n args={2}{
				code = {
					\begin{scope}[shift={#1}, rotate=#2]		  			
					\draw (0:0) edge[midarrow>] (0: \r);
					\draw (0:\r) edge[midarrow>] (60: \r);
					\draw (60:\r) edge[midarrow>] (0:0);
					
					\draw (0:0) edge[midarrow>, bend left] (60: \r);
					\draw (60:\r) edge[midarrow>, bend left] (0: \r);
					\draw (0:\r) edge[midarrow>, bend left] (0:0);
					
					\end{scope}
				}	
			}
		}
		
		\tikzset{
			pics/Lambda/.style n args={2}{
				code = {
					\begin{scope}[shift={#1}, rotate=#2]
					\pic {Triangle={(0,0)}{0 + #2}{\r}{blue}};
					
					\draw[color=blue] (0:0) edge[midarrow>] (0: \r);
					\draw[color=blue] (0:\r) edge[midarrow>] (60: \r);
					\draw[color=blue] (60:\r) edge[midarrow>] (0:0);
					
					\draw (0.5*\r, 0.15*\r) node {\tiny{$b$}};
					\draw (0.65*\r, 0.4*\r) node {\tiny{$b$}};
					\draw (0.35*\r, 0.4*\r) node {\tiny{$b$}};
					
					\draw[color=red] (0:0) edge[bend left, midarrow>] (60: \r);
					\draw[color=red] (60:\r) edge[bend left, midarrow>] (0: \r);
					\draw[color=red] (0:\r) edge[bend left, midarrow>] (0:0);
					
					\draw (0.5*\r, -0.3*\r) node {\tiny{$b^2$}};
					\draw (\r, 0.6*\r) node {\tiny{$b^2$}};
					\draw (0, 0.6*\r) node {\tiny{$b^2$}};
					\end{scope}
				}	
			}
		}

		\tikzset{
			pics/Gamma2/.style n args={3}{
				code = {
					\begin{scope}[shift={#1}, rotate=#2]
					\pic {Lambda={(0,0)}{0 + #2}};
					
					\begin{scope}[shift={(0,0)}]
					\draw (0,-0.2*\r) node {\tiny{$02#3$}};
					\pic {Theta={(0+#2:0)}{180+#2}};
					\pic {Xi={(180+#2:\r)}{60+#2}{12#3}};
					\pic {Xi={(240+#2:\r)}{180+#2}{22#3}};
					\end{scope}
					
					\begin{scope}[shift={(60:\r)}]
					\draw (0,-0.2*\r) node {\tiny{$01#3$}};
					\pic {Theta={(0+#2:0)}{60+#2}};
					\pic {Xi={(60+#2:\r)}{300+#2}{11#3}};
					\pic {Xi={(120+#2:\r)}{60+#2}{21#3}};
					\end{scope}
					
					\begin{scope}[shift={(0:\r)}]
					\draw (0,-0.2*\r) node {\tiny{$00#3$}};
					\pic {Theta={(0+#2:0)}{300+#2}};
					\pic {Xi={(300+#2:\r)}{180+#2}{10#3}};
					\end{scope}
					\end{scope}
				}	
			}
		}
		
		\pic {Lambda={(0,0)}{0}};
		
		\begin{scope}[shift={(0 : 0)}]
		\draw (270 : 0.2*\r) node {\tiny{$201$}};
		\begin{scope}[shift={(180 : 2*\r)}]
		\pic {Gamma2={(0, 0)}{0}{1}};
		\end{scope}
		\end{scope}
		
		\begin{scope}[shift={(60 : \r)}]
		\draw (150 : 0.2*\r) node {\tiny{$200$}};
		\begin{scope}[shift={(60 : 2*\r)}]
		\pic {Gamma2={(0, 0)}{240}{0}};
		\end{scope}
		\end{scope}
		
		\begin{scope}[shift={(0 : \r)}]
		\draw (30 : 0.2*\r) node {\tiny{$202$}};
		\begin{scope}[shift={(300 : 2*\r)}]
		\pic {Gamma2={(0, 0)}{120}{2}};
		\end{scope}
		\end{scope}
		\end{tikzpicture}
		
		\caption{The graph $\Gamma_3$ for the Gupta-Fabrykowski group (see Section~\ref{sec:preliminaries}).}
		\label{fig:gupta-fabrykowski-3}
	\end{figure}
\end{center}

Our goal in this section is to compute $\spec(M_n)$, the spectrum of the Schreier graph associated to the action of $G$ on the $n$-th level of the tree, which is a finite graph on $d^n$ vertices. Typical examples of finite Schreier graphs of spinal groups with $d = 2$ and $d \ge 3$ can be found in Figures~\ref{fig:grigorchuk-3} and~\ref{fig:gupta-fabrykowski-3}, respectively.

In order to simplify the computations, we will compute the spectrum of $\M_n = |S|M_n$ of the adjacency matrix of $\Gamma_n$, and then normalize it dividing by $|S|$. We will first find a recurrence relation between $\spec(\M_n)$ and $\spec(\M_{n-1})$, then we will solve this recurrence to explicitly get $\spec(\M_n)$ and finally in Section~\ref{sec:infinite} we will take the limit to find $\spec(M_\xi)$.

Since the vertices of $\Gamma_n$ are words on $X = \{0, \dots, d-1\}$ of length $n$, we will order them lexicographically. Let us first start by computing the matrix of $\M_n$.  We write it as a $d \times d$ block matrix where each block is a matrix of size $d^{n-1} \times d^{n-1}$. A block denoted by a scalar is the corresponding multiple of the identity matrix $I_{d^{n-1}}$.

\begin{lem}
	\label{lem:adjacency_matrix}
	
	Let $A_0 = d-1$ and $B_0 = d^m - 1$. Define, for $n \ge 1$, the following matrices in $M_{d^n, d^n}(\R)$.
	\[
	A_n = \left(
	\begin{array}{ccccc}
	0 & 1 & \dots & 1 & 1 \\
	1 & 0 & \dots & 1 & 1 \\
	\vdots & \vdots & \ddots &  \vdots &  \vdots \\
	1 & 1 & \dots  & 0 & 1 \\
	1 & 1 &  \dots & 1 & 0
	\end{array}
	\right),\]\[
	B_n = \left(
	\begin{array}{ccccc}
	d^{m-1}A_{n-1} + d^{m-1} - 1 &  &  &  &  \\
	& d^{m-1} - 1 &  &   &  \\
	&   & \ddots  &   &   \\
	&  &   & d^{m-1} - 1 &  \\
	&  &   &  & B_{n-1}
	\end{array}
	\right).
	\]
	
	Then, the matrix of $\M_n$ is $A_n + B_n$.
\end{lem}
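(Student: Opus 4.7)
The plan is to split $\M_n$ into the contributions coming from the generators in $A \setminus \{1\}$ and from those in $B \setminus \{1\}$, and to identify these two summands with $A_n$ and $B_n$ respectively, in the lexicographic block ordering by the first letter $v_0$.

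For the $A$-contribution, the observation is that $a^k$ shifts only $v_0$ by $k \bmod d$ and leaves $v_1 \ldots v_{n-1}$ unchanged. Hence for any two vertices $v, w \in X^n$ the number of $k \in \{1, \ldots, d-1\}$ with $a^k v = w$ equals $1$ if $v$ and $w$ agree on all letters except the first, and $0$ otherwise. In the $d \times d$ block presentation grouping vertices by $v_0$, this is exactly $A_n$: off-diagonal blocks are $I_{d^{n-1}}$ and diagonal blocks vanish.

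For the $B$-contribution, no element of $B$ modifies the first letter, so the matrix is block-diagonal, and I would identify each of the $d$ diagonal blocks by case analysis on $v_0$. When $v_0 = 0$, only the defining condition at level $0$ applies, so $b \in B$ sends $v$ to $0\, \omega_0(b)(v_1)\, v_2 \ldots v_{n-1}$; using $|\ker \omega_0| = d^{m-1}$ to count the fibres of $\omega_0$, there are $d^{m-1}$ elements of $B$ realising each nonzero shift of $v_1$ and $d^{m-1} - 1$ nontrivial stabilisers of $v$, and grouping these by $v_1$ yields the block $d^{m-1} A_{n-1} + (d^{m-1} - 1) I_{d^{n-1}}$. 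When $v_0 = d-1$, the conditions ``$v_0 \ldots v_j = (d-1)^j 0$'' on $v$ (which necessarily have $j \ge 1$) correspond, by deleting the leading $d-1$ and reindexing $k = j-1$, to the conditions defining the $B$-action on the tail $v' = v_1 \ldots v_{n-1} \in X^{n-1}$; since the multigraph counts depend only on the uniform fibre size $d^{m-1}$ of the $\omega_j$'s and not on the $\omega_j$'s themselves, this diagonal block is the $B$-part of $\M_{n-1}$, namely $B_{n-1}$, and induction on $n$ closes the case. When $v_0 \in \{1, \ldots, d-2\}$, no prefix of $v$ is of the form $(d-1)^j 0$, so every $b \in B$ fixes $v$ and the diagonal block is a scalar multiple of $I_{d^{n-1}}$, matching the intermediate blocks of $B_n$.

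The central computation is the $v_0 = 0$ case, where the coset structure of $\ker \omega_0$ produces simultaneously the off-diagonal $d^{m-1} A_{n-1}$ term and the partial loop count $d^{m-1} - 1$. The main bookkeeping difficulty lies in the recursive case $v_0 = d-1$: one must check that the reindexing $j \mapsto j-1$ matches the level-$n$ $B$-action on the branch $(d-1) \cdot X^{n-1}$ with the level-$(n-1)$ $B$-action on the tail in a way that is compatible with the inductive definition of $B_{n-1}$, using crucially that the combinatorial structure of the matrix depends only on $d$ and $m$, not on the specific sequence $\omega \in \Omega_{d, m}$. The base case $n = 0$ (a single vertex with $|S|$ loops) is immediate, since $A_0 + B_0 = (d-1) + (d^m - 1) = |S|$.
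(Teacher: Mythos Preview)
Your proposal is correct and follows essentially the same approach as the paper: split $\M_n$ into the $A$-contribution and the $B$-contribution, identify the former with $A_n$ via the cyclic action on the first letter, and compute the latter block-diagonally by case analysis on $v_0$, using the uniform fibre size $d^{m-1}$ of each $\omega_j$ for the $v_0=0$ block and recursion for the $v_0=d-1$ block. The paper packages the same argument slightly differently, writing out the individual generator matrices $a_n$ and $b_{n,k}$ (the latter carrying an explicit shift index $k$ in $\omega$) before summing, whereas you count edges directly; your observation that the matrix depends only on $d,m$ and not on $\omega$ is exactly the paper's observation that $\sum_{b\in B\setminus\{1\}}\omega_k(b)_{n-1}$ is independent of $k$.
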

\begin{proof}
	In order to write the adjacency matrix of $\Gamma_n$, we will first write the adjacency matrices of each of the generators we consider. For a generator $s \in S$, its adjacency matrix for $\Gamma_n$ is denoted $s_n$, and its coefficient $(u, v)$ is $1$ if $s(u) = v$ and $0$ otherwise.
	
	Let $a$ be the generator of $A$ permuting the subtrees of the first level cyclically. We can write the adjacency matrix of this generator by blocks as
	\[
	a_0 = 1, \qquad
	a_n = 
	\left(
	\begin{array}{ccccc}
	0 & 1 & &   &  \\
	& 0 & 1 &   &  \\
	& &  \ddots & \ddots &   \\
	&  & & 0 & 1 \\
	1 &  &   & & 0
	\end{array}
	\right), \quad \forall n \ge 1.
	\]
	
	Now, for any $b\in B$ and $k \ge 0$, if we write the matrix
	\[
	b_{0,k} = 1 \qquad 
	b_{n,k} = 
	\left(
	\begin{array}{ccccc}
	\omega_k(b)_{n-1} &  &  &  &  \\
	& 1 &  &   &  \\
	&   & \ddots  &   &   \\
	&  &   & 1 &  \\
	&  &   &  & b_{n-1, k+1}
	\end{array}
	\right), \quad \forall k\ge0, 
	\]
	then the adjacency matrix of $b$ is $b_n = b_{n, 0}$.
	
	These matrices have size $d^n \times d^n$, and every block in the matrices above is a matrix of size $d^{n-1} \times d^{n-1}$. In order to simplify notation, from now on we will omit the identity matrix and write just the scalar multiplying it, as the dimensions should be clear from the context.
	
	Now notice that, for every $n \ge 0$, $\sum\limits_{i=1}^{d-1} a_n^i = A_n$. Similarly, we have $\sum\limits_{b \in B\setminus\{1\}}b_n = B_n$. Indeed,
	\[
	\sum\limits_{b \in B\setminus\{1\}}b_{n,k} = 
	\left(
	\begin{array}{ccccc}
	\sum\limits_{b \in B\setminus\{1\}}\omega_k(b)_{n-1} &  &  &  &  \\
	& d^m - 1 &  &   &  \\
	&   & \ddots  &   &   \\
	&  &   & d^m - 1 &  \\
	&  &   &  & \sum\limits_{b \in B\setminus\{1\}}b_{n-1,k+1}
	\end{array}
	\right) = \]
	\[ = \left(
	\begin{array}{ccccc}
	d^{m-1}A_{n-1} + d^{m-1} - 1 &  &  &  &  \\
	& d^m - 1 &  &   &  \\
	&   & \ddots  &   &   \\
	&  &   & d^m - 1 &  \\
	&  &   &  & \sum\limits_{b \in B\setminus\{1\}}b_{n-1,k+1}
	\end{array}
	\right).
	\]
	The sum in the first block does not depend on $k$, since $\omega_k$ is an epimorphism and all elements of $A$ have exactly $d^{m-1}$ preimages. Hence we can inductively conclude that $\sum\limits_{b \in B\setminus\{1\}}b_n = B_n$.
	
	Finally, the adjacency matrix of $\Gamma_n$ is $\sum\limits_{s \in S}s_n = \sum\limits_{i=1}^{d-1} a_n^i + \sum\limits_{b \in B\setminus\{1\}}b_n = A_n + B_n$.
\end{proof}

If we now try to find the characteristic polynomial of $\M_n$, we will not find any explicit relation with that of $\M_{n-1}$. Instead, we consider the matrix
\[
Q_n(\lambda, \mu) := B_n + \lambda A_n - \mu.
\]
These additional parameters will allow us to find a relation between the determinant of $Q_n(\lambda, \mu)$ and $Q_{n-1}(\lambda', \mu')$, for some different $\lambda'$ and $\mu'$. According to Lemma~\ref{lem:adjacency_matrix}, by setting $\lambda = 1, \mu = 0$, we recover the matrix of $\M_n$, so more specifically we want to find $\spec(\M_n) = \left\{ \mu \mid |Q_n(1, \mu)| = 0 \right\}$.

As mentioned above, the strategy consists of two steps. First, we will prove a relation between the determinants of $Q_n$ and $Q_{n-1}$ (Proposition~\ref{prop:recurrence}). Second, we will solve this recurrence to find a factorization of the determinant of $Q_n$ (Proposition~\ref{prop:factorization}).

Before, as our computations will involve matrices of the form $rA_n + s$, let us start with the following Lemma, which will be useful later on.

\begin{lem}
	\label{lem:rAns}
	Let $r, s, r', s' \in \R$. Then,
	
	\begin{enumerate}
		\item $A_n^2 = (d-2)A_n + d - 1$.
		\item $|rA_n + s| = \left[(s - r)^{d-1}(s + (d-1)r)\right]^{d^{n-1}}$.
		\item $(rA_n + s)^{-1} = \frac{rA_n - (d-2)r - s}{(r-s)(s + (d-1)r)}$.
		\item $(rA_n + s)(r'A_n + s') = [(d-2)rr' + rs' + r's]A_n + (d-1)rr' + ss'$.
	\end{enumerate}
\end{lem}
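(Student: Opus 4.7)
The key structural observation that drives all four parts is that $A_n$ has a simple Kronecker product description. Writing $J_d$ for the $d\times d$ all-ones matrix, the definition of $A_n$ as a $d\times d$ block matrix of scalar multiples of $I_{d^{n-1}}$ with $0$'s on the diagonal and $1$'s elsewhere is nothing but
\[
A_n = (J_d - I_d)\otimes I_{d^{n-1}}.
\]
Once this is in hand, every claim reduces to an elementary computation about the $d\times d$ matrix $C := J_d - I_d$, whose spectrum is well known: $d-1$ with multiplicity one (eigenvector $(1,\dots,1)^T$) and $-1$ with multiplicity $d-1$.

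My plan is to prove the four statements in the order (1), (4), (3), (2). For (1), compute $C^2 = (J_d-I_d)^2 = J_d^2 - 2J_d + I_d = dJ_d - 2J_d + I_d = (d-2)(J_d-I_d) + (d-1)I_d$ using $J_d^2 = dJ_d$; tensoring with $I_{d^{n-1}}$ immediately gives $A_n^2 = (d-2)A_n + (d-1)I_{d^n}$. (The scalar case $A_0 = d-1$ is the degenerate case and needs a quick separate check.) For (4), expand $(rA_n+s)(r'A_n+s') = rr'A_n^2 + (rs'+r's)A_n + ss'I$ and substitute (1) to absorb the $A_n^2$ term, yielding exactly the claimed identity.

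For (3), the natural ansatz is that the inverse is of the same form, $(rA_n+s)^{-1} = uA_n + v$, since the set of matrices $\{\alpha A_n + \beta I\}$ is an algebra closed under multiplication by (4). Apply (4) to $(rA_n+s)(uA_n+v)$ and equate to $I$: this gives the two-variable linear system $(d-2)ru + rv + us = 0$ and $(d-1)ru + sv = 1$, whose solution $u = r/[(r-s)(s+(d-1)r)]$, $v = -((d-2)r+s)/[(r-s)(s+(d-1)r)]$ is precisely the asserted formula. Finally, for (2), use the Kronecker description $rA_n + s = (rC + sI_d)\otimes I_{d^{n-1}}$. The $d\times d$ matrix $rC + sI_d$ has eigenvalues $s + (d-1)r$ (multiplicity one) and $s - r$ (multiplicity $d-1$), so its determinant is $(s+(d-1)r)(s-r)^{d-1}$; since tensoring with $I_{d^{n-1}}$ raises the determinant to the $d^{n-1}$-st power, we obtain the formula in (2).

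There is no real obstacle here; everything is linear algebra once one recognizes the Kronecker structure. The only place that requires any attention is making sure the scalar/base cases (e.g.\ $n=0$, or the degenerate situations $r=s$ or $s+(d-1)r=0$ in (3)) are handled correctly and noting that (3) tacitly assumes $rA_n + s$ is invertible, i.e., that neither factor in the denominator vanishes.
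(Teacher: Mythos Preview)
Your proof is correct and takes a genuinely different route from the paper. The paper proceeds by direct computation throughout: for (1) it simply counts entries in the square of $A_n$; for (2) it performs explicit row and column operations to reduce the determinant; for (3) it verifies the claimed inverse by multiplying out and invoking (1); and (4) is checked by expanding and using (1). Your approach instead identifies the Kronecker structure $A_n = (J_d - I_d)\otimes I_{d^{n-1}}$ and exploits the known spectrum of $J_d - I_d$. This makes (2) essentially trivial---no row reduction at all---and gives a conceptual reason why the algebra $\{\alpha A_n + \beta I\}$ is closed under inversion, which motivates the ansatz in (3). The paper's approach is more elementary (no eigenvalues, no Kronecker identities) and entirely self-contained; yours is shorter and explains \emph{why} the formulas look the way they do. Either is perfectly adequate for this auxiliary lemma.
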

\begin{proof}
	For $(1)$, if we square $A_n$ then we will get a sum of $d-1$ ones for elements in the diagonal and $d-2$ ones for the rest, which shows the claim.
	
	For $(2)$, we have
	\[
	|rA_n + s| = \left| \begin{array}{ccccc}
	s & r & \dots & r & r \\
	r & s & \dots & r  & r \\
	\vdots & \vdots  & \ddots  &  \vdots & \vdots  \\
	r & r & \dots & s & r \\
	r & r & \dots & r & s
	\end{array} \right| = \left| \begin{array}{ccccc}
	s-r & 0 & \dots & 0 & r-s \\
	0 & s-r & \dots & 0  & r-s \\
	\vdots & \vdots  & \ddots  &  \vdots & \vdots  \\
	0 & 0 & \dots & s-r & r-s \\
	r & r & \dots & r & s
	\end{array} \right|	= \]\[ =
	(s-r)^{(d-1)d^{n-1}} \left|s - \frac{(d-1)r(r-s)}{s-r} \right| = \left[(s - r)^{d-1}(s + (d-1)r)\right]^{d^{n-1}}.
	\]
	
	For $(3)$, we can verify, using $(1)$, that
	\[
	(rA_n + s)\left(rA_n - (d-2)r - s\right) = r^2A_n^2 - (d-2)r^2A_n - (d-2)rs - s^2 = \]\[
	= r^2\left((d-2)A_n + d -1\right) - (d-2)r^2A_n - (d-2)rs - s^2 = (r-s)(s + (d-1)r).
	\]
	
	Claim $(4)$ can be checked directly, again using $(1)$.
\end{proof}

\begin{prop}
\label{prop:Q0-Q1}

For $n = 0$ and $n = 1$, we have
\[
|Q_0(\lambda, \mu)| = \alpha + \lambda \quad \text{and} \quad
|Q_1(\lambda, \mu)| = (\alpha + \lambda)\beta^{d-1},
\]
where
\[
\alpha = \alpha(\lambda, \mu) := d^m - 1 - \mu + (d-2)\lambda
\]
and
\[
\beta = \beta(\lambda, \mu) := d^m - 1 - \mu - \lambda.
\]
\end{prop}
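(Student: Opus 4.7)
The proposition is a base case (for the recursion to come in Proposition~\ref{prop:recurrence}), so my plan is to compute $|Q_0|$ and $|Q_1|$ by direct expansion, using Lemma~\ref{lem:adjacency_matrix} to write down the matrices and Lemma~\ref{lem:rAns}(2) to evaluate the $n=1$ determinant in closed form.

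For $n=0$ everything is scalar. Lemma~\ref{lem:adjacency_matrix} gives $A_0 = d-1$ and $B_0 = d^m-1$, hence
\[
Q_0(\lambda,\mu) \;=\; (d^m - 1) + (d-1)\lambda - \mu \;=\; \bigl(d^m - 1 - \mu + (d-2)\lambda\bigr) + \lambda \;=\; \alpha + \lambda.
\]

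For $n=1$ the key observation is that all three types of diagonal entries of $B_1$ happen to coincide. Indeed, with $1\times 1$ blocks the first entry is $d^{m-1}A_0 + d^{m-1} - 1 = d^{m-1}(d-1) + d^{m-1} - 1 = d^m - 1$; each of the $d-2$ middle entries contributes $|B\setminus\{1\}| = d^m - 1$ (the sum $\sum_{b\in B\setminus\{1\}} 1$ from generators of $B$ that act trivially at the first level, as derived in the proof of Lemma~\ref{lem:adjacency_matrix}); and the last entry is $B_0 = d^m - 1$. Therefore $B_1 = (d^m - 1)\,I_d$, and
\[
Q_1(\lambda,\mu) \;=\; (d^m - 1 - \mu)\,I_d + \lambda\,A_1 \;=\; s\,I + r\,A_1,
\]
with $s := d^m - 1 - \mu$ and $r := \lambda$. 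Applying Lemma~\ref{lem:rAns}(2) (the outer exponent $d^{n-1}$ equals $1$ since $n=1$) gives
\[
|Q_1(\lambda,\mu)| \;=\; (s-r)^{d-1}\bigl(s + (d-1)r\bigr) \;=\; \beta^{d-1}(\alpha + \lambda),
\]
because $s - r = d^m - 1 - \mu - \lambda = \beta$ and $s + (d-1)r = d^m - 1 - \mu + (d-1)\lambda = \alpha + \lambda$.

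There is no real obstacle here: this is a routine base-case verification. The only step that requires a moment's care is checking that the three kinds of diagonal scalars making up $B_1$ collapse to the same value $d^m - 1$, for this is precisely what puts $Q_1$ in the form $rA_1 + sI$ to which Lemma~\ref{lem:rAns}(2) applies cleanly.
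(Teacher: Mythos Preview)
Your proof is correct and follows essentially the same approach as the paper: a direct scalar computation for $n=0$, and for $n=1$ the recognition that $B_1=(d^m-1)I_d$ so that $Q_1=\lambda A_1+(d^m-1-\mu)I$, followed by an application of Lemma~\ref{lem:rAns}(2). Your write-up is simply more explicit about why the three diagonal blocks of $B_1$ all collapse to the same scalar $d^m-1$ (and you correctly take the middle value $d^m-1$ from the proof of Lemma~\ref{lem:adjacency_matrix} rather than the typo $d^{m-1}-1$ in its statement).
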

\begin{proof}
By direct computation,
\[
\left|Q_0(\lambda, \mu)\right| = B_0 + \lambda A_0 - \mu = d^m - 1 - \mu + (d-1)\lambda = \alpha + \lambda,
\]
\[
\left|Q_1(\lambda, \mu)\right| = \left|B_1 + \lambda A_1 - \mu \right| = \left|\lambda A_1 + d^m - 1 - \mu \right| = \]\[ = (d^m - 1 - \mu + (d-1)\lambda)(d^m - 1 - \mu - \lambda)^{d-1} = (\alpha + \lambda)\beta^{d-1}.
\]
\end{proof}

We are now ready to compute the determinant of $Q_n(\lambda, \mu)$ for $n \ge 2$:

\begin{prop}
\label{prop:recurrence}

For $n \ge 2$, we have
\[
\left|Q_n(\lambda, \mu)\right| = (\alpha\beta^{d^2 - 3d + 1}\gamma^{d-1})^{d^{n-2}} \left|Q_{n-1}(\lambda', \mu')\right|,
\]
with $\alpha$ and $\beta$ as in Proposition~\ref{prop:Q0-Q1},
\[
\lambda' := \frac{d^{m-1}\beta}{\alpha\gamma}\lambda^2 \quad \text{and} \quad \mu' := \mu + \frac{(d-1)\delta}{\alpha\gamma}\lambda^2,
\]
where
\scriptsize
\[
\gamma = \gamma(\lambda, \mu) := \mu^2 - ((d-3)\lambda + d^m - 2)\mu - ((d-2)\lambda^2 + (d-3)\lambda + d^m - 1),
\]
\[
\delta = \delta(\lambda, \mu) := \mu^2 - ((d-3)\lambda + d^m + d^{m-1} - 2)\mu - ((d-2)\lambda^2 + (d^{m-1}+d-3)\lambda - d^{2m-1} + d^m + d^{m-1} - 1).
\]
\normalsize
\end{prop}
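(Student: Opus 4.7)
The plan is to apply a block Schur complement that isolates the $B_{n-1}$ block of $Q_n(\lambda,\mu)$ in the lower-right corner. Using Lemma~\ref{lem:adjacency_matrix}, write
\[
Q_n(\lambda,\mu) = \begin{pmatrix} M_1 & R \\ R^T & B_{n-1} - \mu \end{pmatrix},
\]
where $M_1$ is the $(d-1)\times(d-1)$ block matrix whose first diagonal block is $P := d^{m-1}A_{n-1} + d^{m-1}-1-\mu$, whose remaining $d-2$ diagonal blocks equal the scalar $C := d^m - 1 - \mu$, and whose off-diagonal blocks are $\lambda I$, while $R$ is the column of $d-1$ copies of $\lambda I$. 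The Schur complement formula then gives
\[
|Q_n(\lambda,\mu)| = |M_1|\cdot\bigl|\,B_{n-1} - \mu - R^T M_1^{-1} R\,\bigr|.
\]
Crucially, every block of $M_1$ and $R$ lies in the commutative algebra $\R[A_{n-1}] \subset M_{d^{n-1}}(\R)$, so all computations can be performed formally in that algebra, with Lemma~\ref{lem:rAns} providing the required inversion, multiplication and determinant identities.

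To compute $|M_1|$, I would perform a second, nested Schur complement on the middle $(d-2)\times(d-2)$ sub-block of $M_1$, which has the form $(C-\lambda)I + \lambda J$ with $J$ the all-ones matrix; since $\mathbf{1}$ is an eigenvector with eigenvalue $C+(d-3)\lambda$, this sub-block has determinant $(C-\lambda)^{(d-3)d^{n-1}}(C+(d-3)\lambda)^{d^{n-1}}$ and its inverse acts on constant vectors by $1/(C+(d-3)\lambda)$. The remaining Schur piece is $\tilde P := P - \tfrac{(d-2)\lambda^2}{C+(d-3)\lambda}$, an element of $\R[A_{n-1}]$ of the form $d^{m-1}A_{n-1} + s'$. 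Applying Lemma~\ref{lem:rAns}(2) and using the elementary factorisations $C-\lambda = \beta$, $C+(d-2)\lambda = \alpha$, $(1+\mu)(C+(d-3)\lambda)+(d-2)\lambda^2 = -\gamma$, and $C^2 + (d-3)\lambda C - (d-2)\lambda^2 = \alpha\beta$, the identities $s' - d^{m-1} = \gamma/(C+(d-3)\lambda)$ and $s' + (d-1)d^{m-1} = \alpha\beta/(C+(d-3)\lambda)$ emerge; the powers of $C+(d-3)\lambda$ then cancel, leaving $|M_1| = (\alpha\beta^{d^2-3d+1}\gamma^{d-1})^{d^{n-2}}$, exactly the scalar prefactor of the proposition.

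For the Schur complement $S := B_{n-1} - \mu - R^T M_1^{-1} R$, the symmetry among the middle $d-2$ blocks forces the solution $w$ of $M_1 w = R$ to have the form $(w_1, w_2, w_2, \ldots, w_2)^T$, reducing this to a $2\times 2$ linear system over $\R[A_{n-1}]$ whose unique solution is expressible via $\tilde P^{-1}$ (Lemma~\ref{lem:rAns}(3)). The scalar $R^T M_1^{-1} R = \lambda(w_1 + (d-2)w_2)$ is then a rational element of $\R[A_{n-1}]$ whose denominator, after using $(r'-s')(s'+(d-1)r') = -\alpha\beta\gamma/(C+(d-3)\lambda)^2$, becomes a multiple of $\alpha\beta\gamma$. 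Reducing the numerator via $A_{n-1}^2 = (d-2)A_{n-1}+(d-1)$, the coefficient of $A_{n-1}$ simplifies using $(d-2)\alpha - (d-1)(C+(d-3)\lambda) = -\beta$ to $-\tfrac{d^{m-1}\beta\lambda^2}{\alpha\gamma}$ after the expected $\beta$ cancellation, and the constant term simplifies using $\alpha\eta - d^{m-1}(C+(d-3)\lambda) = \delta$, where $\eta := 2d^{m-1}-1-\mu-\lambda$, to $\tfrac{(d-1)\delta\lambda^2}{\alpha\gamma}$. Consequently $S = B_{n-1} + \lambda' A_{n-1} - \mu' = Q_{n-1}(\lambda',\mu')$ with $\lambda',\mu'$ exactly as announced, so $|S| = |Q_{n-1}(\lambda',\mu')|$ and the recurrence follows.

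The main obstacle is the bookkeeping in this last step: $R^T M_1^{-1} R$ is naively a rational function in $A_{n-1}$ with a cumbersome denominator, and the clean polynomials $\gamma$ and $\delta$ materialise only after the correct non-obvious factorisations -- most importantly $C^2 + (d-3)\lambda C - (d-2)\lambda^2 = \alpha\beta$ and the $\delta$-identity above -- are recognised and combined with the quadratic relation for $A_{n-1}$. Once these are in place, every stray $\beta$ cancels and the advertised formulas for $\lambda'$, $\mu'$ and the scalar prefactor fall out directly.
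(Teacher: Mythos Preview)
Your approach is correct and is at heart the same Schur complement argument as the paper's, but the two proofs order the eliminations differently. The paper first uses elementary row and column operations to clear the middle $d-2$ scalar blocks, arriving at a $2\times 2$ block determinant
\[
(\beta^{d-3})^{d^{n-1}}\begin{vmatrix} C_{n-1} & \lambda^2 \\ D_{n-1} & B_{n-1}-\mu \end{vmatrix},
\]
with $C_{n-1}=d^{m-1}(\alpha-\lambda)(A_{n-1}+1)+\gamma$ and $D_{n-1}=(d-2)d^{m-1}(A_{n-1}-(d-1))+(d-1)\beta$, and then takes the Schur complement with respect to $C_{n-1}$; Lemma~\ref{lem:rAns} gives $|C_{n-1}|=(\alpha\beta\gamma^{d-1})^{d^{n-2}}$ and $D_{n-1}C_{n-1}^{-1}=-\tfrac{1}{\alpha\gamma}[d^{m-1}\beta A_{n-1}-(d-1)\delta]$ directly. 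You instead take the outer Schur complement with respect to the full $(d-1)\times(d-1)$ block $M_1$ first, then compute $|M_1|$ and $R^TM_1^{-1}R$ by a nested reduction exploiting the symmetry of the middle blocks. The two routes are algebraically equivalent (your $\tilde P$ is, up to a factor of $C+(d-3)\lambda$, the paper's $C_{n-1}$), and your key identities $C^2+(d-3)\lambda C-(d-2)\lambda^2=\alpha\beta$ and $\alpha\eta-d^{m-1}(C+(d-3)\lambda)=\delta$ are correct. The paper's ordering has the slight advantage that once $C_{n-1}$ and $D_{n-1}$ are written explicitly as elements of $\R[A_{n-1}]$, a single application of Lemma~\ref{lem:rAns}(3),(4) yields $\lambda'$ and $\mu'$ without solving a linear system, so the bookkeeping you flag as the main obstacle is somewhat lighter there.
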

\begin{proof}
We start computing the determinant of $|Q_n(\lambda, \mu)|$ directly, performing elementary transformations of rows and columns in determinants.
\[
\left|Q_n(\lambda, \mu)\right| = \left|\begin{array}{ccccc}
d^{m-1}A_{n-1} + d^{m-1} - 1 - \mu & \lambda & \dots & \lambda & \lambda \\
\lambda & d^m - 1 - \mu & \dots & \lambda & \lambda \\
\vdots & \vdots & \ddots & \vdots & \vdots \\
\lambda & \lambda & \dots & d^m - 1 - \mu & \lambda \\
\lambda & \lambda & \dots & \lambda & B_{n-1} - \mu
\end{array}\right| =
\]\[ =
\left|\begin{array}{ccc|cc}
\beta + \lambda & \dots & \lambda & \lambda & \lambda \\
\vdots & \ddots & \vdots & \vdots & \vdots \\
\lambda & \dots & \beta + \lambda & \lambda & \lambda \\ \hline
\lambda & \dots & \lambda & d^{m-1}A_{n-1} + d^{m-1} - 1 - \mu & \lambda \\
\lambda & \dots & \lambda & \lambda & B_{n-1} - \mu
\end{array}\right| =
\]\[
= \left|\begin{array}{ccc|cc}
\beta &  &  & \lambda + \mu + 1 - d^{m-1} - d^{m-1}A_{n-1} & 0 \\
& \ddots &  & \vdots & \vdots \\
&  & \beta & \lambda + \mu + 1 - d^{m-1} - d^{m-1}A_{n-1} & 0 \\ \hline
\lambda & \dots & \lambda & d^{m-1}A_{n-1} + d^{m-1} - 1 - \mu & \lambda \\
\lambda & \dots & \lambda & \lambda & B_{n-1} - \mu
\end{array}\right| =
\]\[
= \left|\begin{array}{ccc|cc}
\beta &  &  & \lambda + \mu + 1 - d^{m-1} - d^{m-1}A_{n-1} & 0 \\
& \ddots &  & \vdots & \vdots \\
&  & \beta & \lambda + \mu + 1 - d^{m-1} - d^{m-1}A_{n-1} & 0 \\ \hline
0 & \dots & 0 & d^{m-1}A_{n-1} + d^{m-1} - 1 - \mu - \frac{(d-2)\lambda(\lambda + \mu + 1 - d^{m-1} - d^{m-1}A_{n-1})}{\beta} & \lambda \\
0 & \dots & 0 & \lambda - \frac{(d-2)\lambda(\lambda + \mu + 1 - d^{m-1} - d^{m-1}A_{n-1})}{\beta} & B_{n-1} - \mu
\end{array}\right| =
\]\[
= (\beta^{d-2})^{d^{n-1}}\left|\begin{array}{cc}
d^{m-1}A_{n-1} + d^{m-1} - 1 - \mu - \frac{(d-2)\lambda(\lambda + \mu + 1 - d^{m-1} - d^{m-1}A_{n-1})}{\beta} & \lambda \\
\lambda\left(1 - \frac{(d-2)(\lambda + \mu + 1 - d^{m-1} - d^{m-1}A_{n-1})}{\beta}\right) & B_{n-1} - \mu
\end{array}\right| =
\]\scriptsize\[
= (\beta^{d-3})^{d^{n-1}}\left|\begin{array}{cc}
\beta(d^{m-1}A_{n-1} + d^{m-1} - 1 - \mu) - (d-2)\lambda(\lambda + \mu + 1 - d^{m-1} - d^{m-1}A_{n-1}) & \lambda^2 \\
\beta - (d-2)(\lambda + \mu + 1 - d^{m-1} - d^{m-1}A_{n-1}) & B_{n-1} - \mu
\end{array}\right| =
\]\normalsize\[
= (\beta^{d-3})^{d^{n-1}}\left|\begin{array}{cc}
d^{m-1}(\alpha - \lambda)(A_{n-1} + 1) + \gamma & \lambda^2 \\
(d-2)d^{m-1}(A_{n-1} - (d-1)) + (d-1)\beta & B_{n-1} - \mu
\end{array}\right|.
\]

We set for convenience
\[
C_n := d^{m-1}(\alpha - \lambda)(A_n + 1) + \gamma,
\]\[
D_n := (d-2)d^{m-1}(A_n - (d-1)) + (d-1)\beta.
\]
We continue the computation of the determinant by taking the first Schur complement. Namely, whenever $P$ is invertible, we have the equality $\left|\begin{array}{cc} P & Q \\ R & S \end{array}\right| = |P||S - RP^{-1}Q|$: 
\[
\left|Q_n(\lambda, \mu)\right| = (\beta^{d-3})^{d^{n-1}}\left|\begin{array}{cc}
C_{n-1}  & \lambda^2 \\
D_{n-1} & B_{n-1} - \mu
\end{array}\right| = 
\]\[ = (\beta^{d-3})^{d^{n-1}}\left|C_{n-1}\right|\left|B_{n-1} - \mu - \lambda^2 D_{n-1}C_{n-1}^{-1}\right|.
\]

Let us now compute these two determinants. First, using Lemma~\ref{lem:rAns} with $r = d^{m-1}(\alpha - \lambda)$ and $s = \gamma + d^{m-1}(\alpha - \lambda)$, we obtain
\[
\left| C_n \right| = \left[(s-r)^{d-1}(s + (d-1)r)\right]^{d^{n-1}} = (\alpha \beta \gamma^{d-1})^{d^{n-1}},
\]
as well as
\[
C_n^{-1} = \frac{-1}{\alpha\beta\gamma}\left[d^{m-1}(\alpha - \lambda)(A_n - (d-1)) - \gamma \right].
\]
Similarly, again by Lemma~\ref{lem:rAns} but now with
\[
\begin{array}{ll}
r = (d-2)d^{m-1}, & s = (d-1)\left(\beta - (d-2)d^{m-1}\right), \\
r' = d^{m-1}(\alpha - \lambda), & s' = - (\gamma + (d-1)d^{m-1}(\alpha - \lambda)),
\end{array}
\]
we find
\[
D_nC_n^{-1} = \frac{-1}{\alpha\gamma}\left[d^{m-1}\beta A_n - (d-1)\delta \right].
\]
Indeed,
\[
(d-2)rr' + rs' + r's =
\]\[
= (d-2)rr' - r(\gamma + (d-1)r') + r'(d-1)(\beta - r) =
\]\[
= -drr' - r\gamma + (d-1)\beta r' =
\]\[
= d^{m-1}\left[(d-1)\beta(\alpha - \lambda) - (d-2)(\gamma + d^m(\alpha - \lambda)) \right] =
\]\[
= d^{m-1}\beta[\alpha - (d-1)\lambda] = d^{m-1}\beta^2,
\]
and
\[
(d-1)rr' + ss' =
\]\[
= (d-1)rr' - (d-1)(\beta - r)(\gamma + (d-1)r') =
\]\[
= (d-1)(drr' + r\gamma - \beta(\gamma + (d-1)r')) =
\]\[
= (d-1)(r(\gamma + dr') - \beta(\gamma + (d-1)r')) =
\]\[
= (d-1)(r\alpha\beta - \beta(\gamma + (d-1)r')) =
\]\[
= -(d-1)\beta(\gamma + (d-1)d^{m-1}(\alpha - \lambda) - (d-2)d^{m-1}\alpha) =
\]\[
= -(d-1)\beta(\gamma + d^{m-1}(\alpha + (d-1)\lambda)) =
\]\[
= -(d-1)\beta(\gamma + d^{m-1}\beta) = -(d-1)\beta\delta.
\]

Therefore,
\[
B_{n-1} - \mu - \lambda^2 D_{n-1}C_{n-1}^{-1} =
\]\[
B_{n-1} - \mu + \frac{\lambda^2}{\alpha\gamma}\left[d^{m-1}\beta A_{n-1} - (d-1)\delta \right] =
\]\[
B_{n-1} + \frac{d^{m-1}\beta}{\alpha\gamma}\lambda^2 A_{n-1} - \left(\mu + \frac{(d-1)\delta}{\alpha\gamma}\lambda^2\right) =
\]\[
= B_{n-1} + \lambda' A_{n-1} - \mu' =
\]\[
= Q_{n-1}(\lambda', \mu').
\]

Finally, we conclude the computation of the determinant of $Q_n(\lambda, \mu)$:
\[
\left|Q_n(\lambda, \mu)\right| = (\beta^{d-3})^{d^{n-1}}\left|C_{n-1}\right|\left|B_{n-1} - \mu - \lambda^2 D_{n-1}C_{n-1}^{-1}\right| =
\]\[
= (\beta^{d-3})^{d^{n-1}} (\alpha \beta \gamma^{d-1})^{d^{n-2}} \left| Q_{n-1}(\lambda', \mu') \right| =
\]\[
= (\alpha \beta^{d^2 - 3d + 1} \gamma^{d-1})^{d^{n-2}} \left| Q_{n-1}(\lambda', \mu') \right|.
\]

\end{proof}

This concludes the first part of the strategy, finding a recurrence relation between the determinants of $Q_n(\lambda, \mu)$ and $Q_{n-1}(\lambda, \mu)$ via the Schur complement. For the next part, we need to unfold this recurrence relation to get a factorization of $|Q_n(\lambda, \mu)|$. Proposition~\ref{prop:Q0-Q1} provides it for $n = 0, 1$. Let us inductively compute it for $n \ge 2$.

\begin{prop}
\label{prop:Q2}

For $n = 2$, we have
\[
|Q_2(\lambda, \mu)| = (\alpha + \lambda)\beta^{(d-2)d+1}H_0^{d-1},
\]
where
\[
H_x := H_x(\lambda, \mu) = \mu^2 - ((d-2)\lambda + d^m - 2)\mu - ((d-1)\lambda^2 + (d^{m-1}x+d-2)\lambda + d^m - 1).
\]
\end{prop}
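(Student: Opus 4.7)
The plan is to evaluate the recurrence of Proposition~\ref{prop:recurrence} at $n=2$ and substitute the $n=1$ formula from Proposition~\ref{prop:Q0-Q1} for $|Q_1(\lambda',\mu')|$, giving
\[
|Q_2(\lambda,\mu)| = \alpha\,\beta^{d^2-3d+1}\gamma^{d-1}\cdot(\alpha(\lambda',\mu')+\lambda')\cdot\beta(\lambda',\mu')^{d-1}.
\]
The task then reduces to rewriting the two factors depending on the transformed pair $(\lambda',\mu')$ as polynomials in the original $(\lambda,\mu)$ and matching the claimed factorization $(\alpha+\lambda)\,\beta^{d^2-2d+1}H_0^{d-1}$.

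First I would expand $\alpha(\lambda',\mu')+\lambda' = d^m-1-\mu'+(d-1)\lambda'$ using the definitions of $\lambda'$ and $\mu'$; after grouping, the $\lambda^2/(\alpha\gamma)$-terms combine into $(d-1)(d^{m-1}\beta-\delta)/(\alpha\gamma)$. A direct comparison of the explicit formulas for $\beta$, $\gamma$, $\delta$ reveals the clean identity $d^{m-1}\beta-\delta = -\gamma$, which collapses the fraction and yields
\[
\alpha\cdot(\alpha(\lambda',\mu')+\lambda') = \alpha(d^m-1-\mu)-(d-1)\lambda^2 = (\alpha+\lambda)\beta,
\]
the last equality being a one-line check after setting $X=d^m-1-\mu$. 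This accounts for the $(\alpha+\lambda)$ factor and boosts the power of $\beta$ by one.

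The analogous manipulation on $\beta(\lambda',\mu')$ leaves, after clearing denominators, the polynomial identity
\[
\alpha\gamma(d^m-1-\mu)-\lambda^2\bigl((d-1)\delta+d^{m-1}\beta\bigr) = \alpha\beta\,H_0,
\]
which is equivalent to $\gamma\cdot\beta(\lambda',\mu')=\beta\,H_0$. I would verify it by expanding both sides as polynomials in $\mu$ with coefficients in $\R[\lambda]$ and matching coefficients; alternatively one can exploit the auxiliary identity $\alpha\beta-\gamma=d^m(\alpha-\lambda)$, which rewrites $H_0 = \beta(\alpha+\lambda)-d^m\alpha$ and reduces the claim to a relation between the four quantities $\alpha,\beta,\gamma,\delta$ alone. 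Once this is established, raising to the $(d-1)$-st power gives $\gamma^{d-1}\beta(\lambda',\mu')^{d-1} = \beta^{d-1}H_0^{d-1}$, and substituting both identities back collects the powers of $\beta$ as $\beta^{(d^2-3d+1)+1+(d-1)} = \beta^{d^2-2d+1} = \beta^{(d-2)d+1}$, which is the claimed factorization.

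The main obstacle is the second identity: although not conceptually difficult, its verification requires careful bookkeeping, since each of $\alpha$, $\beta$, $\gamma$, $\delta$, $H_0$ carries its own mixture of $1$, $d^{m-1}$, $d^m$ and $d^{2m-1}$ terms, and the resulting polynomial of total degree four in $(\lambda,\mu)$ must match coefficient-by-coefficient on both sides.
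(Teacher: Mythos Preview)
Your proposal is correct and follows essentially the same approach as the paper: apply the recurrence of Proposition~\ref{prop:recurrence} at $n=2$, plug in $|Q_1(\lambda',\mu')|=(\alpha'+\lambda')\beta'^{\,d-1}$, and reduce via the two identities $\alpha'+\lambda'=\tfrac{\beta}{\alpha}(\alpha+\lambda)$ and $\beta'=\tfrac{\beta}{\gamma}H_0$. The paper simply asserts these two relations and leaves their verification to the reader, whereas you supply the intermediate identities $d^{m-1}\beta-\delta=-\gamma$ and $\alpha\beta-\gamma=d^m(\alpha-\lambda)$ that make the check transparent; both are correct and your bookkeeping of the $\beta$-exponent $(d^2-3d+1)+1+(d-1)=(d-2)d+1$ is accurate.
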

\begin{proof}
Let $\alpha' := \alpha(\lambda', \mu')$ and $\beta' := \beta(\lambda', \mu')$ following the definition in Proposition~\ref{prop:Q0-Q1}. Then, by that Proposition and Proposition~\ref{prop:recurrence},
\[
\left|Q_2(\lambda, \mu)\right| = \alpha\beta^{d^2 - 3d + 1}\gamma^{d-1}\left|Q_1(\lambda', \mu')\right| = \alpha\beta^{d^2 - 3d + 1}\gamma^{d-1}(\alpha' + \lambda')\beta'^{d-1}.
\]

We can verify the following relations
\[
\alpha' + \lambda' = \frac{\beta}{\alpha}(\alpha + \lambda), \quad
\beta' = \frac{\beta}{\gamma}H_0.
\]
Therefore,
\[
\left|Q_2(\lambda, \mu)\right| = \alpha\beta^{d^2 - 3d + 1}\gamma^{d-1}\frac{\beta}{\alpha}(\alpha + \lambda)\left(\frac{\beta}{\gamma}H_0\right)^{d-1} =
\]
\[
= (\alpha + \lambda)\beta^{(d-2)d + 1}H_0^{d-1}.
\]
\end{proof}

The motivation for the definition of the polynomials $H_x$ from Proposition~\ref{prop:Q2} will become apparent in Proposition~\ref{prop:factorization}. They form a family of polynomials in $\lambda$ and $\mu$ indexed by the point $x \in \R$. For different values of $x \in \R$, the equation $H_x = 0$ defines different hyperbolas in $\lambda$ and $\mu$.

\begin{prop}
\label{prop:factorization}

For any $n \ge 2$, we have the factorization
\[
\left|Q_n(\lambda, \mu)\right| = (\alpha + \lambda)\beta^{(d-2)d^{n-1} + 1}\prod_{k=0}^{n-2}\prod_{x\in F^{-k}(0)} H_x^{(d-2)d^{n-k-2} + 1},
\]
with $\alpha$ and $\beta$ as in Proposition~\ref{prop:Q0-Q1}, $H_x$ as in Proposition~\ref{prop:Q2} and $F$ being the map
\[
F(x) = x^2 - d(d-1).
\]
\end{prop}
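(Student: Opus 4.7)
The plan is to proceed by induction on $n \ge 2$, using the recurrence of Proposition~\ref{prop:recurrence} to pass from level $n-1$ to level $n$. The base case $n=2$ is precisely Proposition~\ref{prop:Q2}. For $n \ge 3$, assuming the factorization holds at level $n-1$, I would substitute the inductive formula for $|Q_{n-1}(\lambda',\mu')|$ into
\[
|Q_n(\lambda,\mu)| = (\alpha\beta^{d^2-3d+1}\gamma^{d-1})^{d^{n-2}}\,|Q_{n-1}(\lambda',\mu')|.
\]

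This requires understanding how each of the basic factors $\alpha+\lambda$, $\beta$, and $H_x$ transforms under the substitution $(\lambda,\mu)\mapsto(\lambda',\mu')$. The first two transformations, namely $\alpha(\lambda',\mu')+\lambda' = \frac{\beta}{\alpha}(\alpha+\lambda)$ and $\beta(\lambda',\mu') = \frac{\beta}{\gamma}H_0$, are already verified inside the proof of Proposition~\ref{prop:Q2}. What is needed in addition is an analogous identity for $H_x$, of the shape
\[
H_x(\lambda',\mu') \;=\; \frac{R(\alpha,\beta,\gamma)}{(\alpha\gamma)^2}\,\prod_{y\in F^{-1}(x)}H_y(\lambda,\mu)
\]
for an explicit monomial $R$ in $\alpha,\beta,\gamma$. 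This identity is the algebraic reason why the quadratic map $F(x)=x^2-d(d-1)$ appears. It is plausible because $H_x$ is affine in $x$ (the only $x$-dependent term is $-d^{m-1}x\lambda\mu$ from its definition), so the product $\prod_{y\in F^{-1}(x)}H_y$ is symmetric in the two roots $\pm\sqrt{x+d(d-1)}$ of $F(y)=x$ and hence, by Vieta, depends polynomially on $x$; one verifies the precise form by substituting the expressions for $\lambda',\mu'$ from Proposition~\ref{prop:recurrence}, expanding, and matching coefficients in $\lambda,\mu,x$.

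Once these three substitution identities are in hand, the inductive step reduces to bookkeeping. The product $\prod_{k=0}^{n-3}\prod_{x\in F^{-k}(0)}H_x(\lambda',\mu')^{(d-2)d^{n-k-3}+1}$ coming from $|Q_{n-1}(\lambda',\mu')|$ is re-indexed, via the $2$-to-$1$ correspondence $F^{-(k+1)}(0)\leftrightarrow\{(x,y):x\in F^{-k}(0),\,y\in F^{-1}(x)\}$ and the shift $k\mapsto k+1$, into $\prod_{k'=1}^{n-2}\prod_{y\in F^{-k'}(0)}H_y^{(d-2)d^{n-k'-2}+1}$, the desired range with the correct exponent. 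The factor $H_0^{(d-2)d^{n-2}+1}$ produced by $(\beta')^{(d-2)d^{n-2}+1}$ fills the missing $k'=0$ term. What remains is to verify that the accumulated powers of $\alpha$, $\beta$, and $\gamma$ collapse: the prefactor $(\alpha\beta^{d^2-3d+1}\gamma^{d-1})^{d^{n-2}}$ must cancel all the $\alpha$ and $\gamma$ coming from denominators in the substitution identities, leaving exactly $(\alpha+\lambda)\,\beta^{(d-2)d^{n-1}+1}$ multiplying the $H_x$ product.

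The main obstacle is the derivation of the substitution identity for $H_x(\lambda',\mu')$ with the precise monomial $R(\alpha,\beta,\gamma)$: this is a lengthy but routine polynomial calculation using the explicit formulas for $\lambda',\mu',\gamma,\delta$ from Proposition~\ref{prop:recurrence}. Everything else is careful exponent accounting that will succeed only because the constants $d^2-3d+1$, $d-1$, and $(d-2)d^{n-k-2}+1$ have been tuned precisely to the two-fold branching of $F^{-1}$ at each level.
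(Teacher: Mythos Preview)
Your approach is exactly that of the paper: induction on $n$ with base case $n=2$, the recurrence of Proposition~\ref{prop:recurrence}, and the key substitution identity for $H_x$, which the paper states (without derivation) as $H_x(\lambda',\mu') = \dfrac{\beta}{\alpha\gamma}\prod_{y\in F^{-1}(x)}H_y$ (so in your notation $R=\alpha\beta\gamma$), followed by the same re-indexing and exponent bookkeeping you describe. One small slip: the only $x$-dependent term in $H_x$ is $-d^{m-1}x\lambda$, not $-d^{m-1}x\lambda\mu$.
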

\begin{proof}
The case $n=2$ is shown in Proposition~\ref{prop:Q2}. We use again the recurrence in Proposition~\ref{prop:recurrence} to show the result for $n \ge 3$ inductively. Let $H_x' := H_x(\lambda', \mu')$. We can verify
\[
H'_x = \frac{\beta}{\alpha\gamma}\prod_{y \in F^{-1}(x)}H_y.
\]
Using this relation and the fact that, for any $k \ge 0$, $|F^{-k}(0)| = 2^k$, we have
\[
\left|Q_n(\lambda, \mu)\right| = \left(\alpha\beta^{d^2 - 3d + 1}\gamma^{d-1}\right)^{d^{n-2}}\left|Q_{n-1}(\lambda', \mu')\right| =
\]\[
= \left(\alpha\beta^{d^2 - 3d + 1}\gamma^{d-1}\right)^{d^{n-2}}(\alpha' + \lambda')\beta'^{(d-2)d^{n-2} + 1} \prod_{k=0}^{n-3}\prod_{x \in F^{-k}(0)} H_x'^{(d-2)d^{n-k-3} + 1} =
\]\scriptsize\[
= \left(\alpha\beta^{d^2 - 3d + 1}\gamma^{d-1}\right)^{d^{n-2}}\frac{\beta}{\alpha}(\alpha + \lambda)\left(\frac{\beta}{\gamma}H_0\right)^{(d-2)d^{n-2} + 1}\prod_{k=0}^{n-3}\prod_{x \in F^{-k}(0)} \left(\frac{\beta}{\alpha\gamma}\prod_{y \in F^{-1}(x)}H_y\right)^{(d-2)d^{n-k-3} + 1} =
\]\tiny\[
= (\alpha\gamma)^{d^{n-2}-1}\beta^{(d^2 - 2d - 1)d^{n-2} + 2}(\alpha + \lambda) H_0^{(d-2)d^{n-2} + 1}\prod_{k=0}^{n-3}\left(\frac{\beta}{\alpha\gamma}\right)^{2^k((d-2)d^{n-k-3} + 1)}\prod_{x \in F^{-(k+1)}(0)}H_x^{(d-2)d^{n-k-3} + 1} =
\]\scriptsize\[
= (\alpha\gamma)^{d^{n-2}-1}\beta^{(d^2 - 2d - 1)d^{n-2} + 2}(\alpha + \lambda) H_0^{(d-2)d^{n-2} + 1}\left(\frac{\beta}{\alpha\gamma}\right)^{d^{n-2} - 1}\prod_{k=1}^{n-2}\prod_{x\in F^{-k}(0)}H_x^{(d-2)d^{n-k-2} + 1} =
\]\normalsize\[
= (\alpha + \lambda)\beta^{(d^2 - 2d)d^{n-2} + 1} \prod_{k=0}^{n-2}\prod_{x\in F^{-k}(0)}H_x^{(d-2)d^{n-k-2} + 1} = \]\[ =
(\alpha + \lambda)\beta^{(d - 2)d^{n-1} + 1} \prod_{k=0}^{n-2}\prod_{x\in F^{-k}(0)}H_x^{(d-2)d^{n-k-2} + 1}.
\]
\end{proof}

The relation between the determinants of $Q_n(\lambda, \mu)$ and $Q_{n-1}(\lambda', \mu' )$ is given by the substitution $\lambda \mapsto \lambda'$, $\mu \mapsto \mu'$. For $Q_2$, one of the factors of the determinant is the polynomial we called $H_0$. To compute the determinant of $Q_3$, we have to develop $H_0'$. It is in this analysis that the polynomials $H_x$ and the map $F$ arise. They are the link between $H_x'$ and $H_y$ that allows us to unfold the recurrence.

From the factorization in Proposition~\ref{prop:factorization} we can extract $\spec(M_n)$, as we mentioned above, by setting $\lambda = 1$. Recall that $|S| = d^m + d - 2$.

\begin{thm}
	\label{thm:finite_spec}
	
	We have $\spec(M_0) = \{1\}$, $\spec(M_1) = \{1, \frac{|S| - d}{|S|}\}$ and, for $n \ge 2$,
	\[
	\spec(M_n) = \left\lbrace1, \frac{|S| - d}{|S|}\right\rbrace \bigcup \psi^{-1}\left(\bigcup_{k=0}^{n-2}F^{-k}(0)\right),
	\]
	where $F(x) = x^2 - d(d-1)$ and $\psi(t) = \frac{1}{d^{m-1}}(|S|^2t^2 - |S|(|S|-2)t - (|S| + d-2))$.
\end{thm}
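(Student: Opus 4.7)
The plan is to read off $\spec(M_n)$ directly from the factorization in Proposition~\ref{prop:factorization}, using the identity $Q_n(1, \mu) = A_n + B_n - \mu I = \tilde{M}_n - \mu I$, so that $\mu \in \spec(\tilde{M}_n)$ if and only if $|Q_n(1, \mu)| = 0$. The spectrum of $M_n = \tilde{M}_n/|S|$ is then recovered via the normalization $t = \mu/|S|$, and I would treat the three types of factors appearing in the product separately.

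For the linear factors, setting $\lambda = 1$ in $\alpha + \lambda$ gives $|S| - \mu$, with root $\mu = |S|$, i.e.\ $t = 1$. Similarly $\beta|_{\lambda = 1} = |S| - d - \mu$ yields $\mu = |S| - d$, i.e.\ $t = (|S| - d)/|S|$. Both factors appear with positive exponent in the factorization for every $n \ge 1$, so both values always belong to $\spec(M_n)$.

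For the quadratic factors, I would substitute $\mu = |S|t$ into $H_x(1, \mu) = 0$. A direct expansion yields
\[
|S|^2 t^2 - |S|(|S| - 2) t - (|S| + d - 2) = d^{m-1} x,
\]
which is exactly $\psi(t) = x$. Hence the two $\mu$-roots of $H_x(1, \mu) = 0$ correspond bijectively under normalization to the two elements of $\psi^{-1}(x)$; letting $x$ range over $\bigcup_{k=0}^{n-2} F^{-k}(0)$ produces the piece $\psi^{-1}\bigl(\bigcup_{k=0}^{n-2} F^{-k}(0)\bigr)$ of the spectrum. The base cases $n = 0$ and $n = 1$ are read directly off Proposition~\ref{prop:Q0-Q1}: $|Q_0(1, \mu)| = |S| - \mu$ gives $\spec(M_0) = \{1\}$, and $|Q_1(1, \mu)| = (|S| - \mu)(|S| - d - \mu)^{d-1}$ gives $\spec(M_1) = \{1, (|S| - d)/|S|\}$, consistent with an empty product over $0 \le k \le -1$ when $n = 1$.

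There is no serious obstacle at this stage, the heavy lifting having been done in Propositions~\ref{prop:recurrence} and~\ref{prop:factorization}. The only care needed is a brief dimension check: the total degree in $\mu$ of the factorization must equal $d^n$, which follows from a short geometric-series identity applied to the exponents $1 + \bigl((d-2)d^{n-1}+1\bigr) + \sum_{k=0}^{n-2} 2 \cdot 2^k \bigl((d-2)d^{n-k-2}+1\bigr)$, confirming that no eigenvalue has been missed and that the listed set is indeed the full spectrum.
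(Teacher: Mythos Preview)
Your proposal is correct and follows essentially the same approach as the paper: both read off the roots of $|Q_n(1,\mu)|$ from the factorization in Proposition~\ref{prop:factorization}, treating the factors $\alpha+\lambda$, $\beta$, and $H_x$ separately, and then normalize by $|S|$. Your explicit verification that $H_x(1,|S|t)=0$ is equivalent to $\psi(t)=x$, together with the added degree count, are welcome elaborations but do not change the argument.
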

\begin{proof}
	We already established that $\spec(M_n) = \{\frac{\mu}{|S|} \mid \left|Q_n(1, \mu)\right| = 0\}$, and by Proposition~\ref{prop:factorization}, the determinant only vanishes in the following cases:
	\begin{itemize}
		\item $\alpha + 1 = 0 \implies \mu = |S| \implies \frac{\mu}{|S|} = 1$ , with multiplicity $1$.
		\item $\beta = 0 \implies \mu = d^m - 2 \implies \frac{\mu}{|S|} = \frac{|S| - d}{|S|}$, with multiplicity $(d-2)d^{n-1} + 1$.
		\item $H_x = 0$, for some $x\in F^{-k}(0)$ with $0 \le k \le n-2$. This implies that $\mu = \frac{|S| - 2}{2} \pm \sqrt{\left(\frac{d^m + d}{2}\right)^2 - d^m + d^{m-1}x}$, each with multiplicity $(d-2)d^{n-k-2} + 1$. Equivalently, $\frac{\mu}{|S|}$ is one of the two preimages of $x$ by the map $\psi$ defined above.
	\end{itemize}
\end{proof}

\section{Spectra of infinite Schreier graphs}
\label{sec:infinite}

Once we have found $\spec(M_n)$ in Theorem~\ref{thm:finite_spec}, we can prove Theorem~\ref{thm:spectrum_schreier} using the relation $\spec(M_\xi) = \overline{\bigcup\limits_{n\ge0} \spec(M_n)}$, for any $\xi \in X^\N$. In particular, the spectrum does not depend on the choice of $\xi$. The inclusion $\subseteq$ follows from weak containment of representations, see~\cite{dixmier}, Theorem 3.4.9. The equality holds if $\Gamma$ is amenable, which is our case, because our graphs are of polynomial growth (see~\cite{B12}), and hence amenable.

\begin{center}
	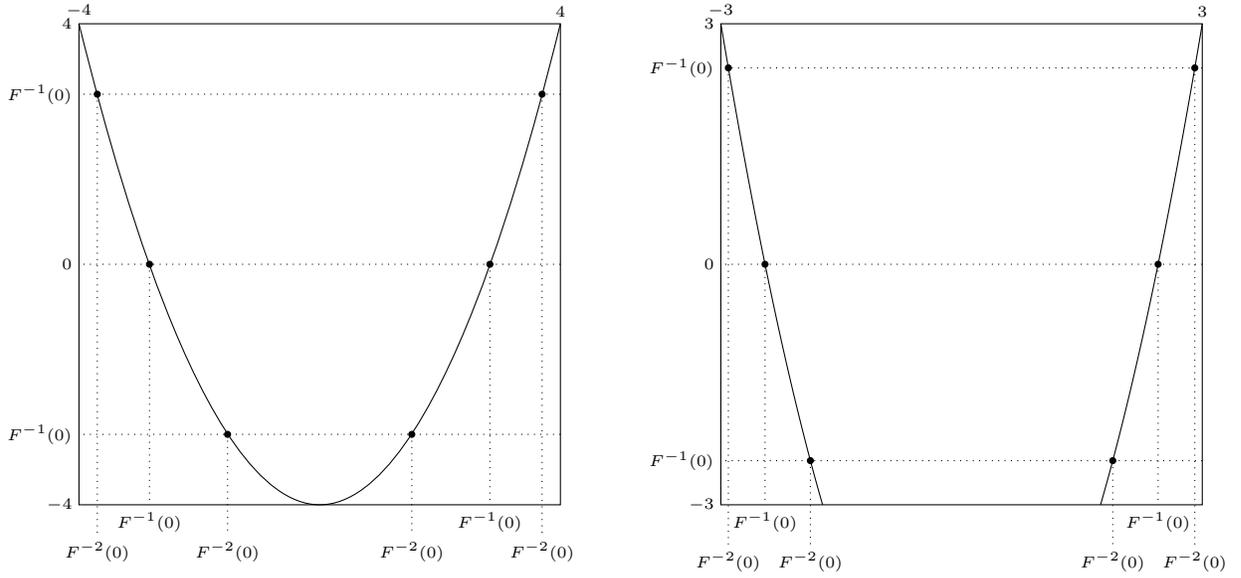
\begin{figure}[h]
		\Tiny
		\begin{tikzpicture}[scale=0.8]
		\begin{scope}[shift={(0,0)}]
		\draw (-4, -4) rectangle (4, 4);
		\draw (0, -4) parabola (-4, 4);
		\draw (0, -4) parabola (4, 4);
		
		\draw (-4, 4) node[above] {$-4$};
		\draw (4, 4) node[above] {$4$};
		\draw (-4, -4) node[left] {$-4$};
		\draw (-4, 4) node[left] {$4$};
		
		\draw[dotted] (4, 0) -- (-4, 0) node[left] {$0$};
		
		\draw[fill] (-2.82842712475,0) circle (0.05);
		\draw[fill] (2.82842712475,0) circle (0.05);
		
		\draw[dotted] (-2.82842712475, 0) -- (-2.82842712475, -4) node[below] {$F^{-1}(0)$};
		\draw[dotted] (2.82842712475, 0) -- (2.82842712475, -4) node[below] {$F^{-1}(0)$};
		
		\draw[dotted] (4, 2.82842712475) -- (-4, 2.82842712475) node[left] {$F^{-1}(0)$};
		\draw[dotted] (4, -2.82842712475) -- (-4, -2.82842712475) node[left] {$F^{-1}(0)$};
		
		\draw[dotted] (-3.69551813005, 2.82842712475) -- (-3.69551813005, -4.5) node[below] {$F^{-2}(0)$};
		\draw[dotted] (-1.53073372946, -2.82842712475) -- (-1.53073372946, -4.5) node[below] {$F^{-2}(0)$};
		
		\draw[dotted] (1.53073372946, -2.82842712475) -- (1.53073372946, -4.5) node[below] {$F^{-2}(0)$};
		\draw[dotted] (3.69551813005, 2.82842712475) -- (3.69551813005, -4.5) node[below] {$F^{-2}(0)$};
		
		\draw[fill] (-1.53073372946, -2.82842712475) circle (0.05);
		\draw[fill] (1.53073372946, -2.82842712475) circle (0.05);
		\draw[fill] (-3.69551813005, 2.82842712475) circle (0.05);
		\draw[fill] (3.69551813005, 2.82842712475) circle (0.05);
		\end{scope}
		
		
		\begin{scope}[scale = 4/3, shift={(8,0)}]
		\draw (-3, -3) rectangle (3, 3);
		\draw[smooth, domain=-3:-1.73205080757] plot (\x, {\x*\x - 6});
		\draw[smooth, domain=1.73205080757:3] plot (\x, {\x*\x - 6});
		
		\draw (-3, 3) node[above] {$-3$};
		\draw (3, 3) node[above] {$3$};
		\draw (-3, -3) node[left] {$-3$};
		\draw (-3, 3) node[left] {$3$};
		
		\draw[dotted] (3, 0) -- (-3, 0) node[left] {$0$};
		
		\draw[fill] (-2.44948974278, 0) circle (3*0.05/4);
		\draw[fill] (2.44948974278, 0) circle (3*0.05/4);
		
		\draw[dotted] (-2.44948974278, 0) -- (-2.44948974278, -3) node[below] {$F^{-1}(0)$};
		\draw[dotted] (2.44948974278, 0) -- (2.44948974278, -3) node[below] {$F^{-1}(0)$};
		
		\draw[dotted] (3, -2.44948974278) -- (-3, -2.44948974278) node[left] {$F^{-1}(0)$};
		\draw[dotted] (3, 2.44948974278) -- (-3, 2.44948974278) node[left] {$F^{-1}(0)$};
		
		\draw[fill] (-2.90680060252, 2.44948974278) circle (3*0.05/4);
		\draw[fill] (2.90680060252, 2.44948974278) circle (3*0.05/4);
		\draw[fill] (-1.88427977148, -2.44948974278) circle (3*0.05/4);
		\draw[fill] (1.88427977148, -2.44948974278) circle (3*0.05/4);
		
		\draw[dotted] (-2.90680060252, 2.44948974278) -- (-2.90680060252, -3.5) node[below] {$F^{-2}(0)$};
		\draw[dotted] (2.90680060252, 2.44948974278) -- (2.90680060252, -3.5) node[below] {$F^{-2}(0)$};
		\draw[dotted] (-1.88427977148, -2.44948974278) -- (-1.88427977148, -3.5) node[below] {$F^{-2}(0)$};
		\draw[dotted] (1.88427977148, -2.44948974278) -- (1.88427977148, -3.5) node[below] {$F^{-2}(0)$};
		\end{scope}
		\end{tikzpicture}
		
		\caption{Illustration of $F^{-k}(0)$ for $k = 0, 1, 2$, for the case $d=2$ (left, $F(x) = x^2 - 2$) and $d=3$ (right, $F(x) = x^2 - 6$). The set of all preimages is dense in the interval for $d=2$ but accumulates on a Cantor set for $d=3$.}
		\label{fig:parabolas}
	\end{figure}
\end{center}

\spectrumschreier*
\begin{proof}
	The first statement is an immediate consequence of Theorem~\ref{thm:finite_spec}, as explained above. We just remark that $1$ is obtained as a preimage by $\psi$ of $d$, the limit of the sequence $(F^{-k}_1(0))_k$, where $F_1^{-1}$ is the positive branch of the inverse. For $d = 2$, the map $F$ is $x^2 - 2$, whose Julia set is the interval $[-2, 2]$, and $\psi(t) = 2^{m+1}t^2 - (2^{m+1} - 4)t - 2$. We can find the preimages $t$ of any $x \in [-2, 2]$ by $\psi$:
	\[
	x = 2^{m+1}t^2 - (2^{m+1} - 4)t - 2 \implies 2^{m+1}t^2 - (2^{m+1} - 4)t - (2 + x) = 0 \implies
	\]\[
	\implies t = \frac{2^{m-1} - 1}{2^m} \pm \frac{1}{2^m}\sqrt{4^{m-1} + 1 + 2^{m-1}x}.
	\]
	And so
	\[
	t \in \frac{1}{2^m} \left(2^{m-1} - 1 \pm \sqrt{4^{m-1} + 1 + 2^{m-1}[-2,2]}\right) = \frac{1}{2^m} \left(2^{m-1} - 1 \pm \sqrt{[(2^{m-1} - 1)^2, (2^{m-1} + 1)^2]}\right) = \]\[ = \frac{1}{2^m} \left(2^{m-1} - 1 \pm [2^{m-1} - 1, 2^{m-1} + 1]\right) = \frac{1}{2^m} \left([-2, 0]\cup[2^m - 2, 2^m]\right) = [-\frac{1}{2^{m-1}}, 0]\cup[1 - \frac{1}{2^{m-1}}, 1].
	\]
	
	If $d \ge 3$, then the Julia set of $F$ is a Cantor set of zero Lebesgue measure, and its two preimages by $\psi$ are still Cantor sets lying on the different sides of the minimal point of $\psi$, $\frac{|S| - 2}{2|S|}$.
	
	This completes the proof. Intuitively, we can regard $\spec^0(\Gamma)$ as two infinite trees (one for each branch of $\psi^{-1}$), and $\spec^\infty(\Gamma)$ as their boundary.
\end{proof}

The map $\psi$ is symmetric about its minimal point $v = \frac{|S|-2}{2|S|}$, and satisfies
\[
\psi^{-1}(d) = \left\{1, -\frac{2}{|S|}\right\}, \qquad \psi^{-1}(-d) = \left\{ v \pm \frac{\sqrt{(|S| + 2)^2 - 8(|S| - d+2)}}{2|S|}\right\},
\]\[
\psi^{-1}(-d(d-1)) = \left\{ \frac{|S|-d}{|S|}, \frac{d-2}{|S|}\right\}.
\]
For example, for $m=1$, we have $\psi(t) = 4(d-1)^2t^2 - 4(d-1)(d-2)t - (3d-4)$, and
\[
\spec(M_\xi) = \psi^{-1}\left(\overline{\bigcup_{n \ge 0}F^{-n}(0)}\right),
\]
for any $\xi \in X^\N$, as in this case $\frac{|S|-d}{|S|}$ belongs to the preimage of the Julia set by $\psi$.

In the proof of Theorem~\ref{thm:spectrum_schreier} we actually found the explicit multiplicities of the eigenvalues of $M_n$. Before moving on to study the Kesten spectral measures, let us use these multiplicities to compute the density of states (empirical measure) of the graphs $\{\Gamma_n\}_n$ (see the definition just before Theorem~\ref{thm:KNS_spectral_measure} in the Introduction). It represents the spatial averaging of the Kesten measures.


\begin{center}
	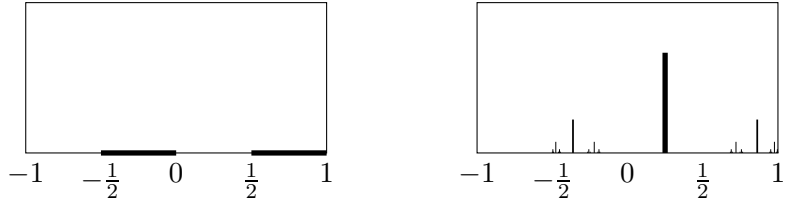
\begin{figure}[h]
		\begin{tikzpicture}
			\begin{scope}[scale=2, shift={(0,0)}]
			\draw (-1, 0) rectangle (1,1);
			
			\draw (-1, 0) node[below] {$-1$};
			\draw (-0.5, 0) node[below] {$-\frac{1}{2}$};
			\draw (0, 0) node[below] {$0$};
			\draw (0.5, 0) node[below] {$\frac{1}{2}$};
			\draw (1, 0) node[below] {$1$};
		
			\draw[line width=2] (-0.5, 0) -- (0, 0);
			\draw[line width=2] (0.5, 0) -- (1, 0);
			\end{scope}

			\begin{scope}[scale=2, shift={(3,0)}]
			\draw (-1, 0) rectangle (1,1);
			
			\draw (-1, 0) node[below] {$-1$};
			\draw (-0.5, 0) node[below] {$-\frac{1}{2}$};
			\draw (0, 0) node[below] {$0$};
			\draw (0.5, 0) node[below] {$\frac{1}{2}$};
			\draw (1, 0) node[below] {$1$};
			
			\pgfmathsetmacro{\mu}{1/3}
			\pgfmathsetmacro{\w}{6*\mu}
			\pgfmathsetmacro{\h}{2*\mu}
			
			\draw[line width=\w] (0.25, 0) -- (0.25, \h);
			
			\pgfmathsetmacro{\mu}{1/9}
			\pgfmathsetmacro{\w}{6*\mu}
			\pgfmathsetmacro{\h}{2*\mu}
			
			\draw[line width=\w] (0.86237243569, 0) -- (0.86237243569, \h);
			\draw[line width=\w] (-0.36237243569, 0) -- (-0.36237243569, \h);
			
			\pgfmathsetmacro{\mu}{1/27}
			\pgfmathsetmacro{\w}{6*\mu}
			\pgfmathsetmacro{\h}{2*\mu}
			
			\draw[line width=\w] (0.97670015062, 0) -- (0.97670015062, \h);
			\draw[line width=\w] (0.72106994287, 0) -- (0.72106994287, \h);
			\draw[line width=\w] (-0.47670015062, 0) -- (-0.47670015062, \h);
			\draw[line width=\w] (-0.22106994287, 0) -- (-0.22106994287, \h);
			
			\pgfmathsetmacro{\mu}{1/81}
			\pgfmathsetmacro{\h}{2*\mu}
			
			\draw[line width=\w] (0.99610658599, 0) -- (0.99610658599, \h);
			\draw[line width=\w] (0.95197399219, 0) -- (0.95197399219, \h);
			\draw[line width=\w] (0.75718094826, 0) -- (0.75718094826, \h);
			\draw[line width=\w] (0.68968734612, 0) -- (0.68968734612, \h);
			\draw[line width=\w] (-0.18968734612, 0) -- (-0.18968734612, \h);
			\draw[line width=\w] (-0.25718094826, 0) -- (-0.25718094826, \h);
			\draw[line width=\w] (-0.45197399219, 0) -- (-0.45197399219, \h);
			\draw[line width=\w] (-0.49610658599, 0) -- (-0.49610658599, \h);
			
			\pgfmathsetmacro{\mu}{1/243}
			\pgfmathsetmacro{\h}{2*\mu}
			
			\draw[line width=\w] (0.9993508167, 0) -- (0.9993508167, \h);
			\draw[line width=\w] (0.99195249042, 0) -- (0.99195249042, \h);
			\draw[line width=\w] (0.94636329349, 0) -- (0.94636329349, \h);
			\draw[line width=\w] (0.95837506807, 0) -- (0.95837506807, \h);
			\draw[line width=\w] (0.68413517883, 0) -- (0.68413517883, \h);
			\draw[line width=\w] (0.69666150713, 0) -- (0.69666150713, \h);
			\draw[line width=\w] (0.76485742052, 0) -- (0.76485742052, \h);
			\draw[line width=\w] (0.74820152843, 0) -- (0.74820152843, \h);
			\draw[line width=\w] (-0.4993508167, 0) -- (-0.4993508167, \h);
			\draw[line width=\w] (-0.49195249042, 0) -- (-0.49195249042, \h);
			\draw[line width=\w] (-0.44636329349, 0) -- (-0.44636329349, \h);
			\draw[line width=\w] (-0.45837506807, 0) -- (-0.45837506807, \h);
			\draw[line width=\w] (-0.18413517883, 0) -- (-0.18413517883, \h);
			\draw[line width=\w] (-0.19666150713, 0) -- (-0.19666150713, \h);
			\draw[line width=\w] (-0.26485742052, 0) -- (-0.26485742052, \h);
			\draw[line width=\w] (-0.24820152843, 0) -- (-0.24820152843, \h);
			
			\end{scope}
		\end{tikzpicture}

		\caption{$\spec(M_\xi)$ for Grigorchuk's group (left) and the Gupta-Fabrykowski group (right), for any $\xi \in X^\N$. The height represents the density of states $\nu$ of each point (in logarithmic scale).}
		\label{fig:spectra}
	\end{figure}
\end{center}

\KNSspectralmeasure*
\begin{proof}
	Let $\nu_n$ be the counting measure on the spectrum of $M_n$, i.e.
	\[
	\nu_n = \frac{1}{d^n}\sum_{\lambda \in \spec(M_n)}\delta_\lambda.
	\]
	
	From the multiplicities computed in the proof of Theorem~\ref{thm:finite_spec}, we have that $\nu_0 = \delta_1$, $\nu_2 = \frac{1}{d}(\delta_1 + (d-1)\delta_{\frac{|S| - d}{|S|}})$ and, for $n \ge 2$,
	\[
	\nu_n = \frac{1}{d^n}\left(\delta_1 + \left((d-2)d^{n-1} + 1\right)\delta_{\frac{|S| - d}{|S|}}  + \sum_{k=0}^{n-2}\sum_{x \in F^{-k}(0)}\left((d-2)d^{n-k-2} + 1\right)\left(\delta_{\psi^{-1}_0(x)} + \delta_{\psi^{-1}_1(x)}\right)\right),
	\]
	with $\psi^{-1}_0$ and $\psi^{-1}_1$ being the two branches of the inverse of $\psi$.
	
	For $d > 2$, we observe, in the limit as $n\to\infty$, the measure
	\[
	\nu = \frac{d-2}{d}\delta_{\frac{|S| - d}{|S|}} + \sum_{n \ge 0}\sum_{x \in F^{-n}(0)}\frac{d-2}{d^{n+2}}\left(\delta_{\psi^{-1}_0(x)} + \delta_{\psi^{-1}_1(x)}\right),
	\]
	as in the statement.
	
	For $d = 2$, all the multiplicities of the eigenvalues in the finite graphs are $1$, or equivalently, every eigenvalue of $M_n$ has the same measure $\frac{1}{d^n}$. When taking the limit, the measure of each atom tends to zero and the set of eigenvalues is dense in either one ($m=1$) or two ($m \ge 2$) intervals. However, any set of positive spectral measure still has to be the union of cones of the tree of preimages of $F$ plus their closure, which would have positive Lebesgue measure. Hence, $\nu$ is absolutely continuous with respect to the Lebesgue measure. We can find its precise density if we notice the following, for $d=2$ and $n \ge 1$:
	\[
	\spec(\Gamma_n) = \left\{ \frac{1}{2} - \frac{1}{2^m} + \frac{(-1)^\epsilon}{2^m}\sqrt{4^{m-1} + 1 + 2^m \cos\theta} \bigm| \epsilon \in \{0, 1\}, \theta \in \frac{2\pi\Z}{2^n} \right\} \setminus \left\{0, - \frac{1}{2^{m-1}} \right\}.
	\]

	Indeed, from the proof of Theorem~\ref{thm:finite_spec} we recover the two branches of the inverse of $\psi$:
	\[
	\psi_\epsilon(x) = \frac{1}{2} - \frac{1}{2^m} + \frac{(-1)^\epsilon}{2^m}\sqrt{4^{m-1} + 1 + 2^{m-1}x}.
	\]
	Any $x \in F^{-k}(0)$ can be written as $x = \pm \sqrt{2 + y}$, with $y \in F^{-(k-1)}(0)$. We can hence complete the proof of the equality above by induction, using the trigonometric identity $2\cos(\frac{\theta}{2}) = \pm \sqrt{2 + 2\cos(\theta)}$. This allows us to find an injective, measure-preserving map $\chi: [0,\pi]\times \{0, 1\} \to \R$, defined by
	\[
	\chi(\theta, \epsilon) = \frac{1}{2} - \frac{1}{2^m} + \frac{(-1)^\epsilon}{2^m}\sqrt{4^{m-1} + 1 + 2^m \cos\theta},
	\]
	with the spectrum uniformly distributed on $[0,\pi]\times \{0, 1\}$. The measure of any subset $E \subset \R$ is $\nu(E) = \lambda(\chi^{-1}(E))$, with $\lambda$ being the Lebesgue measure on $[0,\pi]\times \{0, 1\}$. The density $g(x)$ of $\nu$ is thus given by
	\[
	g(x) = \frac{1}{2\pi}\frac{d}{dx}\chi^{-1}(x),
	\]
	which coincides with the expression in the statement.
\end{proof}



	

We end this section by proving that in the case $d = 2$ the Kesten spectral measures for graphs $\Gamma_\xi$ for all $\xi$ except the orbit of $1^\N$ are equal to the density of states. More precisely, we prove the following result.

\begin{center}
	\begin{figure}[ht]
		\includegraphics[scale=1]{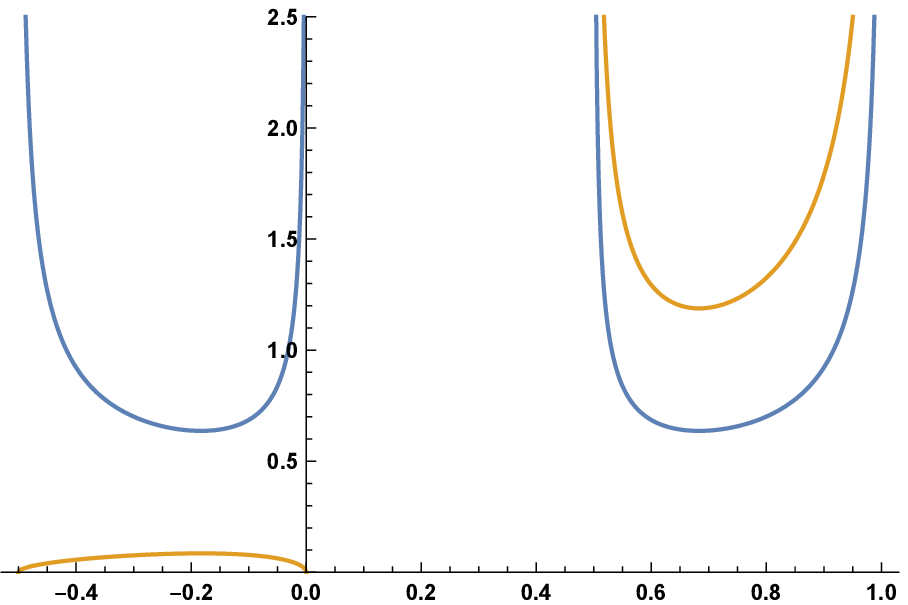}
		\caption{Densities of the spectral measures $\mu_\xi$ for $d=2$ and $m=2$. In blue, the symmetric density corresponds to points not in the orbit of $1^\N$, which have two-ended lines as Schreier graphs. In orange, the asymmetric density corresponds to the point $1^\N$, whose Schreier graph is a one-ended line.}
		\label{fig:spectral-measures}
	\end{figure}
\end{center}

\Kestenspectralmeasurebinary*
\begin{proof}
First recall that for any $\xi$ not in the orbit of $1^\N$ the graphs $\Gamma_\xi$ are two-ended lines. More precisely, every vertex has $2^{m-1} - 1$ loops, $2^m - 2^{m-1}$ edges to one neighbor and one edge to the other neighbor. The simple random walk on such graphs is described by the Markov chain on $\Z$ with probability $\frac{1}{2} - \frac{1}{2^m}$ of staying at any vertex, and alternating probabilities $\frac{1}{2}$ and $\frac{1}{2^m}$ on the other edges. This implies that the Kesten spectral measures $\mu_\xi$ do not depend on this point $\xi$, except for $\xi$ in the orbit of $1^\N$.

The density of states $\nu$ is the integral of the Kesten measures $\mu_\xi$ over all $X^\N$ (see Theorem 10.8 in~\cite{G11}), but we just showed that they are all equal in a subset of $X^\N$ of measure one. Hence, we necessarily have $\mu_\xi = \nu$ for every $\xi$ in that subset.

The density $h(x)$ of $\mu_{1^\N}$ is computed with an approach similar to that in~\cite{GK12}. It uses the fact that the Stieltjes transform of the density of a spectral measure of the Markov operator on a graph coincides with its moment-generating function. We omit the technical computations.
\end{proof}


Recall that fixing $d=2$ and $m$ gives us uncountably many isospectral groups. Moreover, for those groups, Proposition~\ref{prop:Kesten_spectral_measure_binary} concludes that, for a subset of boundary points of measure one, the Kesten spectral measures on the orbital Schreier graphs coincide. It would be very interesting to determine the Kesten spectral measures on the Cayley graphs of these groups.

\section{Pure point spectrum and eigenfunctions}
\label{sec:pure_point_eigenfunctions}

This section is devoted to the proof of Theorem~\ref{thm:Kesten_spectral_measure}. We will establish that, for $d \ge 3$, the Kesten spectral measures on $\Gamma_\xi$ are discrete and the eigenfunctions of the Markov operator $M_\xi$ are finitely supported, for every $\xi$ in a set of uniform Bernoulli measure one. To do that, we will use the following strategy. We will first find the eigenfunctions on the finite graphs $\Gamma_n$ (Proposition~\ref{prop:eigenfunctions_finite} and Corollary~\ref{cor:basis_Mn}). Next, we will extend those to $\Gamma_\xi$ and show that some of them remain eigenfunctions (Theorem~\ref{thm:eigenfunctions_Mxi}). Finally, we will show that the set $\F$ of eigenfunctions that we constructed is complete for every $\xi$ in a set of uniform Bernoulli measure one. As $\F$ is a complete set of eigenfunctions for $M_\xi$, any spectral measure $\mu_f$ of $M_\xi$ associated to $f \in \ell^2(\Gamma_\xi)$ must be discrete, in particular this holds for the Kesten spectral measures $\mu_\eta$, $\eta \in \text{Vert}(\Gamma_\xi)$. Moreover we show that all functions in $\F$ are of finite support.



In this section we assume $d \ge 3$. Let us write $\elln = \ell^2(V(\Gamma_n))$ and $\ell^2 = \ell^2(V(\Gamma_\xi))$. We start by defining a notion of antisymmetry on the graphs $\Gamma_n$, which will be satisfied by the eigenfunctions. Let $\tau_i = (i, i+1) \in Sym(X)$, for $i \in \{0, \dots, d-2 \}$, and let $\Phiin: \Gamma_n \to \Gamma_n$ be the automorphisms of $\Gamma_n$ defined by $\Phiin(v_0 \dots v_{n-1}) = v_0 \dots v_{n-2}\tau_i(v_{n-1})$. Recall that the graph $\Gamma_n$ can be decomposed as $d$ copies of $\Gamma_{n-1}$ each of which is connected to the others only through one vertex. $\Phiin$ exchanges the $i$-th and ($i+1$)-th copies of $\Gamma_{n-1}$ in this decomposition. We will say that $f \in \elln$ is antisymmetric with respect to $\Phiin$ if $f = - f \circ \Phiin$. In particular, this implies that $f$ is supported only on the $i$-th and ($i+1$)-th copies of $\Gamma_{n-1}$ in $\Gamma_n$.


\begin{prop}
	\label{prop:basis_MN}
	Let $N \ge 1$ and $\lambda \in \spec(M_N) \setminus \spec(M_{N-1})$. There is a basis $\F_{\lambda, N} = \{f_0, \dots, f_{d-2}\}$ of the $\lambda$-eigenspace of $M_N$, such that $f_i$ is antisymmetric with respect to $\Phiin$, for every $i\in\{0, \dots, d-2\}$. In particular, each $f_i$ is supported in $X^{N-1}i \sqcup X^{N-1}(i+1)$.
\end{prop}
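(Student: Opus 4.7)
The plan is to exploit the $S_d$-action on $\ell^2_N := \ell^2(V(\Gamma_N))$ generated by $\Phi^0_N, \ldots, \Phi^{d-2}_N$, which together act by all permutations of the last letter of each vertex, and to decompose the $\lambda$-eigenspace via representation theory. Each $\Phi^i_N$ commutes with $M_N$ because it is a graph automorphism, so the whole $S_d$-action commutes with $M_N$. Under the identification $\ell^2_N \cong \ell^2(X^{N-1}) \otimes \ell^2(X)$ this action is trivial on the first factor and is the permutation representation on the second; writing $\ell^2(X) = \C \oplus V_{std}$ with $V_{std}$ the standard representation of dimension $d-1$ yields the $M_N$-invariant decomposition
\[
\ell^2_N = \tilde{\ell}^2_{N-1} \oplus \bigl(\ell^2(X^{N-1}) \otimes V_{std}\bigr),
\]
where $\tilde{\ell}^2_{N-1}$ denotes the subspace of functions constant in the last letter.

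Next I will verify that, under the obvious identification $\tilde{\ell}^2_{N-1} \cong \ell^2(X^{N-1})$, the operator $M_N$ restricted to $\tilde{\ell}^2_{N-1}$ coincides with $M_{N-1}$. The only nontrivial point concerns the ``boundary'' vertices of the form $(d-1)^{N-2}0\cdot w$, at which a $b$-generator actually modifies the last letter; a direct computation will show compatibility with $M_{N-1}$ at the vertex $(d-1)^{N-2}0$, because in $\Gamma_{N-1}$ every $b$ fixes that vertex (no $N$-th letter available to modify), so both actions agree on lifted functions. Given this identification, $\spec(M_N|_{\tilde{\ell}^2_{N-1}}) = \spec(M_{N-1})$, and for $\lambda \in \spec(M_N) \setminus \spec(M_{N-1})$ the $\lambda$-eigenspace $E_\lambda$ satisfies $E_\lambda \cap \tilde{\ell}^2_{N-1} = 0$; being $S_d$-invariant it must therefore lie inside the $V_{std}$-isotypic component $\ell^2(X^{N-1}) \otimes V_{std}$. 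By Schur's lemma any $S_d$-invariant subspace there has the form $W \otimes V_{std}$, and the multiplicity computation from the proof of Theorem~\ref{thm:finite_spec} with $k = N-2$ gives $\dim E_\lambda = (d-2)d^0 + 1 = d-1$, forcing $\dim W = 1$; write $W = \C w$.

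Finally, setting $f_i := w \otimes (e_i - e_{i+1})$ for $i = 0, \ldots, d-2$, where $e_j$ is the indicator of $j \in X$, produces the desired basis. Since $e_i - e_{i+1}$ spans the $-1$-eigenspace of $\tau_i$ acting on $V_{std}$ and is supported on $\{i, i+1\}$, each $f_i$ is antisymmetric with respect to $\Phi^i_N$ and supported in $X^{N-1}i \sqcup X^{N-1}(i+1)$. The family $\{e_i - e_{i+1}\}_{i=0}^{d-2}$ is a basis of $V_{std}$, so $\{f_0, \ldots, f_{d-2}\}$ is a basis of $E_\lambda = \C w \otimes V_{std}$. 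The main obstacle I expect is the bookkeeping required to prove $M_N|_{\tilde{\ell}^2_{N-1}} = M_{N-1}$, specifically the compatibility of the $b$-action at the boundary vertex $(d-1)^{N-2}0$; once this identification is established, the rest is a direct consequence of the representation theory of $S_d$ combined with the multiplicity count already performed in Theorem~\ref{thm:finite_spec}.
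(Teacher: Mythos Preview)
Your argument is correct and takes a genuinely different route from the paper. The paper's proof is a two-line sketch: it picks an arbitrary $\lambda$-eigenfunction $f$, sets $f_i = f - f\circ\Phi^i_N$, and asserts linear independence ``upon examination of their supports''. This is quick but implicitly requires that no $f_i$ vanish, which amounts to choosing $f$ generic (a point the paper glosses over). Your representation-theoretic approach avoids this issue entirely: by establishing that $M_N$ restricted to the $S_d$-invariant subspace is exactly $M_{N-1}$ (the boundary check at $(d-1)^{N-2}0$ works as you indicate, since in $\Gamma_{N-1}$ every $b\in B$ fixes that vertex), you force the whole eigenspace into the $V_{std}$-isotypic component and then read off $E_\lambda = \C w \otimes V_{std}$ from Schur's lemma and the multiplicity count. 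This buys you more than the statement asks for --- you see at once that every $f_i$ has the same ``radial'' profile $w$ on $X^{N-1}$ --- and the basis $\{w\otimes(e_i-e_{i+1})\}$ drops out without any genericity concern. The price is the bookkeeping you flag (verifying the intertwining $M_N|_{\tilde{\ell}^2_{N-1}} = M_{N-1}$), but this is a routine case analysis on where the first $0$ occurs in the word, and you have correctly identified the only delicate case.
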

\begin{proof}
	We know that the multiplicity of $\lambda$ in $\spec(M_N)$ is exactly $d-1$, as we computed in the proof of Theorem~\ref{thm:spectrum_schreier}.

	Due to the symmetry of $\Gamma_N$, given a $\lambda$-eigenfuction $f \in \elln$, we know that $f_i = f - f \circ \Phiin$ will be antisymmetric with respect to $\Phiin$ and will still be a $\lambda$-eigenfunction, for any $i\in\{0, \dots, d-2\}$. Furthermore, the fact that these functions are linearly independent becomes clear upon examination of their supports.
\end{proof}

Now, using the notations of Proposition~\ref{prop:basis_MN}, we partition the basis $\F_{\lambda, N}$ into four parts in order to obtain the eigenfunctions of $M_n$, for $n \ge N$:
\[
\F_{\lambda, N}^A := \{f_{d-2}\},
\qquad
\F_{\lambda, N}^B := \{f_0\},
\]\[
\F_{\lambda, N}^C := \F_{\lambda, N} \setminus \{f_0, f_{d-2}\},
\qquad
\F_{\lambda, N}^D := \emptyset.
\]

We would like to translate these eigenfunctions from $\Gamma_N$ to $\Gamma_n$, with $n \ge N$. Recall that the graph $\Gamma_{n+1}$ consists of $d$ copies of $\Gamma_n$ joined together by a central piece. We will take advantage of this decomposition with the following natural graph inclusions. Let $n \ge 1$ and $i \in X$. We define
\[
\iota^i_n: \Gamma_n \to \Gamma_{n+1}, \quad \iota^i_n(v) = vi.
\]

We may also define the following induced linear operator (see Figure~\ref{fig:rhos}):
\[
\rho^i_n: \elln \to \ellnn, \quad \rho^i_nf(vj) = \begin{cases}
f\circ(\iota^i_n)^{-1}(vj) & \textrm{if } i=j \\ 0 & \textrm{otherwise}
\end{cases}.
\]
Equivalently, $\rho^i_nf(vj) = f(v)\delta_{i,j}$, where $\delta_{i,j}$ is $1$ if $i=j$ or $0$ otherwise. Let also $\rho^n = \sum_{i \in X} \rho_i^n$.

\begin{center}
	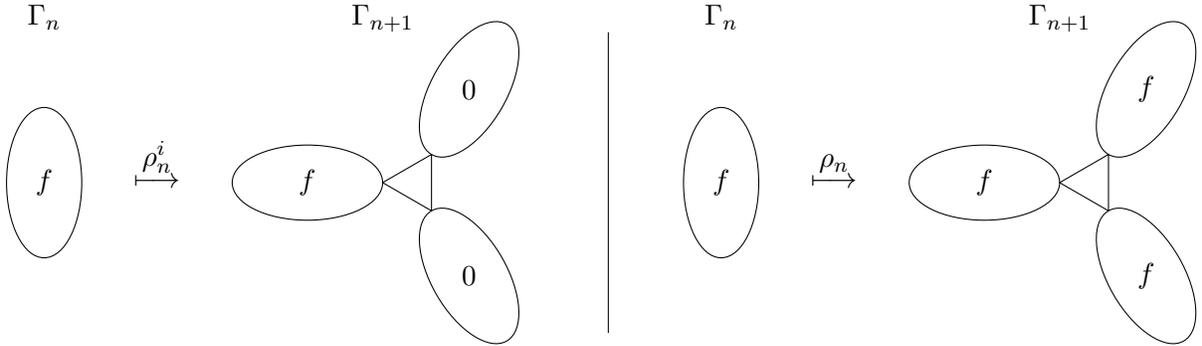
\begin{figure}[h]
		\begin{tikzpicture}
		\begin{scope}[shift={(0,0)}]
			\draw (0,0) ellipse (0.5 and 1);
			\draw (0,0) node {$f$};
			\draw (0,2.5) node[below] {$\Gamma_n$};
	
			\draw (1.5,0) node {$\longmapsto$};
			\draw (1.5,0) node[above] {$\rho^i_n$};

			\begin{scope}[shift={(4.5,0)}]
				\draw (-1,0) ellipse (1 and 0.5);
				\draw (-1,0) node {$f$};
	
				\draw (0, 0) -- (30:0.75) -- (-30:0.75) -- cycle;
	
				\begin{scope}[shift={(30:0.75)}]
					\draw[rotate=60] (1,0) ellipse (1 and 0.5);
					\draw[rotate=60] (1,0) node {$0$};
				\end{scope}
	
				\begin{scope}[shift={(-30:0.75)}]
					\draw[rotate=-60] (1,0) ellipse (1 and 0.5);
					\draw[rotate=-60] (1,0) node {$0$};
				\end{scope}
	
				\draw (0,2.5) node[below] {$\Gamma_{n+1}$};
			\end{scope}
		\end{scope}

		\draw (7.5,-2) -- (7.5,2);

		\begin{scope}[shift={(9,0)}]
			\draw (0,0) ellipse (0.5 and 1);
			\draw (0,0) node {$f$};
			\draw (0,2.5) node[below] {$\Gamma_n$};
	
			\draw (1.5,0) node {$\longmapsto$};
			\draw (1.5,0) node[above] {$\rho_n$};

			\begin{scope}[shift={(4.5,0)}]
				\draw (-1,0) ellipse (1 and 0.5);
				\draw (-1,0) node {$f$};
	
				\draw (0, 0) -- (30:0.75) -- (-30:0.75) -- cycle;
	
				\begin{scope}[shift={(30:0.75)}]
					\draw[rotate=60] (1,0) ellipse (1 and 0.5);
					\draw[rotate=60] (1,0) node {$f$};
				\end{scope}
	
				\begin{scope}[shift={(-30:0.75)}]
					\draw[rotate=-60] (1,0) ellipse (1 and 0.5);
					\draw[rotate=-60] (1,0) node {$f$};
				\end{scope}
	
				\draw (0,2.5) node[below] {$\Gamma_{n+1}$};
			\end{scope}
		\end{scope}

		\end{tikzpicture}
		
		\caption{Sketch of the transition operators $\rho^i_n$ and $\rho_n$. The former copies $f$ on the $i$-th copy of $\Gamma_n$ in $\Gamma_{n+1}$ and vanishes elsewhere; the latter copies $f$ on all the copies of $\Gamma_n$.}
		\label{fig:rhos}
	\end{figure}
\end{center}

Now, in order to get the eigenfunctions of $M_{n+1}$ from those of $M_n$, we apply these transition functions $\rho^i_n$ in the following way, for $n \ge N$,
\[
\F_{\lambda, n+1}^A := \rho^{d-1}_n(\F_{\lambda, n}^A),
\qquad
\F_{\lambda, n+1}^B := \rho^0_n(\F_{\lambda, n}^A),
\]\[
\F_{\lambda, n+1}^C := \bigsqcup_{i \neq 0, d-1} \rho^i_n(\F_{\lambda, n}^A) \sqcup \rho_n(\F_{\lambda, n}^B),
\qquad
\F_{\lambda, n+1}^D := \bigsqcup_{i \in X} \rho^i_n(\F_{\lambda, n}^C \sqcup \F_{\lambda, n}^D).
\]

Finally, we set $\F_{\lambda, n} := \F_{\lambda, n}^A \sqcup \F_{\lambda, n}^B \sqcup \F_{\lambda, n}^C \sqcup \F_{\lambda, n}^D$. 

\begin{rmk}
	One can look at the supports of the functions in $\F_{\lambda, n}^A$, $\F_{\lambda, n}^B$, $\F_{\lambda, n}^C$ and $\F_{\lambda, n}^D$ to verify that these four sets are disjoint, and the following statements can be inductively proven:
	\begin{itemize}
		\item $|\F_{\lambda, n}^A| = |\F_{\lambda, n}^B| = 1$, $\forall n \ge N$.
		\item $|\F_{\lambda, N}^C| = d-3$, and $|\F_{\lambda, n}^C| = d-1$, $\forall n \ge N+1$.
		\item $|\F_{\lambda, n}| = (d-2)d^{n-N} + 1$, $\forall n \ge N$.
	\end{itemize}
	The sizes of $\F_{\lambda, n}^A$, $\F_{\lambda, n}^B$ and $\F_{\lambda, n}^C$ are uniformly bounded for all $n$. However, the size of $\F_{\lambda, n}^D$ grows with $n$. Furthermore, notice that, by construction, the following statements hold for every $n \ge N$:
	\[
	\forall f \in \F_{\lambda, n}\setminus\F_{\lambda, n}^B, \quad f((d-1)^{n-1}0) = 0,
	\]\[
	\forall f \in \F_{\lambda, n}\setminus \F_{\lambda, n}^A, \quad f((d-1)^n) = 0.
	\]
\end{rmk}

\begin{prop}
	\label{prop:eigenfunctions_finite}
	Let $N \ge 1$ and $\lambda \in \spec(M_N) \setminus \spec(M_{N-1})$. Then $\F_{\lambda, n}$ is a basis of the $\lambda$-eigenspace of $M_n$, for every $n \ge N$.
\end{prop}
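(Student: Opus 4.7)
The plan is to proceed by induction on $n \ge N$, with the base case $n = N$ given by Proposition~\ref{prop:basis_MN}. For the inductive step, I need to verify four things about $\F_{\lambda, n+1}$: that each of its elements is a $\lambda$-eigenfunction of $M_{n+1}$, that the family is linearly independent, that its size matches the multiplicity $(d-2)d^{n+1-N}+1$ of $\lambda$ in $\spec(M_{n+1})$ coming from Theorem~\ref{thm:finite_spec}, and that the spine-vanishing invariants ($f((d-1)^{n}0)=0$ for $f \in \F_{\lambda, n+1}\setminus\F_{\lambda, n+1}^B$ and $f((d-1)^{n+1})=0$ for $f \in \F_{\lambda, n+1}\setminus\F_{\lambda, n+1}^A$) propagate.

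First I would analyze the action of $M_{n+1}$ on lifts. Viewing $\Gamma_{n+1}$ as $d$ copies of $\Gamma_n$ embedded via $\iota^i_n$, a case analysis on the prefix $(d-1)^k 0$ controlling $b \in B$ shows that the only edges of $\Gamma_{n+1}$ crossing between different copies are the $b$-edges emanating from the $d$ central vertices $\{(d-1)^{n-1}0\,i : i \in X\}$, because $b$ modifies the last letter only when the word starts with $(d-1)^{n-1}0$, using $\omega_{n-1}$. All other generators, and all $b$ with a shorter spine prefix, preserve the last letter and act on each copy exactly as on $\Gamma_n$. Consequently, for $g = \rho^i_n f$ and any vertex $v'j$ with $v' \ne (d-1)^{n-1}0$, one has $M_{n+1}g(v'j) = \delta_{ij}\, M_n f(v')$; the same local identity, with $\delta_{ij}$ replaced by $1$, holds for $g = \rho_n f$.

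At the central vertex $v = (d-1)^{n-1}0$ the computation is subtler. For $g = \rho^i_n f$, summing the $b$-contributions on the $j$-th copy produces a leakage $\tfrac{d^{m-1}}{|S|} f(v)$ onto the copies $j \ne i$, which would destroy the eigenvalue equation; this leakage vanishes precisely when $f(v) = 0$, which is the spine invariant satisfied by $\F_{\lambda, n}^A$, $\F_{\lambda, n}^C$ and $\F_{\lambda, n}^D$. For $g = \rho_n f$ no vanishing is needed: the cross-copy $b$-contributions add up to reproduce exactly the $b$-loop contribution $(d^m - 1) f(v)$ present at $v$ inside $\Gamma_n$, so the eigenvalue equation holds automatically. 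The invariants at level $n+1$ then follow by inspecting supports: $\rho^i_n f$ lives on last letter $i$ (so vanishes at $(d-1)^n 0$ unless $i=0$, and at $(d-1)^{n+1}$ unless $i = d-1$), while the $\rho_n$-lift of an $\F_{\lambda, n}^B$ element inherits vanishing at both $(d-1)^{n}0$ and $(d-1)^{n+1}$ from the level-$n$ invariant $f((d-1)^n) = 0$.

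Finally, linear independence is verified by evaluating any linear combination at the central vertices $(d-1)^{n-1}0\,j$. Only $\rho_n g$ with $g \in \F_{\lambda, n}^B$ is non-zero there (and equal to $g(v) \ne 0$ uniformly in $j$), so its coefficient is immediately forced to vanish. The remaining lifts $\rho^i_n$ have pairwise disjoint supports, so the question reduces, copy by copy, to the inductive linear independence of $\F_{\lambda, n}^A \sqcup \F_{\lambda, n}^C \sqcup \F_{\lambda, n}^D$. A short count gives $|\F_{\lambda, n+1}| = 2 + (d-1) + d(|\F_{\lambda, n}^C| + |\F_{\lambda, n}^D|) = (d-2)d^{n+1-N}+1$, matching the multiplicity from Theorem~\ref{thm:finite_spec}, so the family is a basis of the $\lambda$-eigenspace. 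The principal technical obstacle is the bookkeeping at the central vertices: confirming that the symmetric lift $\rho_n$ absorbs exactly the leakage produced by the cross-copy $b$-action, and that the four-part decomposition $\F^A \sqcup \F^B \sqcup \F^C \sqcup \F^D$ is precisely what is needed to propagate the two spine-vanishing invariants simultaneously across all levels.
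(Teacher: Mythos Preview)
Your proposal is correct and follows essentially the same route as the paper: induction on $n$ with base case Proposition~\ref{prop:basis_MN}, the key computation that $M_{n+1}\rho^i_nf$ and $\rho^i_nM_nf$ agree away from the central vertices $(d-1)^{n-1}0\,j$ while the discrepancy there is controlled by $f((d-1)^{n-1}0)$, the observation that this vanishes for $\F_{\lambda,n}\setminus\F_{\lambda,n}^B$ and is absorbed by the symmetric lift $\rho_n$ for $\F_{\lambda,n}^B$, and finally linear independence via supports together with the cardinality count against the multiplicity from Theorem~\ref{thm:finite_spec}. One small caution: your linear-independence step asserts $g((d-1)^{n-1}0)\neq 0$ for $g\in\F_{\lambda,n}^B$, which is not explicitly established; it is cleaner (and closer to the paper) to restrict a vanishing linear combination to each copy and invoke the inductive independence of all of $\F_{\lambda,n}$, which forces the coefficient of $\rho_ng$ to vanish without needing that pointwise nonvanishing.
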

\begin{proof}
	Let us proceed by induction on $n$, with the base case $n=N$ covered in Proposition~\ref{prop:basis_MN}.
	
	Let $f \in \F_{\lambda, n}$ be a $\lambda$-eigenfunction of $M_n$. and let $v \in X^n$, $j \in X$ and $s \in S$. On one hand we have
	\[
	\rho^i_nM_nf(vj) = M_nf(v) \delta_{i,j} = \frac{1}{|S|}\sum_{s \in S} f(s(v)) \delta_{i,j}.
	\]
	On the other hand, if $v \neq (d-1)^{n-1}0$, we have $s(vj) = s(v)j$. In that case,
	\[
	M_{n+1}\rho^i_nf(vj) = \frac{1}{|S|}\sum_{s \in S} \rho^i_nf(s(vj)) = \frac{1}{|S|}\sum_{s \in S} \rho^i_nf(s(v)j) = \frac{1}{|S|}\sum_{s \in S} f(s(v)) \delta_{i,j}.
	\]
	So we have $M_{n+1}\rho^i_nf(vj) = \rho^i_nM_nf(vj) = \lambda \rho^i_nf(vj)$ if $v \neq (d-1)^{n-1}0$.
	
	For $v=(d-1)^{n-1}0$, we need to further decompose the sums:
	\[
	\rho^i_nM_nf(vj) = M_nf(v)\delta_{i,j} =
	\]\[
	= \frac{1}{|S|}\left( \sum_{k = 1}^{d-1}f(a^k(v))\delta_{i,j} + \sum_{1 \neq b \in B} f(b(v))\delta_{i,j}\right) =
	\]\[
	= \frac{1}{|S|}\left( \sum_{k = 1}^{d-1}f(a^k(v))\delta_{i,j} + \sum_{1 \neq b \in B} f(v)\delta_{i,j}\right),
	\]
	since $v$ is fixed by all $b\in B$, and
	\[
	M_{n+1}\rho^i_nf(vj) =
	\]\[
	= \frac{1}{|S|}\left( \sum_{k = 1}^{d-1}\rho^i_nf(a^k(vj)) + \sum_{1 \neq b \in B} \rho^i_nf(b(vj))\right) =
	\]\[
	= \frac{1}{|S|}\left( \sum_{k = 1}^{d-1}\rho^i_nf(a^k(v)j) + \sum_{1 \neq b \in B} \rho^i_nf(v \omega_{n-1}(b)(j))\right) =
	\]\[
	= \frac{1}{|S|}\left( \sum_{k = 1}^{d-1}f(a^k(v))\delta_{i,j} + \sum_{1 \neq b \in B} f(v)\delta_{i,\omega_{n-1}(b)(j)}\right).
	\]
	
	By substracting both expressions, we get
	\[
	(M_{n+1} - \lambda)\rho^i_nf(vj) = \frac{1}{|S|}\left( \sum_{1 \neq b \in B} f(v)(\delta_{i,\omega_{n-1}(b)(j)} - \delta_{i,j})\right).
	\]
	
	We observe now that if $f \in \F_{\lambda, n}\setminus \F_{\lambda, n}^B$, by construction, we have $f(v) = 0$, and so $\rho^i_nf$ is a $\lambda$-eigenfunction of $M_{n+1}$. Else, if $f \in \F_{\lambda, n}^B$, we add the equations for all $i \in X$:
	\[
	(M_{n+1} - \lambda)\rho_nf(vj) = \frac{1}{|S|}\left( \sum_{1 \neq b \in B} f(v)\sum_{i \in X}(\delta_{i,\omega_{n-1}(b)(j)} - \delta_{i,j})\right) = 0.
	\]
	In this case, $\rho_nf$ is an eigenfunction of $M_{n+1}$.
	
	We can inductively verify that the functions in $\F_{\lambda, n+1}$ are linearly independent by looking at the supports of the images of the functions from $\F_{\lambda, n}$ by $\rho^i_n$ and $\rho_n$. Finally, we already know that $|\F_{\lambda, n+1}| = (d-2)d^{n+1-N} + 1$, which equals the multiplicity of $\lambda$ for $M_{n+1}$, and so the dimension of the $\lambda$-eigenspace of $M_{n+1}$.
\end{proof}

Set $\F_{1, n}$ to be the singleton containing the constant function equal to one on $\Gamma_n$, $n \ge 0$.


\begin{cor}
	\label{cor:basis_Mn}
	The set
	\[
	\bigsqcup_{\lambda \in \spec(M_n)}  \F_{\lambda, n}
	\]
	is a basis of $\elln$ that consists of eigenfunctions of $M_n$.
\end{cor}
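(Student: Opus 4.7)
The corollary is essentially a packaging of Proposition~\ref{prop:eigenfunctions_finite} plus a dimension count, so my plan is short. First I would point out that $M_n$ is self-adjoint: the Schreier graph $\Gamma_n$ is $|S|$-regular and its adjacency matrix is symmetric (loops and multi-edges are allowed but the relation $s \in S \iff s^{-1} \in S$ makes the matrix symmetric), so $|S| M_n$ is symmetric and $M_n$ is self-adjoint on $\elln$. Hence eigenspaces of $M_n$ corresponding to distinct eigenvalues are mutually orthogonal, and in particular the union $\bigsqcup_{\lambda \in \spec(M_n)} \F_{\lambda,n}$ consists of linearly independent vectors as soon as each $\F_{\lambda,n}$ is itself linearly independent inside the $\lambda$-eigenspace.

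Next I would handle each eigenvalue. For $\lambda = 1$, Theorem~\ref{thm:finite_spec} tells us the eigenvalue has multiplicity one in $\spec(M_n)$, and the constant function $\F_{1,n}$ clearly satisfies $M_n \mathbf{1} = \mathbf{1}$, so $\F_{1,n}$ is a basis of the $1$-eigenspace. For every other $\lambda \in \spec(M_n)$, Theorem~\ref{thm:finite_spec} identifies a unique $N \ge 1$ with $\lambda \in \spec(M_N) \setminus \spec(M_{N-1})$: namely $N=1$ for $\lambda = \tfrac{|S|-d}{|S|}$, and $N = k+2$ for $\lambda \in \psi^{-1}(F^{-k}(0))$. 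Proposition~\ref{prop:eigenfunctions_finite} then says exactly that $\F_{\lambda,n}$ is a basis of the $\lambda$-eigenspace of $M_n$ for all $n \ge N$.

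It only remains to verify that the total number of vectors equals $d^n = \dim \elln$. Collecting the multiplicities computed in the proof of Theorem~\ref{thm:finite_spec}, the sum is
\begin{equation*}
1 \;+\; \bigl((d-2)d^{n-1}+1\bigr) \;+\; \sum_{k=0}^{n-2} 2^{k+1}\bigl((d-2)d^{n-k-2}+1\bigr).
\end{equation*}
Splitting the last sum and using the geometric identity $\sum_{k=0}^{n-2} 2^{k+1} d^{n-k-2} = \tfrac{2}{d-2}(d^{n-1}-2^{n-1})$, together with $\sum_{k=0}^{n-2} 2^{k+1} = 2^n - 2$, the total collapses to $d \cdot d^{n-1} = d^n$. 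Combined with the linear independence from the orthogonality argument, this proves that $\bigsqcup_\lambda \F_{\lambda,n}$ is a basis of $\elln$.

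The only step that requires any care is the dimension count; everything else is a direct appeal to earlier results. In particular no new spectral analysis is needed, and the argument is independent of $\omega \in \Omega_{d,m}$.
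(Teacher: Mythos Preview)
Your proof is correct and follows the natural approach implicit in the paper (which states the corollary without proof): self-adjointness of $M_n$ gives the eigenspace decomposition, Proposition~\ref{prop:eigenfunctions_finite} supplies a basis of each eigenspace for $\lambda \neq 1$, and the separately defined $\F_{1,n}$ handles the simple eigenvalue $1$. The explicit dimension count you give is a nice consistency check but is in fact redundant once you have invoked self-adjointness: since $M_n$ is diagonalizable, $\elln = \bigoplus_{\lambda} E_\lambda$, and a basis of each summand automatically assembles to a basis of the whole space.
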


We now want to describe the eigenfunctions of $M_\xi$, the Markov operator on the infinite graph $\Gamma_\xi$, whose vertex set is $G\xi$ (the cofinality class of $\xi$ in $X^\N$). We will do so by translating the eigenfunctions in the finite graphs $\Gamma_n$ to $\Gamma_\xi$ via the transfer operators we now define. Let $\tilde{\iota}_n$ be the canonical extension of the inclusions $\iota^i_n$ for the finite graphs $\Gamma_n$ to the infinite graph $\Gamma_\xi$. Namely, $\tilde{\iota}_n: \Gamma_n \to \Gamma_\xi$, defined by $\tilde{\iota}_n(v) = v\sigma^n(\xi)$. In addition, we define the following natural operators linking functions on $\Gamma_n$ with functions on $\Gamma_\xi$. For $n \ge 0$, we set
\[
\tilde{\rho}_n: \elln \to \ell^2, \quad \tilde{\rho}_nf(\eta) = \begin{cases}
f\circ\tilde{\iota}_n^{-1}(\eta) & \textrm{if } \sigma^n(\xi) = \sigma^n(\eta) \\ 0 & \textrm{otherwise}
\end{cases}.
\]
Equivalently, $\tilde{\rho}_nf(\eta) = f(\eta_0\dots\eta_{n-1})\delta_{\sigma^n(\xi), \sigma^n(\eta)}$, where again $\delta_{a, b}$ is one iff $a=b$, zero otherwise. Informally, we could write $\tilde{\iota}_n = \dots \circ \iota^{\xi_{n+1}}_{n+1} \circ \iota^{\xi_n}_n$. Let us then define the following set:
\[
\F_\lambda := 
\bigcup_{n \ge N} \tilde{\rho}_n(\F_{\lambda, n}^D).
\]

If there exists $r \ge 0$ such that $\sigma^r(\xi) = (d-1)^\N$, let it be minimal and set $R = \max\{r, N\}$. In that case, we also include the function $\tilde{\rho}_R(\F_{\lambda, R}^A)$ in the definition of $\F_\lambda$.

\begin{thm}
	\label{thm:eigenfunctions_Mxi}
	Let $N \ge 1$ and $\lambda \in \spec(M_N) \setminus \spec(M_{N-1})$. Then every $f\in\F_\lambda$ is a $\lambda$-eigenfunction of $M_\xi$, for every $\xi \in X^\N$.
\end{thm}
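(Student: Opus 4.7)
The plan is to verify $M_\xi \tilde{\rho}_n f = \lambda \tilde{\rho}_n f$ pointwise on $\Gamma_\xi$, mirroring the induction step in the proof of Proposition~\ref{prop:eigenfunctions_finite} but with $\rho_n^i$ replaced by $\tilde{\rho}_n$. The two key inputs are $M_n f = \lambda f$ from Proposition~\ref{prop:eigenfunctions_finite} and the vanishing conditions recorded in the remark following it: every $f \in \F_{\lambda, n}^D$ satisfies both $f((d-1)^{n-1}0) = 0$ and $f((d-1)^n) = 0$, while every $f \in \F_{\lambda, R}^A$ satisfies $f((d-1)^{R-1}0) = 0$.

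First I would split $\Gamma_\xi$ into the embedded copy $\tilde{\iota}_n(X^n) = \{v\sigma^n(\xi) : v \in X^n\}$ and its complement, and treat each case separately. For $\eta = v\sigma^n(\xi)$ inside the copy I would expand
\[
(M_\xi \tilde{\rho}_n f)(\eta) = \frac{1}{|S|}\sum_{k=1}^{d-1}\tilde{\rho}_n f(a^k \eta) + \frac{1}{|S|}\sum_{1 \neq b \in B} \tilde{\rho}_n f(b\eta)
\]
and compare term by term with $M_n f(v)$. The powers of $a$ only change the first letter, so $a^k \eta = a^k(v)\sigma^n(\xi) \in \tilde{\iota}_n(X^n)$ and those summands equal $f(a^k v)$. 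For $b \in B$, its action is determined by the first prefix of $\eta$ of the form $(d-1)^j 0$; when $j \le n-2$ the change lies strictly inside $v$, and the contribution coincides with $f(bv)$. The only discrepancies with $M_n f(v)$ arise at $v = (d-1)^{n-1}0$ (where $b$ modifies position $n$, the first letter of $\sigma^n(\xi)$) and at $v = (d-1)^n$ (where, provided $\sigma^n(\xi)$ contains a zero, $b$ modifies a later letter of $\sigma^n(\xi)$). In both cases the discrepancy is a scalar multiple of $f(v)$, which is killed by the vanishing conditions, yielding $(M_\xi \tilde{\rho}_n f)(\eta) = (M_n f)(v) = \lambda \tilde{\rho}_n f(\eta)$.

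For $\eta \notin \tilde{\iota}_n(X^n)$ one has $\tilde{\rho}_n f(\eta) = 0$, and I would show $(M_\xi \tilde{\rho}_n f)(\eta) = 0$ by the same boundary analysis run in reverse: the only way a neighbor $s\eta$ can lie in $\tilde{\iota}_n(X^n)$ is via some $b \in B$ sending $\eta$ to a vertex of the form $v'\sigma^n(\xi)$ with $v' \in \{(d-1)^{n-1}0,\,(d-1)^n\}$, and at such vertices $\tilde{\rho}_n f(s\eta) = f(v') = 0$ by the same vanishing conditions.

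The special case $\sigma^r(\xi) = (d-1)^\N$ with $R = \max\{r, N\}$ is handled by the same argument at level $R$: although $f \in \F_{\lambda, R}^A$ need not vanish at $(d-1)^R$, the second discrepancy above simply does not arise, since $\sigma^R(\xi) = (d-1)^\N$ contains no zero and so $b$ fixes $(d-1)^R\sigma^R(\xi) = (d-1)^\N$; the first discrepancy is killed by $f((d-1)^{R-1}0) = 0$. The main technical obstacle throughout is the careful bookkeeping of how each $b \in B$ acts on the tail $\sigma^n(\xi)$ at the two critical vertices $(d-1)^{n-1}0\,\sigma^n(\xi)$ and $(d-1)^n\sigma^n(\xi)$; once one pins down that the action of $M_\xi$ differs from the levelwise $M_n$ exactly at these two loci, the asymmetric vanishing properties of $\F_{\lambda, n}^A, \F_{\lambda, n}^B, \F_{\lambda, n}^D$ and the tail structure of $\xi$ make the comparison immediate.
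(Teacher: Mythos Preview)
Your proposal is correct and follows essentially the same approach as the paper's proof: both verify $M_\xi \tilde{\rho}_n f = \lambda \tilde{\rho}_n f$ pointwise by comparing with $M_n f$, isolating the two critical prefixes $(d-1)^{n-1}0$ and $(d-1)^n$ where the action on the tail $\sigma^n(\eta)$ may be nontrivial, and then invoking the vanishing conditions on $\F_{\lambda,n}^D$ (respectively the fact that $(d-1)^\N$ is $B$-fixed for the $\F_{\lambda,R}^A$ case). The only organizational difference is that the paper treats all $\eta \in G\xi$ uniformly via the Kronecker delta $\delta_{\sigma^n(\xi),\sigma^n(\eta)}$ built into $\tilde{\rho}_n$, whereas you split explicitly into $\eta \in \tilde{\iota}_n(X^n)$ and $\eta \notin \tilde{\iota}_n(X^n)$; the underlying computation is identical.
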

\begin{proof}
	Let $n \ge N$ and $f \in \F_{\lambda, n}^D$. We will show that $\tilde{\rho}_n(f)$ is a $\lambda$-eigenfunction of $M_\xi$. Let $\eta \in G\xi$ and denote by $v$ its prefix of length $n$, so that $\eta = v \sigma^n(\eta)$. Assume first that $v$ is not $(d-1)^{n-1}0$ nor $(d-1)^n$. In that case, for any $s \in S$, $s(\eta) = s(v \sigma^n(\eta)) = s(v)\sigma^n(\eta)$. On one hand,
	\[
	M_{\xi}\tilde{\rho}_nf(\eta) = \frac{1}{|S|}\sum_{s \in S} \tilde{\rho}_nf (s(\eta)) = \frac{1}{|S|}\sum_{s \in S} \tilde{\rho}_nf (s(v)\sigma^n(\eta)) =
	\]\[
	= \frac{1}{|S|}\sum_{s \in S} f(s(v))\delta_{\sigma^n(\xi),\sigma^n(\eta)}.
	\]

	On the other hand,
	\[
	\tilde{\rho}_nM_nf(\eta) = M_nf(v)\delta_{\sigma^n(\xi), \sigma^n(\eta)} = \frac{1}{|S|}\sum_{s \in S} f(s(v))\delta_{\sigma^n(\xi),\sigma^n(\eta)}.
	\]
	
	We observe that both expressions are equal. Therefore,
	\[
	M_\xi \tilde{\rho}_nf(\eta) = \tilde{\rho}_nM_nf(\eta) = \lambda \tilde{\rho}_nf(\eta).
	\]
	
	Now let $v = (d-1)^{n-1}0$ or $v = (d-1)^n$. In that case, as $f \in \F_{\lambda, n}^D$, we have $f(v) = 0$. Then,
	\[
	M_\xi \tilde{\rho}_nf(\eta) = \frac{1}{|S|}\sum_{s \in S} \tilde{\rho}_nf (s(\eta)) = \frac{1}{|S|}\sum_{s \in S} \tilde{\rho}_nf (s(v\sigma^n(\eta))) =
	\]\[
	= \frac{1}{|S|}\sum_{s \in S} \tilde{\rho}_nf (s(v)s_{v}(\sigma^n(\eta))) = \frac{1}{|S|}\sum_{s \in S} f(s(v)) \delta_{\sigma^n(\xi), s_{v}(\sigma^n(\eta))}.
	\]
	
	In addition,
	\[
	\tilde{\rho}_nM_nf(\eta) = M_nf(v)\delta_{\sigma^n(\xi), \sigma^n(\eta)} = \frac{1}{|S|}\sum_{s \in S} f(s(v))\delta_{\sigma^n(\xi),\sigma^n(\eta)}.
	\]

	We have two cases: either $s \in A$, which means that $s_v$ is trivial, or $s \in B$, so $s(v) = v$ and then $f(s(v)) = 0$. In any case, the two expressions above coincide. Consequently, $M_\xi \tilde{\rho}_nf(\eta) = \tilde{\rho}_nM_nf(\eta) = \lambda \tilde{\rho}_nf(\eta)$ as well for $v = (d-1)^{n-1}0$ and $v = (d-1)^n$, which shows that $f$ is a $\lambda$-eigenfunction of $M_\xi$.
	
	The case $\xi$ cofinal with $(d-1)^\N$ and $f \in \F_{\lambda, R}^A$ is proven in a very similar way. The only difference is that now $f(v)$ is not necessarily zero for $v = (d-1)^R$. However, for any $s \in B$, $\delta_{\sigma^n(\xi), s_v(\sigma^n(\eta))} = \delta_{s_v^{-1}(\sigma^n(\xi)), \sigma^n(\eta)} =  \delta_{\sigma^n(\xi), \sigma^n(\eta)}$, since $\sigma^n(\xi) = (d-1)^\N$ is fixed by $s_v^{-1}$. Therefore, both expressions are still equal and the statement remains true for this case too.
\end{proof}

We finally introduce the set
\[
\F := \bigcup_{N \ge 1}\bigcup_{\substack{\lambda \in \spec(M_N) \\ \lambda \not\in \spec(M_{N-1})}} \F_\lambda.
\]

Notice that if $\lambda \in \spec(M_N)\setminus\spec(M_{N-1})$, the size of the support of any $f \in \F_\lambda$ is either $2d^{N-1}$ or $2d^N$.

\input{img/gupta-fabrykowski-fups-1}
\begin{center}
	\begin{figure}[h]
		\begin{tikzpicture}[scale=0.895]
		\begin{scope}[shift={(0,0)}]
			\draw (0,0) -- (240:1) -- (-1, 0) -- cycle;
		\end{scope}
		
		\begin{scope}[shift={(-1,0)}]
			\draw[color=blue] (0,0) -- (120:1) -- (180:1) -- cycle;
			
			\begin{scope}[transparency group, opacity=0.2]
				\draw [line width=10, draw=red, line cap=round, rounded corners]
				(0,0) -- (0:1) -- (300:1) -- (120:2) -- +(0:1) -- (120:1);
			\end{scope}
			
			\begin{scope}[shift={(120:1)}]
				\draw (0,0) -- (60:1) -- (120:1) -- cycle;
			\end{scope}
			
			\begin{scope}[shift={(180:1)}]
				\draw (0,0) -- (120:1) -- (180:1) -- cycle;
			\end{scope}
		\end{scope}
		
		\begin{scope}[shift={(-3,0)}]
			\draw[color=blue] (0,0) -- (120:1) -- (180:1) -- cycle;
			
			\begin{scope}[transparency group, opacity=0.2]
				\draw [line width=10, draw=blue, line cap=round, rounded corners]
				(0:2) -- (0:3) -- +(240:1) -- (0:2) -- (0:1) -- +(120:1) -- (0,0) -- (0:1);
				\draw [line width=10, draw=blue, line cap=round, rounded corners, shift={(180:2)}]
				(120:1) -- +(60:1) -- (120:2) -- (300:1) -- +(60:1) -- (0,0);
				\draw [line width=10, draw=blue, line cap=round, rounded corners, shift={(120:1)}]
				(60:2) -- (60:3) -- +(300:1) -- (60:2) -- (0,0) -- (120:1) -- (60:1);
				\draw [line width=10, draw=blue, line cap=round, rounded corners]
				(0,0) -- (120:1) -- (180:1) -- (0,0);
			\end{scope}
			
			\begin{scope}[shift={(120:1)}]
				\draw (0,0) -- (60:1) -- (120:1) -- cycle;
				
				\begin{scope}[shift={(60:1)}]
					\draw[color=blue] (0,0) -- (60:1) -- (120:1) -- cycle;
					
					\begin{scope}[transparency group, opacity=0.2]
						\draw [line width=10, draw=red, line cap=round, rounded corners]
						(60:1) -- (60:2) -- +(300:1) -- (120:1) -- +(180:1) -- (120:2) -- (120:1);
					\end{scope}
					
					\begin{scope}[shift={(60:1)}]
						\draw (0,0) -- (0:1) -- (60:1) -- cycle;
					\end{scope}
					
					\begin{scope}[shift={(120:1)}]
						\draw (0,0) -- (120:1) -- (180:1) -- cycle;
					\end{scope}
				\end{scope}
			\end{scope}
			
			\begin{scope}[shift={(180:1)}]
				\draw (0,0) -- (180:1) -- (240:1) -- cycle;
				
				\begin{scope}[shift={(180:1)}]
					\draw[color=blue] (0,0) -- (120:1) -- (180:1) -- cycle;
				
					\begin{scope}[shift={(120:1)}]
						\draw (0,0) -- (60:1) -- (120:1) -- cycle;
					\end{scope}
					
					\begin{scope}[shift={(180:1)}]
					\draw (0,0) -- (120:1) -- (180:1) -- cycle;
					\end{scope}
				\end{scope}
			\end{scope}
		\end{scope}
		
		\begin{scope}[shift={(-7,0)}]
			\draw[color=blue] (0,0) -- (120:1) -- (180:1) -- cycle;
			
			\begin{scope}[transparency group, opacity=0.2]
				\draw [line width=10, draw=green, line cap=round, shift={(0:1)}, rounded corners] (60:1) -- +(120:1) -- (60:2) -- (0,0) -- (180:1) -- (120:1) --(0,0);
				\draw [line width=10, draw=green, line cap=round, shift={(120:1)}, shift={(60:1)}, rounded corners] (120:1) -- +(180:1) -- (120:2) -- (0,0) -- (240:1) -- (180:1) -- (0,0);
				\draw [line width=10, draw=green, line cap=round, shift={(180:2)}, rounded corners] (240:1) -- +(300:1) -- (240:2) -- (0,0) -- (300:1) -- +(60:1) -- (0,0);
				\draw [line width=10, draw=green, line cap=round, rounded corners] (0,0) -- (120:1) -- (180:1) -- (0,0);
			\end{scope}
			
			\begin{scope}[shift={(120:1)}]
				\draw (0,0) -- (60:1) -- (120:1) -- cycle;
				
				\begin{scope}[shift={(60:1)}]
					\draw[color=blue] (0,0) -- (60:1) -- (120:1) -- cycle;
					
					\begin{scope}[shift={(60:1)}]
						\draw (0,0) -- (0:1) -- (60:1) -- cycle;
						
						\begin{scope}[shift={(60:1)}]
							\draw[color=blue] (0,0) -- (60:1) -- (120:1) -- cycle;

							\begin{scope}[transparency group, opacity=0.2]
								\draw [line width=10, draw=blue, line cap=round, shift={(240:1)}, rounded corners] (180:1) -- +(120:1) -- (180:2) -- (0:1) -- (60:1) -- (0,0);
								\draw [line width=10, draw=blue, line cap=round, shift={(60:1)}, rounded corners] (0:2) -- (0:3) -- +(240:1) -- (0:2) -- (0,0) -- (60:1) -- (0:1);
								\draw [line width=10, draw=blue, line cap=round, shift={(120:2)}, rounded corners] (60:1) -- +(0:1) -- (60:2) -- (240:1) -- +(0:1) -- (0,0);
								\draw [line width=10, draw=blue, line cap=round, rounded corners] (0,0) -- (60:1) -- (120:1) -- (0,0);
							\end{scope}
							
							\begin{scope}[shift={(60:1)}]
								\draw (0,0) -- (0:1) -- (60:1) -- cycle;
							
								\begin{scope}[shift={(0:1)}]
									\draw[color=blue] (0,0) -- (0:1) -- (60:1) -- cycle;

									\begin{scope}[transparency group, opacity=0.2]
										\draw [line width=10, draw=red, line cap=round, rounded corners]
										(0:1) -- (0:2) -- +(240:1) -- (60:1) -- +(120:1) -- (60:2) -- (60:1);
									\end{scope}
								
									\begin{scope}[shift={(0:1)}]
										\draw (0,0) -- (0:1) -- (300:1) -- cycle;
									\end{scope}
									
									\begin{scope}[shift={(60:1)}]
										\draw (0,0) -- (60:1) -- (120:1) -- cycle;
									\end{scope}
								\end{scope}
							\end{scope}

							\begin{scope}[shift={(120:1)}]
								\draw (0,0) -- (120:1) -- (180:1) -- cycle;
							
								\begin{scope}[shift={(120:1)}]
									\draw[color=blue] (0,0) -- (60:1) -- (120:1) -- cycle;

									\begin{scope}[transparency group, opacity=0.2]
										\draw [line width=10, draw=red, line cap=round, rounded corners]
										(60:1) -- (60:2) -- +(300:1) -- (120:1) -- +(180:1) -- (120:2) -- (120:1);
									\end{scope}
								
									\begin{scope}[shift={(60:1)}]
										\draw (0,0) -- (0:1) -- (60:1) -- cycle;
									\end{scope}
									
									\begin{scope}[shift={(120:1)}]
										\draw (0,0) -- (120:1) -- (180:1) -- cycle;
									\end{scope}
								\end{scope}
							\end{scope}
						\end{scope}
					\end{scope}
					
					\begin{scope}[shift={(120:1)}]
						\draw (0,0) -- (120:1) -- (180:1) -- cycle;
					\end{scope}
				\end{scope}
			\end{scope}

			\begin{scope}[shift={(180:1)}]
				\draw (0,0) -- (180:1) -- (240:1) -- cycle;
				
				\begin{scope}[shift={(180:1)}]
					\draw[color=blue] (0,0) -- (180:1) -- (240:1) -- cycle;
					
					\begin{scope}[shift={(180:1)}]
						\draw (0,0) -- (120:1) -- (180:1) -- cycle;
						
						\begin{scope}[shift={(180:1)}]
							\draw[color=blue] (0,0) -- (120:1) -- (180:1) -- cycle;

							\begin{scope}[transparency group, opacity=0.2]
								\draw [line width=10, draw=blue, line cap=round, shift={(0:1)}, rounded corners] (300:1) -- +(240:1) -- (300:2) -- (120:1) -- (180:1) -- (0,0);
								\draw [line width=10, draw=blue, line cap=round, shift={(120:1)}, rounded corners] (60:2) -- (60:3) -- +(300:1) -- (60:2) -- (0,0) -- (120:1) -- (60:1);
								\draw [line width=10, draw=blue, line cap=round, shift={(180:2)}, rounded corners] (120:1) -- +(60:1) -- (120:2) -- (300:1) -- +(60:1) -- (0,0);
								\draw [line width=10, draw=blue, line cap=round, rounded corners] (0,0) -- (120:1) -- (180:1) -- (0,0);
							\end{scope}
							
							\begin{scope}[shift={(120:1)}]
								\draw (0,0) -- (60:1) -- (120:1) -- cycle;
							
								\begin{scope}[shift={(60:1)}]
									\draw[color=blue] (0,0) -- (60:1) -- (120:1) -- cycle;

									\begin{scope}[transparency group, opacity=0.2]
										\draw [line width=10, draw=red, line cap=round, rounded corners]
										(60:1) -- (60:2) -- +(300:1) -- (120:1) -- +(180:1) -- (120:2) -- (120:1);
									\end{scope}
								
									\begin{scope}[shift={(60:1)}]
										\draw (0,0) -- (0:1) -- (60:1) -- cycle;
									\end{scope}
									
									\begin{scope}[shift={(120:1)}]
										\draw (0,0) -- (120:1) -- (180:1) -- cycle;
									\end{scope}
								\end{scope}
							\end{scope}

							\begin{scope}[shift={(180:1)}]
								\draw (0,0) -- (180:1) -- (240:1) -- cycle;
							
								\begin{scope}[shift={(180:1)}]
									\draw[color=blue] (0,0) -- (120:1) -- (180:1) -- cycle;
								
									\begin{scope}[shift={(120:1)}]
										\draw (0,0) -- (60:1) -- (120:1) -- cycle;
									\end{scope}
									
									\begin{scope}[shift={(180:1)}]
										\draw (0,0) -- (120:1) -- (180:1) -- cycle;
									\end{scope}
								\end{scope}
							\end{scope}
						\end{scope}
					\end{scope}
					
					\begin{scope}[shift={(240:1)}]
						\draw (0,0) -- (240:1) -- (300:1) -- cycle;
					\end{scope}
				\end{scope}
			\end{scope}
		\end{scope}

		\begin{scope}[shift={(-15,0)}]
			\draw[color=blue] (0,0) -- (120:0.8);
			\draw[color=blue] (0,0) -- (180:0.8);
		\end{scope}
		
		\end{tikzpicture}
		
		\caption{Supports of eigenfunctions of $M_\xi$ of eigenvalue $\frac{1}{4}(1 \pm \sqrt{6})$ for the Fabrykowski-Gupta group.}
		\label{fig:gupta-fabrykowski-fups-2}
	\end{figure}
\end{center}

We conclude the section with the proof of Theorem~\ref{thm:Kesten_spectral_measure}. For that, we need to extend the notion of antisymmetric functions to $\Gamma_\xi$, in a way similar to what is done in~\cite{BGJRT19}. We define the space of antisymmetric functions on $\Gamma_n$ by
\[
\ellan = \langle f \in \elln \mid \exists i \in \{1, \dots, d-2\},\quad f = -f \circ \Phi^i_n \rangle.
\]


\begin{lem}
\label{lem:ellan}
$\forall n \ge 1$,
	\[
	\ellan = \left\langle \bigsqcup_{N=1}^{n-1} \bigsqcup_{\substack{\lambda \in \spec(M_N) \\ \lambda \not\in \spec(M_{N-1})}} \bigsqcup_{i=1}^{d-2} (\rho^{i+1}_{n-1} - \rho^i_{n-1})(\F_{\lambda, n-1}\setminus \F_{\lambda, n-1}^B) \quad \sqcup \quad \bigsqcup_{\substack{\lambda \in \spec(M_n) \\ \lambda \not\in \spec(M_{n-1})}} \F_{\lambda, n}\setminus \F_{\lambda, n}^B \right\rangle.
	\]
\end{lem}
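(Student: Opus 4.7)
The plan is to establish RHS $\subseteq \ellan$, match dimensions on both sides (both equal $(d-2)d^{n-1}$), and verify linear independence of the RHS generators; these three facts together force equality. For the inclusion, denote by $V_j \subset X^n$ the set of words of length $n$ ending in letter $j$. For each $i \in \{1,\dots,d-2\}$ and any $g \in \ell^2_{n-1}$, the function $(\rho^{i+1}_{n-1}-\rho^i_{n-1})(g)$ equals $g$ on $V_{i+1}$ (via the natural identification), equals $-g$ on $V_i$, and vanishes elsewhere, hence is $\Phi^i_n$-antisymmetric. The elements of $\F_{\lambda,n}\setminus\F_{\lambda,n}^B$ in the second block are $\Phi^i_n$-antisymmetric for some $i \in \{1,\dots,d-2\}$ by Proposition~\ref{prop:basis_MN}.

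For $\dim\ellan$, decompose $\elln = \bigoplus_{j \in X} \ell^2(V_j)$. Any $f \in \ellan$ vanishes on $V_0$, since no $\Phi^i_n$ with $i \ge 1$ acts on $V_0$, and writing $f = \sum_{i=1}^{d-2}(\rho^{i+1}_{n-1}-\rho^i_{n-1})(g_i)$ yields the telescoping constraint $\sum_{j=1}^{d-1} f|_{V_j} = 0$, giving $\dim\ellan = (d-2)d^{n-1}$. For the RHS: removing $\F_{\lambda,n-1}^B$ (one vector) for each new eigenvalue $\lambda$ at each level $1 \le N \le n-1$, together with the constant function $\F_{1,n-1}$, deletes $1 + \sum_{N=1}^{n-1} m_N = 1 + (2^{n-1}-1) = 2^{n-1}$ basis vectors of $\ell^2_{n-1}$ (where $m_N = 2^{N-1}$ counts new eigenvalues at level $N$); the first block therefore contributes $(d-2)(d^{n-1} - 2^{n-1})$ generators, the second block contributes $(d-2)\,m_n = (d-2)\,2^{n-1}$ generators, and the total is $(d-2)d^{n-1}$.

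Linear independence has three components. Within the first block, if $\sum_{i,N,\lambda,f} c_{i,N,\lambda,f}(\rho^{i+1}_{n-1}-\rho^i_{n-1})(f) = 0$, set $g_i := \sum_{N,\lambda,f} c_{i,N,\lambda,f} f$; the restrictions to $V_1$ and $V_{d-1}$ immediately give $g_1 = g_{d-2} = 0$, the intermediate equations $g_{j-1} = g_j$ force every $g_i = 0$, and linear independence of the chosen basis subset of $\ell^2_{n-1}$ then kills all coefficients. Within the second block, distinct $\lambda$ lie in different $M_n$-eigenspaces, and for each fixed $\lambda$ the vectors are part of the basis produced by Proposition~\ref{prop:basis_MN}. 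To separate the two blocks, observe that every first-block generator is a $\lambda$-eigenfunction of $M_n$ for some $\lambda \in \spec(M_{n-1})$: the key identity from the proof of Proposition~\ref{prop:eigenfunctions_finite} gives $(M_n - \lambda)\rho^i_{n-1}(f)(vj) = 0$ for $v \ne (d-1)^{n-2}0$, and the remaining obstruction is proportional to $f((d-1)^{n-2}0)$, which vanishes whenever $f \notin \F_{\lambda,n-1}^B$ by the Remark; the second block consists of eigenfunctions for eigenvalues new to level $n$, so the two blocks lie in orthogonal eigenspaces of $M_n$. I expect the most delicate step to be the telescoping dimension count of the first block, where one must track carefully that exactly the right basis vectors (the $\F_{\lambda,n-1}^B$'s together with the constant) are removed to produce the total $2^{n-1}$ via $m_N = 2^{N-1}$.
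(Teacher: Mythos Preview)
Your proof is correct and follows the same overall strategy as the paper: show the right-hand side is contained in $\ellan$, count that both sides have dimension $(d-2)d^{n-1}$, and verify linear independence. The paper's own proof is terser---it simply asserts $\dim\ellan=(d-2)d^{n-1}$ and says linear independence follows ``inductively and using the fact that we know their supports''---whereas you supply explicit arguments for both points. Your upper bound on $\dim\ellan$ via the constraints $f|_{V_0}=0$ and $\sum_{j\ge1}f|_{V_j}=0$ is a clean justification the paper omits, and your separation of the two blocks via eigenspaces (first-block generators are $M_n$-eigenfunctions for $\lambda\in\spec(M_{n-1})$ because $f((d-1)^{n-2}0)=0$ kills the obstruction term, second-block for new $\lambda$) is a tidy alternative to the support argument. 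The enumeration $m_N=2^{N-1}$ and the telescoping count $(d-2)(d^{n-1}-2^{n-1})+(d-2)2^{n-1}$ match the paper's computation exactly.
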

\begin{proof}
	First notice that all the functions on the right hand side belong to $\ellan$ by construction. Indeed, let $1 \le N \le n-1$ and $\lambda\in \spec(M_N)\setminus\spec(M_{N-1})$. Let also $f \in \F_{\lambda, n-1}\setminus \F_{\lambda, n-1}^B$ and $1 \le i \le d-2$. On one hand we have
	\[
	(\rho^{i+1}_{n-1} - \rho^i_{n-1})f \circ \Phi^i_n (v_0\dots v_{n-1}) = (\rho^{i+1}_{n-1} - \rho^i_{n-1})f (v_0\dots v_{n-2}\tau_i(v_{n-1})) =
	\]\[
	= f(v_0 \dots v_{n-2})(\delta_{i+1, \tau_i(v_{n-1})} - \delta_{i, \tau_i(v_{n-1})}). 
	\]
	On the other hand,
	\[
	(\rho^{i+1}_{n-1} - \rho^i_{n-1})f (v_0\dots v_{n-1}) = f(v_0 \dots v_{n-2})(\delta_{i+1, v_{n-1}} - \delta_{i, v_{n-1}}).
	\]
	If $v_{n-1} \neq i, i+1$, then $\tau_i(v_{n-1}) = v_{n-1}$ and so both expressions vanish. Otherwise, $\tau_i$ exchanges $i$ and $i+1$ and so the latter equals the former with opposite sign.

	If we now take $\lambda \in \spec(M_n)\setminus\spec(M_{n-1})$ and $f \in \F_{\lambda, n}\setminus \F_{\lambda, n}^B$, the fact that $f$ is antisymmetric follows from Proposition~\ref{prop:basis_MN}.

	Finally, we will check that the dimension of both subspaces is the same. We know that $\dim(\ellan) = (d-2)d^{n-1}$ and that the functions on the right hand side are linearly independent, again inductively and using the fact that we know their supports. Their number is:
	\[
	\sum_{N=1}^{n-1} \sum_{\substack{\lambda \in \spec(M_N) \\ \lambda \not\in \spec(M_{N-1})}} \sum_{i=1}^{d-2} |\F_{\lambda, n-1}\setminus \F_{\lambda, n-1}^B| \quad + \quad \sum_{\substack{\lambda \in \spec(M_n) \\ \lambda \not\in \spec(M_{n-1})}} |\F_{\lambda, n}\setminus \F_{\lambda, n}^B| =
	\]\[
	= \sum_{N=1}^{n-1} \sum_{\substack{\lambda \in \spec(M_N) \\ \lambda \not\in \spec(M_{N-1})}} \sum_{i=1}^{d-2} (d-2)d^{n-1-N} \quad + \quad \sum_{\substack{\lambda \in \spec(M_n) \\ \lambda \not\in \spec(M_{n-1})}} (d-2) =
	\]\[
	= (d-2)^2\sum_{N=1}^{n-1} d^{n-1-N} |\spec(M_N)\setminus\spec(M_{N-1})|  \quad + \quad (d-2) |\spec(M_n)\setminus\spec(M_{n-1})| =
	\]\[
	= (d-2)^2\sum_{N=1}^{n-1} d^{n-1-N} 2^{N-1} + (d-2) 2^{n-1} =
	\]\[
	= (d-2)(d^{n-1} - 2^{n-1}) + (d-2) 2^{n-1} =
	\]\[
	= (d-2)d^{n-1} = \dim(\ellan).
	\]
\end{proof}

Let $P_n: \ell^2 \to \ell^2$ be the orthogonal projector to the subspace of functions supported in $\tilde{\Gamma}_n := \tilde{\iota}_n(\Gamma_n) = X^n\sigma^n(\xi)$, so that $P_nf(\eta) = f(\eta) \chi_{\tilde{\Gamma}_n}(\eta)$. Let also $P_n': \ell^2 \to \elln$ be the operator defined by $P_n'f = f \circ \tilde{\iota}_n$. In addition, we define the space of antisymmetric functions on $\Gamma_\xi$ as follows:
\[
\ella = \langle f \in \ell^2 \mid \exists n\in I_\xi \quad \supp(f) \subset \tilde{\Gamma}_n, \quad P_n'f \in \ellan \rangle,
\]
where $I_\xi = \{ n \in \N \mid \forall r \ge 0, \: (d-1)^r0 \textrm{ is not a prefix of } \sigma^n(\xi) \}$ is the set of indices $n$ such that $\sigma^n(\xi)$ has trivial $B$-action. Equivalently, it is the set of indices for which $\tilde{\Gamma}_n$ is connected to the rest of $\Gamma_\xi$ by just one vertex.

\begin{lem}
	\label{lem:ella_in_spanF}
	$\ella$ is contained in $\langle\F\rangle$.
\end{lem}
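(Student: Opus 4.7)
The plan is to reduce the lemma, via Lemma~\ref{lem:ellan}, to showing that $\tilde{\rho}_n$ sends every basis element of $\ellan$ into $\langle\F\rangle$, and then to carry out that analysis using the recursive structure of $\F_{\lambda, n}^A, \F_{\lambda, n}^B, \F_{\lambda, n}^C, \F_{\lambda, n}^D$ together with the key telescoping identity $\tilde{\rho}_n \circ \rho^j_{n-1} = \tilde{\rho}_{n-1}$ whenever $j = \xi_{n-1}$. If $f$ is a generator of $\ella$, supported in $\tilde{\Gamma}_n$ for some $n \in I_\xi$ with $P_n'f \in \ellan$, then $f = \tilde{\rho}_n P_n'f$, so it suffices to prove the claim on basis elements.

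For a Type~2 basis element $g \in \F_{\lambda, n}\setminus\F_{\lambda, n}^B$ (with $\lambda$ first appearing at level $N=n$, so $\F_{\lambda, n}^D = \emptyset$), either $g \in \F_{\lambda, n}^C$ or $g \in \F_{\lambda, n}^A$. The condition $n \in I_\xi$ forces $\xi_n \neq 0$. In the first case, $\rho^j_n g \in \F_{\lambda, n+1}^D$ for every $j \in X$, so $\tilde{\rho}_n g = \tilde{\rho}_{n+1}(\rho^{\xi_n}_n g) \in \tilde{\rho}_{n+1}(\F_{\lambda, n+1}^D) \subseteq \F_\lambda$. In the second case, I distinguish $\xi_n \in \{1,\dots,d-2\}$ (where $\rho^{\xi_n}_n g \in \F_{\lambda, n+1}^C$ and one further telescoping step via $\rho^{\xi_{n+1}}_{n+1}$ lands the function in $\F_{\lambda, n+2}^D$) from $\xi_n = d-1$ (where $\rho^{d-1}_n g \in \F_{\lambda, n+1}^A$ and the argument iterates at level $n+1$, which is still in $I_\xi$ because $\xi_n = d-1$ together with $n \in I_\xi$ force $n+1 \in I_\xi$). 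Either the iteration halts at some level $m$ with $\xi_m \in \{1,\dots,d-2\}$ (reducing to the previous case), or it continues forever, in which case $\sigma^n(\xi) = (d-1)^\N$ and $\tilde{\rho}_n g = \tilde{\rho}_R(g')$ for some $g'\in\F_{\lambda, R}^A$ is included in $\F_\lambda$ by definition.

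For a Type~1 basis element $(\rho^{i+1}_{n-1} - \rho^i_{n-1})g$ with $g \in \F_{\lambda, n-1}\setminus\F_{\lambda, n-1}^B$ and $i \in \{1, \dots, d-2\}$, I split on the partition class of $g$. If $g \in \F_{\lambda, n-1}^C \sqcup \F_{\lambda, n-1}^D$, then $\rho^j_{n-1}g \in \F_{\lambda, n}^D$ for every $j$, so both terms $\tilde{\rho}_n\rho^{i+1}_{n-1}g$ and $\tilde{\rho}_n\rho^i_{n-1}g$ lie in $\tilde{\rho}_n(\F_{\lambda, n}^D) \subseteq \F_\lambda$, and so does their difference. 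If instead $g \in \F_{\lambda, n-1}^A$, then $\rho^j_{n-1}g \in \F_{\lambda, n}^C$ for $j \in \{1,\dots,d-2\}$ and $\rho^{d-1}_{n-1}g \in \F_{\lambda, n}^A$; in both cases the Type~2 analysis applied at level $n$ shows $\tilde{\rho}_n \rho^j_{n-1}g \in \langle\F\rangle$, hence so is the difference. Crucially, since $i$ ranges over $\{1,\dots,d-2\}$, the index $j = 0$ (which would have produced an $\F_{\lambda, n}^B$ element) never arises.

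The main technical obstacle is the careful bookkeeping of which pieces of $\F_{\lambda, n}$ absorb each of the operators $\rho^j_{n-1}$, especially for Subtype~A elements whose images under the $\rho^{d-1}$ tower are again in $\F_{\lambda, \cdot}^A$ and not automatically in $\F$: the hypothesis $n \in I_\xi$ is what ensures the iteration either terminates in a well-behaved class or reaches the spine, where the explicit inclusion $\tilde{\rho}_R(\F_{\lambda, R}^A)\subset\F_\lambda$ takes over. This is precisely the structural role played by the condition that $\tilde{\Gamma}_n$ is joined to the rest of $\Gamma_\xi$ only through the spine vertex $\tilde{\iota}_n((d-1)^n)$.
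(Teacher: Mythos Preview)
Your proof is correct and follows essentially the same approach as the paper: both reduce to the basis of $\ellan$ from Lemma~\ref{lem:ellan}, then push each basis element up through the telescoping identity $\tilde{\rho}_{n+1}\circ\rho^{\xi_n}_n = \tilde{\rho}_n$ until it lands in some $\F_{\lambda, m}^D$ (or in $\tilde{\rho}_R(\F_{\lambda, R}^A)$ along the spine). The only cosmetic difference is that the paper first fixes the number $r$ of leading $(d-1)$'s in $\sigma^n(\xi)$ and batches the iteration into a single jump to level $n+r+2$, whereas you iterate one step at a time; when you reuse the Type~2 analysis in Type~1, note that $R=\max\{r,N\}$ may be strictly less than $n$, but since $\xi_R=\cdots=\xi_{n-1}=d-1$ the same telescoping identifies $\tilde{\rho}_n(\F_{\lambda,n}^A)$ with $\tilde{\rho}_R(\F_{\lambda,R}^A)$.
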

\begin{proof}
	Let $f \in \ella$. Then there exists some $n \in I_\xi$ such that $\supp(f) \subset \tilde{\Gamma}_n$ and $P_n'f \in \ellan$. In particular, either there exists some $r \ge 0$ such that $\xi_n\dots \xi_{n+r} = (d-1)^rj$, with $j \neq 0, d-1$, or $\sigma^n(\xi) = (d-1)^\N$, in which case we set $r = \infty$. Let $T_n$ be the basis of $\ellan$ from Lemma~\ref{lem:ellan}. We first claim that for every $h \in T_n$, $\tilde{\rho}_nh \in \langle\F\rangle$.

	Indeed, let $1 \le N \le n-1$, $\lambda \in \spec(M_N)\setminus\spec(M_{N-1})$ and $i \in \{1, \dots, d-1\}$. Given a function $g \in \F_{\lambda, n-1}\setminus\F_{\lambda, n-1}^B$, since $i \neq 0$, we have $\rho^i_{n-1}g \in \F_{\lambda, n}\setminus\F_{\lambda, n}^B$. If $r=\infty$, then $\tilde{\rho}_n\rho^i_{n-1}g \in \F$ directly. Otherwise, then as $\xi_n\dots \xi_{n+r} = (d-1)^rj$, we have $\rho^{\xi_{n+r}}_{n+r}\dots\rho^{\xi_n}_n\rho^i_{n-1}g \in \F_{\lambda, n+r+1}^C\cup\F_{\lambda, n+r+1}^D$, and so $\rho^{\xi_{n+r+1}}_{n+r+1}\dots\rho^{\xi_n}_n\rho^i_{n-1}g \in \F_{\lambda, n+r+2}^D$, independently of the value of $\xi_{n+r+1}$. This means that $\tilde{\rho}_n\rho^i_{n-1}g = \tilde{\rho}_{n+r+2}\rho^{\xi_{n+r+1}}_{n+r+1}\rho^{\xi_n+r}_{n+r}\rho^i_{n-1}g \in \F$. Finally, for any generator $h \in T_n$ of the form $h = (\rho^{i+1}_{n-1} - \rho^i_{n-1})g = \rho^{i+1}_{n-1}g - \rho^i_{n-1}g$, we showed that $\tilde{\rho}_nh \in \langle\F\rangle$.

	Similarly, let $\lambda \in \spec(M_n)\setminus\spec(M_{n-1})$, and $g \in \F_{\lambda, n}\setminus\F_{\lambda, n}^B$. If $r = \infty$, then $\tilde{\rho}_ng \in \F$, and otherwise we have $\rho^{\xi_{n+r}}_{n+r}\dots\rho^{\xi_n}_ng \in \F_{\lambda, n+r+1}^C\cup\F_{\lambda, n+r+1}^D$, provided $\xi_n\dots \xi_{n+r} = (d-1)^rj$. Consequently, $\rho^{\xi_{n+r+1}}_{n+r+1}\dots\rho^{\xi_n}_ng \in \F_{\lambda, n+r+2}^D$, again independently of $\xi_{n+r+1}$. Hence, $\tilde{\rho}_ng = \tilde{\rho}_{n+r+2}\rho^{\xi_{n+r+1}}_{n+r+1}\dots\rho^{\xi_n}_ng \in \F$. Now, for every generator $h$ of $T_n$, of the form $h=g$, $\tilde{\rho}_nh$ must also be in $\langle\F\rangle$.

	To conclude, since $P_n'f \in \ellan$, let $P_n'f = \sum_i c_i h_i$, with $h_i \in T$. The support of $f$ is contained in $\tilde{\Gamma}_n$, so $f = \tilde{\rho}_nP_n'f = \sum_i c_i \tilde{\rho}_nh_i \in \langle\F\rangle$, and hence $\ella$ is contained in $\langle \F \rangle$.
\end{proof}


Our next step is to show that $\ella$ is dense in $\ell^2$. However, we are only able to do this under some extra conditions on $\xi \in X^\N$, which fortunately define a subset $W$ of uniform Bernoulli measure one in $X^\N$. Observe that the antisymmetric subspace $\ella$ is of infinite dimension iff the set $I_\xi = \{ n \in \N \mid \forall r \ge 0, \: (d-1)^r0 \textrm{ is not a prefix of } \sigma^n(\xi) \}$ is infinite. Equivalently, iff $\Gamma_\xi$ is one-ended.
For the proof of Theorem~\ref{thm:Kesten_spectral_measure}, we will in fact need a slightly stronger condition than $\Gamma_\xi$ being one-ended. We will need not only that $I_\xi$ is infinite, but also that it contains consecutive pairs $k$ and $k+1$ infinitely often. Let us consider the subset $W \subset X^\N$ defined as $W = \{\xi \in X^\N \mid k, k+1 \in I_\xi \textrm{ for infinitely many } k \}$. Note that this set only depends on $d$, and does not depend on $m$ nor on $\omega \in \Omega_{d, m}$.

\Kestenspectralmeasure*
\begin{proof}
	The set $W$ can be rewritten as
	\[
	W = \{ \xi \in X^\N \mid \forall l \ge 0,\: \exists k\ge l,\: k, k+1\in I_\xi\},
	\]
	and so its complement is $X^*Z$, with
	\[
	Z = \{ \xi \in X^\N \mid \forall k \ge 0,\: k \not\in I_\xi \textrm{ or } k+1 \not\in I_\xi\}.
	\]

	Notice that if, for some $k \in \N$, $\xi_k \neq 0, d-1$, then $k \in I_\xi$. Equivalently, for any $k \not\in I_\xi$, then necessarily $\xi_k = 0, d-1$. Therefore, for any point $\xi \in Z$, either $0 \not\in I_\xi$ or $1 \not\in I_\xi$, which implies that at least one of $\xi_0$, $\xi_1$ is $0, d-1$ (or both). Hence,
	\[
	\mu(Z) \le \left(1 - \left(\frac{d-2}{d}\right)^2\right)\mu(Z) \Longrightarrow \mu(Z) = 0.
	\]
	Finally, $\mu(W) = 1 - \mu(X^\N\setminus W) = 1 - \mu(X^*Z) = 1 - \mu(Z) = 1$.

	We will now prove that the set $\F$ is complete for every $\xi \in W$. Let $f\in \ell^2$ such that $f\perp \ella$. Now let $n$ be the smallest such that $\normell{P_nf} \ge \frac{4}{5}\normell{f}$ and both $n, n+1 \in I_\xi$, which exists as $\xi \in W$. Our goal is to define a function approximating $P_nf$ and antisymmetric. Because $P_nf$ concentrates the major part of the norm of $f$, and $f \perp \ella$, this will allow us to conclude that $f$ must be zero.

	Let $i \in \{1, \dots, d-2\}$ such that $\xi_n \in \{i, i+1\}$. Define $g := \rho^{\xi_n}_nP_n'f - \rho^{\xi_n}_nP_n'f \circ \Phi^i_{n+1} \in \ellann \subset \ellnn$ and also $h := \tilde{\rho}_{n+1}g \in \ell^2$. In fact, by construction, $h \in \ella$. Then,
	\[
	0 = \langle f, h \rangle_{\ell^2} = \langle P_{n+1}'f, g \rangle_\ellnn =
	\]\[
	= \langle P_{n+1}'f,\rho^{\xi_n}_nP_n'f \rangle_\ellnn - \langle P_{n+1}'f, \rho^{\xi_n}_nP_n'f \circ \Phi^i_{n+1} \rangle_\ellnn = 
	\]\[
	= \normellnn{\rho^{\xi_n}_nP_n'f}^2 - \langle P_{n+1}'f\circ \Phi^i_{n+1}, \rho^{\xi_n}_nP_n'f \rangle_\ellnn =
	\]\[
	= \normell{P_nf}^2 - \langle Q_nf, P_nf \rangle_{\ell^2},
	\]
	where $Q_nf(\eta) = f(\eta_0\dots\eta_{n-1}\tau_i(\eta_n)) \delta_{\sigma^n(\eta), \sigma^n(\xi)}$. Notice that this function is supported in $\tilde{\Gamma}_n$ and its values are those of $f$ on the subgraph $\tilde{\iota}_{n+1}\iota^{\tau_i(\xi_n)}_n(\Gamma_n)$. Therefore, its norm is not greater than the norm of $f - P_nf$, supported in $\Gamma_\xi\setminus\tilde{\Gamma}_n$, so $\normell{Q_nf} \le \normell{f - P_nf}$. Now we have, using the Cauchy-Schwarz inequality, 
	\[
	0 = \normell{P_nf}^2 - \langle Q_nf, P_nf \rangle_{\ell^2} \ge
	\]\[
	\ge \frac{4^2}{5^2}\normell{f}^2 - \normell{Q_nf}\normell{P_nf} \ge
	\]\[
	\ge \frac{4^2}{5^2}\normell{f}^2 - \normell{f - P_nf}\normell{f} \ge
	\]\[
	\ge \frac{4^2}{5^2}\normell{f}^2 - \sqrt{1-\frac{4^2}{5^2}}\normell{f}^2 =
	\]\[
	\ge \left(\frac{16}{25} - \frac{3}{5}\right)\normell{f}^2 = \frac{1}{25}\normell{f}^2.
	\]
	Hence $f = 0$.
\end{proof}

\section{Spectra of Cayley graphs}
\label{sec:spectra_cayley}

Any spinal group $G$ with $d=2$ has subexponential growth. This fact is trivial for $m=1$ and is proven in~\cite{Gri84} for $m=2$, and essentially the same proof extends for $m \ge 3$. Therefore any spinal group $G$ with $d = 2$ is amenable. In that case, it is known that $\spec(M_\xi) \subset \spec(G)$ for every $\xi \in X^\N$. Our goal now is to prove Theorem~\ref{thm:spectrum_cayley_d2}. The proof we provide is a modification of the proof given in~\cite{GD17}, where one of the directions of the proof uses a version of Hulanicki Theorem for graphs. For us it is enough to use the classical Hulanicki Theorem, which we now recall.

\begin{thm}[Hulanicki's Theorem]
	Let $G$ be a locally compact group, and let $\lambda_G$ be its left-regular representation. $G$ is amenable if and only if $\lambda_G$ weakly contains any other unitary representation of $G$.
\end{thm}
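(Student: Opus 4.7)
The plan is to establish the two implications separately, with the central ingredients being Fell's absorption principle and the Reiter--Kesten characterization of amenability, namely that $G$ is amenable if and only if the trivial one-dimensional representation $1_G$ is weakly contained in $\lambda_G$.

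For the direction $(\Leftarrow)$, if $\lambda_G$ weakly contains every unitary representation of $G$, then in particular it weakly contains $1_G$. By the Reiter--Kesten criterion this yields almost-invariant unit vectors in $L^2(G)$ (equivalently, an approximate identity of positive type in $L^1(G)$), which is one of the standard equivalent formulations of amenability for locally compact groups, so $G$ is amenable.

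For the direction $(\Rightarrow)$, the key step is Fell's absorption principle: for any unitary representation $(\pi, H_\pi)$ of $G$, the tensor product $\lambda_G \otimes \pi$ is unitarily equivalent to $\lambda_G \otimes \mathrm{Id}_{H_\pi}$, and hence to a multiple of $\lambda_G$. The intertwiner on $L^2(G)\otimes H_\pi$ is the unitary $W$ defined on simple tensors by $W(f\otimes v)(g)=f(g)\otimes\pi(g^{-1})v$; a direct computation gives $W\,(\lambda_G(h)\otimes\pi(h))\,W^{-1}=\lambda_G(h)\otimes\mathrm{Id}_{H_\pi}$. With this in hand, one writes $\pi\simeq 1_G\otimes\pi$, uses that amenability gives $1_G\prec\lambda_G$, and observes that weak containment is preserved under tensoring on the right with a fixed representation, chaining into
\[
\pi \;\simeq\; 1_G\otimes\pi \;\prec\; \lambda_G\otimes\pi \;\simeq\; \lambda_G^{\oplus\dim H_\pi} \;\sim\; \lambda_G,
\]
so $\pi\prec\lambda_G$ as required.

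The main technical point I would need to justify carefully is the stability of weak containment under tensoring on the right. Concretely, given almost-invariant unit vectors $\xi\in L^2(G)$ satisfying $|\langle\lambda_G(g)\xi,\xi\rangle-1|<\varepsilon$ uniformly on a compact $K\subset G$, for any unit $v\in H_\pi$ the tensor $\xi\otimes v$ produces matrix coefficients of $\lambda_G\otimes\pi$ that approximate those of $1_G\otimes\pi\simeq\pi$; applying the absorption unitary $W$ then exhibits these as matrix coefficients of a multiple of $\lambda_G$, realizing every matrix coefficient of $\pi$ as a uniform-on-compacta limit of matrix coefficients of $\lambda_G$. In the locally compact (non-discrete) case one replaces point masses by honest $L^2$-approximations and invokes strong continuity of $\pi$ to carry through the same chain, but this is routine once $W$ is in place.
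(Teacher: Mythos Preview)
The paper does not prove Hulanicki's Theorem at all: it is stated as a classical result and then applied in the proof of Theorem~\ref{thm:spectrum_cayley_d2}, with the discussion around it (the extension of $\lambda_G$ to $\C[G]$, the definition of weak containment, and the reference to \cite{dixmier}) serving only to fix notation. So there is no ``paper's own proof'' to compare against.

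Your argument is the standard one and is essentially correct. The backward implication via $1_G\prec\lambda_G$ is exactly the Kesten--Hulanicki--Reiter criterion. For the forward implication, Fell's absorption principle together with $1_G\prec\lambda_G$ and stability of weak containment under tensoring is the canonical route; your intertwiner $W$ is the right one, and the chain $\pi\simeq 1_G\otimes\pi\prec\lambda_G\otimes\pi\simeq\lambda_G^{\oplus\dim H_\pi}\sim\lambda_G$ is valid. The one place worth a sentence more care is the step $1_G\otimes\pi\prec\lambda_G\otimes\pi$: this is Fell's continuity of tensoring (weak containment is preserved when tensoring both sides by a fixed unitary representation), which you acknowledge but only sketch via matrix coefficients. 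In the locally compact setting the clean way is to observe that functions of positive type are closed under products and under uniform-on-compacta limits, so approximating the constant function $1$ by diagonal coefficients of $\lambda_G$ and multiplying by any coefficient of $\pi$ gives the required approximation. The final step $\lambda_G^{\oplus\kappa}\sim\lambda_G$ is immediate since any multiple of a representation is weakly equivalent to it. None of this is a gap; it is just the level of detail one would add in a textbook account.
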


For any group $G$, its left-regular representation $\lambda_G$ in $\ell^2(G)$ can be extended to the representation of the group algebra $\C[G]$ by bounded operators, setting, for every $t = \sum_{g\in G} c_g g \in \C[G]$, $\lambda_G(t) = \sum_{g\in G} c_g \lambda_G(g)$.

A unitary representation $\rho$ is \emph{weakly contained} in a unitary representation $\eta$ of $G$ (denoted $\rho \prec \eta$) if there exists a surjective homomorphism $C_\rho^* \twoheadrightarrow C_\eta^*$ mapping the operator $\eta(g)$ to $\rho(g)$, for every $g \in G$, where $C_\rho^*$ is the $C^*$-algebra generated by $\rho$. In~\cite{dixmier}, it is shown that $\rho \prec \eta$ if and only if $\spec(\rho(t)) \subset \spec(\eta(t))$, for every $t \in \C[G]$.

\begin{proof}[Proof of Theorem~\ref{thm:spectrum_cayley_d2}]
	Theorem~\ref{thm:spectrum_schreier} shows that $\spec(M_\xi) = \left\lbrack -\frac{1}{2^{m-1}}, 0 \right\rbrack \cup \left\lbrack 1 -\frac{1}{2^{m-1}}, 1 \right\rbrack$, for any $\xi \in X^\N$. Since $G$ is amenable, we know by Hulanicki's Theorem that $\lambda_{G/\Stab(\xi)} \prec \lambda_G$, where $\lambda_{G/\Stab(\xi)}$ is the quasi-regular representation of $G$ in $\ell^2(G/\Stab(\xi))$, for any $\xi \in X^\N$. Considering $M = \frac{1}{|S|}\sum_{s \in S}s \in \C[G]$, this implies that $\spec(\lambda_{G/\Stab(\xi)}(M)) \subset \spec(\lambda_G(M))$, or, equivalently, $\spec(M_\xi) \subset \spec(G)$, for any $\xi \in X^\N$.
	
	To prove the other inclusion, consider the element $t \in \C[G]$, defined as follows:
	\[
	t = \frac{2}{|B|} \sum\limits_{b\in B}b - 1.
	\]	
	We observe that $t^2 = 1$. Indeed,
	\[
	t^2 = \left(\frac{2}{|B|} \sum\limits_{b\in B}b - 1\right)^2 = \frac{4}{|B|^2}\sum\limits_{b\in B}\sum\limits_{b'\in B}bb' + 1 - \frac{4}{|B|}\sum\limits_{b\in B}b = \]\[ = \frac{4}{|B|^2}\sum\limits_{b\in B}\sum\limits_{c\in B}c + 1 - \frac{4}{|B|}\sum\limits_{b\in B}b = \frac{4}{|B|}\sum\limits_{c\in B}c + 1 - \frac{4}{|B|}\sum\limits_{b\in B}b = 1.
	\]
	
	It follows that the subgroup $D = \langle a, t \rangle$ of the group algebra $\C[G]$ is a dihedral group (in fact infinite), as $a^2 = t^2 = 1$.
	
	Let $\rho = \lambda_G|_{D}$ be the restriction of the regular representation to $D \subset \C[G]$. Since both $a$ and $t$ are involutions, $\rho(a)$ and $\rho(t)$ are unitary operators, and hence $\rho$ is a unitary representation. By Hulanicki's Theorem, provided that $D$ is amenable, we have that $\rho \prec \lambda_D$, where $\lambda_D$ is the regular representation of $D$ in $\ell^2(D)$. This implies that $\spec(\rho(m)) \subset \spec(\lambda_D(m))$ for every $m \in \C[G]$. Notice that $M = \frac{a}{2^m} + \frac{t}{2} + \frac{2^{m-1}-1}{2^m} \in \C[D]$, so we have $\spec(G) = \spec(\lambda_G(M)) = \spec(\rho(M)) \subset \spec(\lambda_D(M))$.
	
	We only have to compute the latter, which is not hard to do as it corresponds to the spectrum of the Markov operator associated to a random walk on $\Z$ with $2$-periodic probabilities. In particular, the probability of staying at a vertex is $\frac{2^{m-1}-1}{2^m}$, and the probabilities of moving to a neighbor are $2$-periodic of values $\frac{1}{2}$ and $\frac{1}{2^m}$.
	
	To find the spectrum of a $2$-periodic graph we can use the elements of Floquet-Bloch theory (see for instance~\cite{BK13}). Let $k \in [-\pi, \pi]$ be a frequency and $e^{ikn}$ be its corresponding wave function. Using the $2$-periodicity of the graph we build a $2\times 2$ matrix for each $k$, we find its eigenvalues and we take the closure of their union for all $k$. The computations are shown below, and lead to the relation $\left\lbrack -\frac{1}{2^{m-1}}, 0 \right\rbrack \cup \left\lbrack 1 -\frac{1}{2^{m-1}}, 1 \right\rbrack = \spec(M_\xi)$.
	
	\[
	0 = \left|\begin{matrix}
	\frac{2^{m-1}-1}{2^m} - x & \frac{1}{2^m} + \frac{1}{2}e^{-ik} \\[6pt]
	\frac{1}{2^m} + \frac{1}{2}e^{ik} & \frac{2^{m-1}-1}{2^m} - x
	\end{matrix}\right| = \left(\frac{2^{m-1}-1}{2^m} - x\right)^2 - \left( \frac{1}{4^m} + \frac{1}{4} + \frac{1}{2^m}\cos(k) \right).
	\]\[
	x = \frac{2^{m-1}-1}{2^m} \pm \frac{1}{2^m}\sqrt{4^{m-1} + 1 + 2^m\cos(k)}.
	\]\[
	\spec(\lambda_D(M)) = \bigcup_{k\in [-\pi, \pi]} \spec\left(\lambda_D(M)_k\right) = \]\[ = \frac{2^{m-1}-1}{2^m} \pm \frac{1}{2^m}[2^{m-1} - 1, 2^{m-1} + 1] = \left\lbrack -\frac{1}{2^{m-1}}, 0 \right\rbrack \cup \left\lbrack 1 -\frac{1}{2^{m-1}}, 1 \right\rbrack.
	\]
\end{proof}

We can therefore conclude in Corollary~\ref{cor:regular_spectrum_d2} that, for spinal groups, as for many other classes of groups, the spectrum of the Cayley graph does not determine the group.

\regularspectrumdtwo*
\begin{proof}
	Theorem~\ref{thm:spectrum_cayley_d2} shows that $\spec(G)$ for spinal groups acting on the binary tree depends only on $m$, one of the parameters in the definition of spinal groups. Hence, all spinal groups with $d = 2$ and a fixed $m \ge 1$ will share the same spectrum. For $m=2$, we obtain the family of groups defined by Grigorchuk in~\cite{Gri84}. This family contains uncountably many groups with different growth function, which is a quasi-isometric invariant. Hence, there are uncountably many isospectral groups which are pairwise non quasi-isometric.
\end{proof}

\section{Dependence of the spectrum on the generating set}
\label{sec:dependence_gen_set}

All the results discussed so far concerned spinal groups with the spinal generators $S = (A \cup B)\setminus\{1\}$. One might also wonder what are the spectra like if we consider different generating sets, for instance minimal ones.

For spinal groups acting on the binary tree ($d=2$), the infinite Schreier graphs $\Gamma_\xi$ have linear shape. The Schreier graphs of a minimal generating set can then be obtained by erasing double edges in the Schreier graph $\Gamma_\xi$ corresponding to the spinal generators $S$. This can be translated into considering a Markov operator on $\Gamma_\xi$ with non-uniform distribution of probabilities on $S$. The spectra of such anisotropic Markov operators were studied by Lenz, Sell and two first-named authors in~\cite{GLNS19}. Their results imply the following.



\begin{prop}
\label{prop:type_spectrum}
Let $G_\omega$ be a spinal group with $d=2$, $m \ge 2$ and $\omega \in \Omega_{2, m}$, and let $M_\xi^T$ be the Markov operator on the graph $\Gamma_\xi^T$, with generating set $T \subset S$. For $\pi \in \Epi(B, A)$, we define $q_\pi = |T \cap B \setminus \Ker(\pi)|$. Two cases may occur:
\begin{itemize}
\item If the numbers $q_\pi$ are not all equal over $\pi \in \Epi(B, A)$ appearing infinitely often in $\omega$, then $\spec(M_\xi^T)$ is a Cantor set of Lebesgue measure zero.
\item If not, $\spec(M_\xi^T)$ is a union of intervals.
\end{itemize}
\end{prop}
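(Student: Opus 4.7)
The plan is to realize $M_\xi^T$ as an anisotropic Markov operator on a linear graph, and then invoke the spectral dichotomy from~\cite{GLNS19}.

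First I would describe $\Gamma_\xi^T$ concretely. For $d=2$, the Schreier graph $\Gamma_\xi$ with respect to $S$ is a line (or half-line if $\xi$ is coorbital to $1^\N$) with loops and multi-edges, in which every non-loop edge comes either from $a$ or from an element $b \in B \setminus \Ker(\omega_n)$ for some level $n$ determined by the edge's position along the line. Passing from $S$ to $T$ deletes some of these edges but preserves the linear shape: at a vertex of level $n$, the $B$-action contributes $q_{\omega_n}$ parallel edges to the non-$a$-neighbor and $|T \cap B| - q_{\omega_n}$ loops, and, if $a \in T$, the $a$-edge provides one more edge to the other neighbor. The sequence of levels along the line follows the familiar dyadic Toeplitz pattern coming from the tree, so $M_\xi^T$ fits into the class of substitutional anisotropic Markov operators treated in~\cite{GLNS19}.

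Next I would apply the dichotomy. Since the local weights at a vertex of level $n$ are entirely encoded by $q_{\omega_n}$ and by whether $a \in T$, the sequence of weights along the line is governed by $\omega$; in particular, an epimorphism $\pi$ which appears only finitely often in $\omega$ modifies the graph only in a bounded region and hence does not affect the spectrum. If all $q_\pi$ for $\pi$ appearing infinitely often in $\omega$ coincide, the weights are asymptotically constant, the Schur-complement renormalization reduces to the isotropic one, and the spectrum is a finite union of intervals exactly as in the $d=2$ case of Theorem~\ref{thm:spectrum_schreier}. If on the contrary these $q_\pi$ are not all equal, the renormalization map is genuinely position-dependent, and~\cite{GLNS19} shows that its iterated preimages accumulate on a Cantor set of Lebesgue measure zero, which is precisely $\spec(M_\xi^T)$.

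The main obstacle is making this translation precise: identifying the combinatorial data that feeds into~\cite{GLNS19} (labels of edges, placement of loops, role of $a \in T$ versus $a \notin T$), checking that the non-degeneracy hypothesis in~\cite{GLNS19} is equivalent to the statement that the $q_\pi$ are not all equal over $\pi$ appearing infinitely often in $\omega$, and handling the half-line case $\xi \in G \cdot 1^\N$ so that the dichotomy still applies there. Once these identifications are made, the proposition is a direct consequence of the main theorems of~\cite{GLNS19}.
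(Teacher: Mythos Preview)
Your overall strategy matches the paper's: realize $M_\xi^T$ as a weighted nearest-neighbor operator on $\Z$, then split into the aperiodic case (handled by Corollary~7.2 of~\cite{GLNS19}) and a periodic case. The paper also begins by noting that necessarily $a \in T$, since otherwise $T \subset B$ generates an abelian group; you should not leave this as an open case distinction.

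There is, however, a genuine error in your treatment of the second bullet. You assert that an epimorphism $\pi$ appearing only finitely often in $\omega$ ``modifies the graph only in a bounded region'' and that when all relevant $q_\pi$ agree the weights are ``asymptotically constant''. This is false: the letter $\omega_i$ controls the $B$-edges at \emph{every} position in the arithmetic progression $2^{i+2}\Z + 2^{i+1}-1$, so even a single exceptional $\omega_i$ affects infinitely many edges of $\Gamma_\xi^T$. The correct observation, which the paper makes, is that if $q_{\omega_i}$ takes a common value for all large $i$ then the resulting sequence of edge weights on $\Z$ is \emph{periodic} (with period a power of $2$ determined by the exceptional indices), not a compact perturbation of a constant sequence. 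The paper then applies Floquet--Bloch theory to this periodic one-dimensional operator to conclude that the spectrum is a finite union of intervals. Your appeal to ``the Schur-complement renormalization reduces to the isotropic one'' and to Theorem~\ref{thm:spectrum_schreier} does not establish this, since that theorem only treats $T=S$, and in general the period is larger than $2$.
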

\begin{proof}
First notice that $a \in T$, or else $T$ would generate a finite group. We observe that we can relabel the vertices in $\Gamma_\xi^T$ by $\Z$ in such a way that the number of edges between them is the following:
\begin{itemize}
\item There is one $a$-edge between any vertex $v \in 2\Z$ and $v+1$.
\item There are $|T \cap B \setminus \Ker(\omega_0)| = q_{\omega_0}$ edges, between any vertex $v \in 4\Z + 1$ and $v+1$, and $|T \cap \Ker(\omega_0)|$ loops on each of $v$, $v+1$.
\item There are $|T \cap B \setminus \Ker(\omega_1)| = q_{\omega_1}$ edges between any vertex $v \in 8\Z + 3$ and $v+1$, and $|T \cap \Ker(\omega_1)|$ loops on each of $v$, $v+1$.

...

\item In general, for every $i \ge 0$, there are $|T \cap B \setminus \Ker(\omega_i)|$ edges between any vertex $v \in 2^{i+2}\Z + 2^{i+1} - 1$ and $v+1$, and $|T \cap \Ker(\omega_i)|$ loops on each of $v$, $v+1$.
\end{itemize}

Hence, the simple random walk on $\Gamma_\xi^T$ is given by a weighted random walk on $\Z$, defined by the following probabilities:
\begin{itemize}
\item Probability of $\frac{1}{|T|}$ of transitioning between any vertex $v \in 2\Z$ and $v+1$.
\item For every $i \ge 0$, probability of $\frac{q_{\omega_i}}{|T|}$ of transitioning between any vertex $v \in 2^{i+2}\Z + 2^{i+1} - 1$ and $v+1$.
\item For every $i \ge 0$, probability of $1 - \frac{q_{\omega_i}}{|T|} - \frac{1}{|T|}$ of staying at any vertex $v \in 2^{i+2}\Z + 2^{i+1} - 1$ or $v+1$.
\end{itemize}

These probabilities follow a periodic pattern if and only if the numbers $q_\pi$ are all equal for every $\pi \in \Epi(B, A)$ occurring infinitely often in $\omega$. If this is not the case, we may use Corollary 7.2 in~\cite{GLNS19} to obtain that $\spec(M_\xi^T)$ is a Cantor set of Lebesgue measure zero.

Suppose now that $q_\pi$ are all equal for every $\pi \in \Epi(B, A)$ occurring infinitely often in $\omega$, so the probabilities are periodic on $\Z$, let $l$ be that period. We may compute $\spec(M_\xi^T)$ with Floquet-Bloch theory. To do so, we first compute the spectrum of a fundamental domain, parametrized by $k\in[-\pi, \pi]$, with some boundary conditions. This gives a set of eigenvalues $\{x_1(k), \dots, x_l(k)\}$, which are the roots of a polynomial of degree $l$. These polynomials only depend on $\cos(k)$. Now $\spec(M_\xi^T)$ is just the union of these sets of eigenvalues for all $k\in[-\pi, \pi]$. Since the roots will vary continuously as $\cos(k) \in [-1, 1]$, this union will be a union of at least one and at most $l$ intervals.
\end{proof}

We already know from Theorem~\ref{thm:spectrum_schreier} that 
the second option in Proposition~\ref{prop:type_spectrum} is realized when $T=S$, the spinal generating set. The following result states that, except one degenerate example corresponding to $G_\omega = D_\infty$ ($d=2$, $m=1$), every spinal group on the binary tree has a generating set which gives a Cantor spectrum.

\generatingsetcantor*
\begin{proof}
Let $\pi, \pi' \in Epi(B, A)$ be two different epimorphisms occurring infinitely often in $\omega$. Recall that $B$ is a vector space over $\Z/2\Z$, and let $K = \Ker(\pi)$ and $K' = \Ker(\pi')$. We know that $[B:K] = [B:K'] = 2$, and $[K:K \cap K'] = 2$ because $\pi'$ surjects $K$ onto $A$ with kernel $K \cap K'$, since $K \neq K'$. Hence, we have $[B:K \cap K'] = 4$. In particular, we can choose $m-2$ elements $x_1, \dots, x_{m-2} \in K \cap K'$ which generate $K \cap K'$. Moreover, we can choose elements $y \in K \setminus K'$ and $y' \in K' \setminus K$ to complete the generating set to one of $K$ or $K'$, respectively, and such that $\{x_1, \dots, x_{m-2}, y, y'\}$ generate $B$.

If we now define $T = \{a, x_1, x_2, \dots, x_{m-2}, y, yy'\} \subset S$, it is clear that it is a minimal generating set for $G_\omega$, since $|T| = m+1$. Moreover, we have $q_\pi = |T \cap B\setminus K| = |\{yy'\}| = 1$ and $q_{\pi'} = |T \cap B \setminus K'| = |\{y, yy'\}| = 2$. By Proposition~\ref{prop:type_spectrum}, $\spec(M_\xi^T)$ is a Cantor set of Lebesgue measure zero.
\end{proof}

One can also find a condition on the generating set $T \subset S$ under which the spectrum on the Schreier graphs is one interval, for certain $G_\omega$.

\begin{prop}
Let $G_\omega$ be a spinal group with $d=2$, $m \ge 2$ and $\omega \in \Omega_{d,m}$, with generating set $T \subset S$. If $q_{\omega_i} = 1$ for every $i \ge 0$, then $\spec(M_\xi^T)$ is the interval $[1- \frac{4}{|T|}, 1]$.
\end{prop}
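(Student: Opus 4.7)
The plan is to apply the explicit description of the weighted graph $\Gamma_\xi^T$ given in the proof of Proposition~\ref{prop:type_spectrum} and to observe that the hypothesis collapses all non-loop edge weights to $1$. Recall from that proof that $\Gamma_\xi^T$ is a line whose vertices may be identified with $\Z$, with $a$-edges of weight $1$ between $(2k,2k+1)$ and $b$-edges of weight $q_{\omega_i}$ between the level-$i$ pair $(2^{i+2}k + 2^{i+1}-1,\, 2^{i+2}k + 2^{i+1})$, together with $|T|-1-q_{\omega_i}$ loops at each of the two endpoints of a level-$i$ $b$-pair. Under the assumption $q_{\omega_i}=1$ for every $i \ge 0$, every $b$-edge also has weight $1$ and every vertex (regardless of the level it sits in) carries exactly $|T|-2$ loops.

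Hence the weighted structure of $\Gamma_\xi^T$ is translation-invariant: it is precisely the lazy nearest-neighbour graph on $\Z$ in which every edge $(v,v+1)$ has weight $1$ and every vertex has $|T|-2$ loops. Writing $L,R$ for the left- and right-shift on $\ell^2(\Z)$, the Markov operator becomes the constant-coefficient operator
\[
M_\xi^T \;=\; \frac{1}{|T|}(L+R) + \frac{|T|-2}{|T|}\, I.
\]

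The spectrum is then read off by Fourier analysis on $\Z$: the Fourier transform $\ell^2(\Z) \to L^2(\mathbb{T})$ diagonalizes $L+R$ as multiplication by the real function $\theta \mapsto 2\cos\theta$, so $\spec(L+R)=[-2,2]$. Applying the affine transformation $x \mapsto \tfrac{x}{|T|} + \tfrac{|T|-2}{|T|}$ gives
\[
\spec(M_\xi^T) \;=\; \frac{1}{|T|}[-2,2] + \frac{|T|-2}{|T|} \;=\; \left[1-\frac{4}{|T|},\, 1\right],
\]
as claimed. The only genuine content of the argument is the observation that the hypothesis $q_{\omega_i}=1$ for all $i$ is exactly what is needed to turn the otherwise position-dependent weights and loop counts into the translation-invariant ones on $\Z$; once that is in hand, the spectrum is an immediate consequence of the Fourier calculation, and no obstacle remains.
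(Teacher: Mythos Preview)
Your proof is correct and follows essentially the same approach as the paper's: both use the description from Proposition~\ref{prop:type_spectrum} to see that the hypothesis $q_{\omega_i}=1$ makes the weighted walk on $\Z$ translation-invariant (period $1$), and then compute the spectrum by Fourier analysis. The paper phrases the last step as Floquet--Bloch theory with a one-vertex fundamental domain, which for period $1$ is exactly your Fourier diagonalization of $L+R$.
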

\begin{proof}
In the proof of Proposition~\ref{prop:type_spectrum}, we established that the simple random walk on $\Gamma_\xi^T$ is given by a weighted random walk on $\Z$. Since $q_{\omega_i} = 1$ for every $i \ge 0$, the probabilities can be simplified to:
\begin{itemize}
\item Probability $\frac{1}{|T|}$ of transitioning between any vertex $v \in \Z$ and $v+1$.
\item Probability $1 - \frac{2}{|T|}$ of staying at any vertex $v \in \Z$.
\end{itemize}
The fact that the probabilities are periodic allows us to use Floquet-Bloch theory in order to find $\spec(M_\xi^T)$, and since the period is $1$, the computation is rather simple. The only eigenvalue of a fundamental domain, parametrized by $k \in [-\pi, \pi]$, is the solution of the equation
\[
0 = 1 - \frac{2}{|T|} + \frac{1}{|T|}e^{ik} + \frac{1}{|T|}e^{-ik} - x  = 1 - \frac{2}{|T|} + \frac{2}{|T|}\cos(k) - x \Longrightarrow x = 1 - \frac{2}{|T|} + \frac{2}{|T|}\cos(k).
\]
Finally,
\[
\spec(M_\xi^T) = \bigcup_{k\in [-\pi, \pi]} \spec(M_\xi^T(k)) = \bigcup_{k\in [-\pi, \pi]} \left\{ 1 - \frac{2}{|T|} + \frac{2}{|T|}\cos(k) \right\} = \left[1-\frac{4}{|T|}, 1\right].
\]
\end{proof}

Self-similar groups inside the family of spinal groups were studied by \v{S}uni\'c~\cite{sunic}. For every $d$ and $m$, there are finitely many of them. They can be specified in terms of an epimorphism $\alpha \in \Epi(B, A)$ and an automorphism $\rho \in \Aut(B)$. The groups in \v{S}uni\'c's family are then the spinal groups defined by the periodic sequence $\omega = (\omega_n)_n$ given by $\omega_n = \alpha \rho^n$. Moreover, it was shown that any of these groups admits a natural minimal \v{S}uni\'c generating set $T = \{a, b_1, \dots, b_m\}$, contained in the spinal generating set $S$, such that
\[
a = (1,1)\sigma \quad b_1 = (1, b_2) \quad b_2 = (1, b_3) \quad \dots \quad b_{m-1} = (1, b_m) \quad b_m = (a, b'),
\]
for some $b' \in B$. Notice that, for $i = 1, \dots, m-1$, $\alpha(b_i) = 1$ and $\rho(b_i) = b_{i+1}$, while $\alpha(b_m) = a$ and $\rho(b_m) = b'$. The choice of this $b' \in B$ in such a way that $\rho$ is an automorphism will then determine the group. It was shown in~\cite{sunic} that a \v{S}uni\'c group is infinite torsion if and only if all $\rho$-orbits intersect $\Ker(\alpha)$.

\begin{ex}[Grigorchuk's group]
	Grigorchuk's group is the group $G$ in \v{S}uni\'c's family with $d=2$, $m=2$, $A = \{1, a\}$, $B = \{1, b_1, b_2, b_1b_2\}$ and $\rho(b_2) = b_1b_2$. With the standard notation $b, c, d$ for the generators, we have $b_1 = d$, $b_2 = b$ and $b_1b_2 = c$. The only nontrivial $\rho$-orbit is $b_1 \mapsto b_2 \mapsto b_1b_2 \mapsto b_1$, which intersects $\Ker(\alpha)$ at $b_1$, hence the group is infinite torsion. The minimal \v{S}uni\'c generating set is $T = \{a, b_1, b_2\}$ and the spinal generating set is $S = \{a, b_1, b_2, b_1b_2\}$, with
	\[
	a = (1,1)\sigma \quad b_1 = (1, b_2) \quad b_2 = (a,b_1b_2) \quad b_1b_2 = (a, b_1).
	\]
	We have
	\[
	\spec(M_\xi^S) = \spec(G, S) = \left[-\frac{1}{2}, 0\right]\cup\left[\frac{1}{2}, 1\right].
	\]
	We may consider any of the minimal generating sets $T_{b_1} = \{a, b_2, b_1b_2\}$, $T_{b_2} = \{a, b_1, b_1b_2\}$ or $T = \{a, b_1, b_2\}$. In that case, all of $\spec(M_\xi^{T_{b_1}})$, $\spec(M_\xi^{T_{b_2}})$ and $\spec(M_\xi^{T})$ are Cantor sets, for any $\xi \in X^\N$. It would be interesting to know, for these minimal generating sets, what is the spectrum on the Cayley graph. So far, we only know it must contain this Cantor set.
\end{ex}

\begin{ex}
One natural choice in the construction of \v{S}uni\'c groups above is to take $d=2$ and $\rho$ such that $b' = b_1$. This gives one group for each $m \ge 2$, we call them $G_m$. We have
\[
a = (1,1)\sigma \quad b_1 = (1, b_2) \quad b_2 = (1, b_3) \quad \dots \quad b_{m-1} = (1, b_m) \quad b_m = (a, b_1).
\]
The element $a b_1 \dots b_m$ is of infinite order. We consider two generating sets: the spinal generating set $S = (A\cup B)\setminus\{1\}$, of size $2^m$, and the \v{S}uni\'c minimal generating set $T = \{a, b_1, \dots, b_m\}$, of size $m+1$. On the one hand, Theorem~\ref{thm:spectrum_cayley_d2} yields that, for any $\xi \in X^\N$,
\[
\spec(M_\xi^S) = \spec(G_m, S) = \left[-\frac{1}{2^{m-1}}, 0\right] \cup \left[1 - \frac{1}{2^{m-1}}, 1\right].
\]
On the other hand,
\[
\spec(M_\xi^T) = \left[\frac{m-3}{m+1}, 1\right].
\]
Indeed, for any two-ended $\Gamma_\xi^T$, the simple random walk translates into the weighted random walk on $\Z$ with probability $\frac{1}{m+1}$ of moving to a neighbor and probability $\frac{m-1}{m+1}$ of staying on any vertex. By taking a one-vertex fundamental domain parametrized by $k \in [-\pi, \pi]$ and using Floquet-Bloch theory, we have:
\[
0 = \frac{m-1}{m+1} + \frac{1}{m+1}(e^{ik} + e^{-ik}) - x \Longrightarrow x = \frac{1}{m+1}(m-1 + 2\cos(k))
\]
\[
\spec\left(M_\xi^T\right) = \bigcup_{k\in [-\pi, \pi]} \spec\left(M_\xi^T(k)\right) = \bigcup_{k\in [-\pi, \pi]} \left\{ \frac{1}{m+1}(m-1 + 2\cos(k)) \right\} = \left[\frac{m-3}{m+1}, 1\right].
\]

\begin{prop}
\label{prop:sunic-G-m}

Let $G$ be a \v{S}uni\'c group with $d=2$ and $m \ge 2$, with minimal \v{S}uni\'c generating set $T$. Then the spectrum on the Schreier graph with respect to $T$ is $\left[\frac{m-3}{m+1}, 1\right]$ if $G = G_m$ or a Cantor set of zero Lebesgue measure otherwise.
\end{prop}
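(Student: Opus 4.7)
The plan is to combine Proposition~\ref{prop:type_spectrum} with the (unlabeled) interval-spectrum proposition above, which gives $\spec(M_\xi^T) = \left[1 - \tfrac{4}{|T|}, 1\right]$ whenever $q_{\omega_i} = 1$ for every $i \ge 0$. A \v{S}uni\'c group has periodic defining sequence $\omega_n = \alpha\rho^n$, so every $\alpha\rho^n$ appears infinitely often in $\omega$, and Proposition~\ref{prop:type_spectrum} reduces the question to whether the integers
\[
q_n := q_{\alpha\rho^n} = |T \cap B \setminus \Ker(\alpha\rho^n)|
\]
are all equal as $n$ varies. Since $\alpha(b_i) = 1$ for $i < m$ and $\alpha(b_m) = a$, one immediately reads $q_0 = 1$, so the task is to characterize when $q_n = 1$ for all $n \ge 0$.

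Since $T \cap B = \{b_1, \ldots, b_m\}$ has cardinality $m = \dim_{\mathbb{F}_2} B$ and generates $B$, it is a basis of the $\mathbb{F}_2$-vector space $B$. Letting $b_1^*, \ldots, b_m^*$ denote the dual basis, the integer $q_n$ is exactly the Hamming weight of $\alpha\rho^n \in B^*$ in this dual basis; in particular $q_n = 1$ if and only if $\alpha\rho^n$ is itself one of the $b_j^*$. A direct computation from $\rho(b_i) = b_{i+1}$ ($i < m$) and $\rho(b_m) = b'$ yields the transpose formulas
\[
\rho^* b_j^* = b_{j-1}^* + b_j^*(b')\, b_m^* \quad (j \ge 2), \qquad \rho^* b_1^* = b_1^*(b')\, b_m^*.
\]

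Starting from $\alpha = b_m^*$ and iterating $\rho^*$, I would run an induction on $j = m, m-1, \ldots, 2$ showing that keeping the orbit inside $\{b_1^*, \ldots, b_m^*\}$ forces $b_j^*(b') = 0$ at every stage, and then the cycle-closing step at $j = 1$ forces $b_1^*(b') = 1$. Thus $q_n = 1$ for all $n$ if and only if $b' = b_1$, which is precisely the defining condition of $G_m$; in that case the orbit is the cyclic chain $b_m^* \to b_{m-1}^* \to \cdots \to b_1^* \to b_m^*$, of constant weight one.

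The conclusion then assembles cleanly: if $G = G_m$ the interval-spectrum proposition gives $\spec(M_\xi^T) = \left[\tfrac{m-3}{m+1}, 1\right]$, agreeing with the explicit Floquet--Bloch computation preceding the statement; if $G \ne G_m$ then some $q_n$ differs from $q_0 = 1$, and Proposition~\ref{prop:type_spectrum} delivers a Cantor set of Lebesgue measure zero. No real obstacle is expected: the whole argument reduces to the short dual-basis linear-algebra computation over $\mathbb{F}_2$, the mildly delicate point being only the bookkeeping that forces each $b_j^*(b')$ to vanish in turn.
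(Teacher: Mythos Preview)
Your argument is correct and follows essentially the same route as the paper: both reduce to Proposition~\ref{prop:type_spectrum}, compute $q_{\omega_0}=1$ from $\alpha(b_i)=1$ for $i<m$, and then show that $q_{\omega_n}=1$ for all $n$ forces $b'=b_1$. Your dual-basis bookkeeping (tracking the $\rho^*$-orbit of $b_m^*$) is a clean way to carry out exactly the step the paper does by using $\rho^k(b_{m-k})=b_m$ to pin down $\Ker(\omega_k)$ and then read off the coordinates of $b'$.
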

\begin{proof}
We found above the spectrum on the Schreier graphs for the groups $G_m$. Suppose now that $\spec(M_\xi^T)$ is a union of intervals. By Proposition~\ref{prop:type_spectrum}, we have that the numbers $q_\pi$ are all equal over $\pi \in \Epi(B, A)$ occurring infinitely often in $\omega$. By definition of the minimal \v{S}uni\'c generating set $T$, we know that $q_{\omega_0} = m-1$, so, as $\omega$ is periodic, $q_{\omega_n} = m-1$ for every $n \ge 0$.

Now, for any $k = 0, \dots, m-1$, we know that $\omega_k(b_{m-k}) = \omega_0\rho^{k}(b_{m-k}) = \omega_0(b_m) = a$, so $b_{m-k} \not\in \Ker(\omega_k)$. As $q_{\omega_k} = m-1$, the only possibility is that, for every $j = 0, \dots, m-1$, $b_j \in \Ker(\omega_k)$ if and only if $j \neq k$. In particular, this implies that $b_m = (a, b_1)$, so that we are in fact in the case of the group $G_m$.
\end{proof}

We also have

\begin{lem}
The Cayley graph of $G_m$ with generating set $T$ is bipartite, for any $m \ge 2$.
\end{lem}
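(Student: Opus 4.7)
The plan is to produce a group homomorphism $\phi\colon G_m \to \Z/2\Z$ sending every element of $T$ to $1$. The level sets $\ker(\phi)$ and $G_m\setminus\ker(\phi)$ then give the desired bipartition of $\Cay(G_m,T)$, since any $T$-edge, being right-multiplication by a generator, flips the value of $\phi$. The level-one action $\pi\colon G_m \to \Z/2\Z$ (for which $\pi(a)=1$ and $\pi(b_i)=0$) already handles $a$ but is blind to every $b_i$, so I will combine $\pi$ with analogous homomorphisms extracted from sections at deeper levels of the tree.

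Since each $G_m$ is self-similar, every $g\in G_m$ has a well-defined section $g|_w \in G_m$ at every vertex $w\in X^*$. For each $k \ge 0$ I set
\[
\phi_k(g) := \sum_{w\in X^k}\pi(g|_w), \qquad \phi := \sum_{k=0}^{m}\phi_k.
\]
That each $\phi_k$ (and hence $\phi$) is a homomorphism follows from the identity $(gh)|_w = g|_{h(w)}\,h|_w$ together with the fact that $h$ acts on $X^k$ as a permutation: re-indexing the sum by $w\mapsto h(w)$ gives $\sum_{w}\pi(g|_{h(w)})=\phi_k(g)$, whence $\phi_k(gh)=\phi_k(g)+\phi_k(h)$.

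It remains to verify $\phi(a)=\phi(b_i)=1$. For $a=(1,1)\sigma$ every positive-level section is trivial, so only the $k=0$ term contributes and $\phi(a)=\pi(a)=1$. For $b_i$, iterating the wreath recursions $b_j=(1,b_{j+1})$ (for $j<m$) and $b_m=(a,b_1)$ shows that at any level $k \ge 1$ the sections of $b_i$ off the spine are all the identity; the spine section $b_i|_{1^k}$ is again of $b$-type and thus contributes $0$ to $\pi$; and the single "off-spine" section $b_i|_{1^{k-1}0}$ equals $a$ precisely when the spine iteration $b_i, b_{i+1},\dots$ first reaches $b_m$, which happens for the unique value $k=m+1-i\in\{1,\dots,m\}$. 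Hence $\phi_k(b_i)=1$ for exactly one $k$ and vanishes for all others, giving $\phi(b_i)=0+1=1$. The only real bookkeeping is the cyclic tracking of when an $a$-section appears along the spine, which is a routine consequence of the recursion $b_m=(a,b_1)$; once this is done, bipartiteness of $\Cay(G_m,T)$ is immediate.
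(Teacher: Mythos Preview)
Your proof is correct. The homomorphisms $\phi_k$ are well-defined by the standard section formula, and the bookkeeping showing $\phi_k(b_i)=\delta_{k,\,m+1-i}$ for $k\in\{1,\dots,m\}$ (together with $\phi_0(a)=1$) is accurate; in particular every off-spine section of $b_i$ at depth $k$ other than $b_i|_{1^{k-1}0}$ is trivial, since an $a$ produced at an earlier depth has trivial sections thereafter.

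The paper's argument is at bottom the same observation, phrased differently. It works with words and shows that for any relation $w$ one has $\sum_{u\in X^k}|w_u|_a = |w|_{b_{m-k+1}}$, concluding that each $|w|_t$ is individually even. Your $\phi_k$ is exactly the mod-$2$ reduction of this identity, packaged as a group homomorphism rather than a word-count equation. In fact your proof implicitly recovers the stronger statement the paper proves: since $\phi_0,\dots,\phi_m$ send the generators $a,b_m,\dots,b_1$ to the standard basis of $(\Z/2\Z)^{m+1}$, every relation must use each generator an even number of times. Your formulation is cleaner and makes the bipartition explicit, while the paper's version is more hands-on; the mathematical content is identical.
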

\begin{proof}
We only have to show that all relations in the group $G_m$ have even length. Let $w$ be a freely reduced word on $T$, and let $|w|$ represent its length and $|w|_t$ the number of times the generator $t \in T$ occurs in $w$.

Suppose that $w$ represents the identity element of $G_m$. In that case, $|w|_a$ must be even, or otherwise its action on the first level would be nontrivial. This allows us to write the word $w$ as a product of $b_i$ and $b_i^a$. Let $w_0$ and $w_1$ be the two projections of the word $w$ into the first level, before reduction. Let us look at the decomposition of a generator $t \in T$. If $t = a$, then it decomposes as $1$ on both subtrees. If $t = b_i$, then it decomposes as $b_{i+1}$ on the right and as $1$ on the left, or as $a$ if $i = m$. Notice that the decomposition of $b_i^a$ is that of $b_i$ exchanging the two projections.

It is clear that both $w_0$ and $w_1$ represent the identity, too. Hence, $|w_0|_a$ and $|w_1|_a$ must both be even as well. But $|w_0|_a + |w_1|_a = |w|_{b_m}$, so $|w|_{b_m}$ must also be even.

By iterating this argument we can conclude that $|w|$ must be even. In general, let $w_u$ be the projection of $w$ onto the vertex $u$ in $X^k$, the $k$-th level of the tree, for $1 \ge k \ge m$. For any $u \in X^k$, $w_u$ must represent the identity, and hence $|w_u|_a$ must be even. But tracing back the $a$'s occurring in $w_u$ we obtain
\[
	\sum_{u \in L_k} |w_u|_a = \sum_{u \in L_{k-1}} |w_u|_{b_m} = \dots = \sum_{u \in L_1} |w_u|_{b_{m-k+2}} = |w|_{b_{m-k+1}}.
\]
This shows that $|w|_t$ is even, for every $t \in T$, which implies that $|w|$ is indeed even and hence that the Cayley graph with the generating set $T$ is bipartite.
\end{proof}

The spectrum of a bipartite graph is symmetric about $0$. At the same time, for amenable groups, the spectrum on any Schreier graph is contained in the spectrum on the Cayley graph. Hence we have,
\[
\spec(G_m^T) \supset -\spec(M_\xi^T) \cup \spec(M_\xi^T) = \left[-1, \frac{3-m}{m+1}\right] \cup \left[\frac{m-3}{m+1}, 1\right].
\]

Two cases are of special interest: $m = 2$ and $m = 3$. In these cases, the union of the two intervals above is the whole interval $[-1, 1]$ and we can thus conclude that the spectrum of the Cayley graphs of $G_2$ and $G_3$ with respect to the minimal \v{S}uni\'c generating set is the whole interval $[-1, 1]$. For $m \ge 4$, the union of intervals is actually disjoint, so we can only conclude that the spectrum of the Cayley graph contains two intervals $[-1, -\beta]$ and $[\beta, 1]$, with $\beta > 0$.

Note that the group $G_2$ was studied in~\cite{Ers04} and is therefore sometimes called the Grigorchuk-Erschler group. It is the only self-similar group in the Grigorchuk family (spinal groups with $d=2$ and $m=2$) besides Grigorchuk's group. The group $G_3$ is known as Grigorchuk's overgroup~\cite{BG00} because it contains Grigorchuk's group as a subgroup. Indeed, the automorphisms $b_2b_3$, $b_1b_3$, $b_1b_2$ are the generators $b$, $c$, $d$ of Grigorchuk's group.

\spectrumgego*

\end{ex}

\bibliography{my.bib}{}
\bibliographystyle{plain}

\end{document}